\theoremstyle{plain}
 \newtheorem{thm}{Theorem}[section]
 \newtheorem{conj}[thm]{Conjecture}
 \newtheorem{cor}[thm]{Corollary}
\newtheorem{lem}[thm]{Lemma}
\newtheorem{prop}[thm]{Proposition}
\newtheorem{ques}[thm]{Question}
\theoremstyle{definition}
\newtheorem{defn}[thm]{Definition}
\newtheorem{rmk}[thm]{Remark}
\newtheorem{ex}[thm]{Example}
\theoremstyle{definition}
\newtheorem{definition}[thm]{Definition}
\newcommand{\FF}{\mathbb{F}}
\newcommand{\Pp}{\mathbb{P}}
\newcommand{\PP}{\mathbb{P}}
\newcommand{\Qq}{\mathbb{Q}}
\newcommand{\QQ}{\mathbb{Q}}
\newcommand{\Rr}{\mathbb{R}}
\newcommand{\RR}{\mathbb{R}}
\newcommand{\Zz}{\mathbb{Z}}
\newcommand{\ZZ}{\mathbb{Z}}
\newcommand{\Nn}{\mathbb{N}}
\newcommand{\NN}{\mathbb{N}}
\newcommand{\bfD}{\mathbf{D}}
\newcommand{\bfDiv}{\mathbf{Div}}
\newcommand{\bfM}{\mathbf{M}}
\newcommand{\Diff}{\operatorname{Diff}}
\newcommand{\Exc}{\operatorname{Exc}}
\newcommand{\Fr}{\operatorname{Fr}}
\newcommand{\glct}{\operatorname{glct}}
\newcommand{\Pic}{\operatorname{Pic}}
\newcommand{\lcm}{\operatorname{lcm}}
\newcommand{\Supp}{\operatorname{Supp}}
\newcommand{\me}{\mathcal{E}}
\newcommand{\mo}{\mathcal{O}}
\newcommand{\fe}{\mathfrak{e}}
\newcommand{\rmb}{\mathrm{b}}
\newcommand{\CDiv}{\mathrm{CDiv}}
\newcommand{\Div}{\mathrm{Div}}
\newcommand{\mult}{\mathrm{mult}}
\newcommand{\Nklt}{\mathrm{Nklt}}
\newcommand{\nonklt}{\mathrm{nonklt}}
\newcommand{\rmv}{\mathrm{v}}
\begin{document}

\title[nonvanishing for generalized polarized pairs]{On numerical nonvanishing for\\ generalized log canonical pairs}
\subjclass[2010]{14E30, 14E35}
\keywords{generalized polarized pair, numerical nonvanishing}
\begin{abstract}
The nonvanishing conjecture for projective log canonical pairs plays a key role in the minimal model program of higher dimensional algebraic geometry. The numerical nonvanishing conjecture considered in this paper is a weaker version of the usual nonvanishing conjecture, but valid in the more general setting of  generalized log canonical pairs. We confirm it in dimension two. Under some necessary conditions we obtain effective versions of numerical nonvanishing for surfaces. Several applications are also discussed.

In higher dimensions, we mainly consider the conjecture for generalized klt pairs $(X, B+\bfM)$, and reduce it to lower dimensions when $K_X+\bfM_X$ is not pseudo-effective. 
Up to scaling the nef part, we prove the numerical nonvanishing for pseudo-effective generalized lc threefolds with rational singularities.
\end{abstract}

\author{Jingjun Han}
\address{Jingjun Han, Department of Mathematics, Johns Hopkins University, Baltimore, MD 21218, USA}
\email{jhan@math.jhu.edu}

\author{Wenfei Liu}
\address{Wenfei Liu, School of Mathematical Sciences, Xiamen University, Siming South Road 422, Xiamen, Fujian 361005, China}
\email{wliu@xmu.edu.cn}

\date{\today}

\maketitle

\tableofcontents

\section{Introduction}\label{sec: introduction}
The notion of generalized polarized pairs was introduced in \cite{BZ16} to deal with the effectivity of Iitaka fibrations. Its prototype already appeared in the treatment of the canonical bundle formulas \cite{K98}.  As it turns out, generalized polarized  pairs are natural objects in many more applications. For example, they are involved in the proofs of the BAB conjecture \cite{Bi19}, and Fujita's spectrum conjecture \cite{HL17}. In the meantime, Osamu Fujino showed that normal quasi-log canonical pairs can be given the structure of a generalized polarized pair \cite{Fu18}.

Some natural questions from the point of view of the minimal model program (MMP), such as the ascending chain condition for log canonical thresholds (\cite{BZ16}) and the canonical bundle formula (\cite{Fi18, HLiu19}), have been addressed in the setting of generalized polarized pairs. In this paper we consider a possible generalization of the following important conjecture:

\begin{conj}[Nonvanishing Conjecture]\label{conj: nonvanishing0}
Let $(X, B)$ be a projective log canonical pair such that $K_X+B$ is pseudo-effective. Then there exists an effective $\RR$-Cartier $\RR$-divisor $D$ such that $K_X+B\sim_{\RR} D$.
\end{conj}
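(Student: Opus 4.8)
The strategy is the classical one linking nonvanishing to the minimal model program and abundance; I organize it so that the genuinely open inputs are isolated, leaving an unconditional argument in low dimension.

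\textbf{Step 1: Reductions.} First I would pass to a $\QQ$-factorial dlt modification $f\colon (X',B')\to (X,B)$. This preserves pseudo-effectivity, and since $K_{X'}+B'=f^*(K_X+B)+E$ with $E\ge 0$ exceptional, it suffices to produce an effective $\RR$-divisor $\RR$-linearly equivalent to $K_{X'}+B'$. Next, writing $B$ as a positive combination of $\QQ$-boundaries whose log canonical classes generate a rational polytope, and using that the set of $\RR$-divisor classes admitting an effective representative is a closed convex cone, I reduce to the case where $B$ is a $\QQ$-divisor (the relevant polytope has rational extreme points). Finally, if $K_X+B$ is big then $K_X+B\sim_{\RR}A+E$ with $A$ ample and $E\ge 0$, and $A$ is itself $\RR$-linearly equivalent to an effective divisor, so we are done; hence I may assume $0\le \nu(K_X+B)<\dim X$.

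\textbf{Step 2: Running an MMP to a nef model.} Run a $(K_X+B)$-MMP with scaling of an ample divisor. For log canonical (possibly non-klt) pairs the cone and contraction theorems needed to run this MMP are available through the work of Ambro and Fujino. Since $K_X+B$ is pseudo-effective, this MMP is expected to terminate with a minimal model $(X_{\min},B_{\min})$ on which $K_{X_{\min}}+B_{\min}$ is nef; each step replaces the log canonical class by one sharing the property of admitting (or not) an effective $\RR$-linear representative, so it is enough to treat the nef case.

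\textbf{Step 3: The nef case.} Assume $K_X+B$ is nef with $0\le\nu(K_X+B)<\dim X$. If $\nu\ge 1$, I would carry out Shokurov's non-vanishing argument: choose a very ample $H$ and a large $m$, produce a general divisor with high multiplicity at a point of a component of the stable base locus, and use Kawamata--Viehweg vanishing on a log resolution to lift a section of $\lfloor m(K_X+B)\rfloor+H$ not vanishing identically along that component; a tie-breaking/creating-klt-centers argument together with induction on $\nu$ then forces $\kappa(K_X+B)\ge 0$, which gives the desired $D\ge 0$. If $\nu=0$, i.e. $K_X+B\equiv 0$, I would invoke abundance-type results in numerical dimension zero: for klt pairs a numerically trivial log canonical class is $\RR$-linearly trivial, and for lc pairs one restricts to the non-klt locus, applies adjunction and the canonical bundle formula, and concludes by induction on the dimension.

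\textbf{Main obstacle.} The two serious gaps are (i) termination of the $(K_X+B)$-MMP for pseudo-effective pairs, and (ii) the numerical-dimension-zero case, which in full generality amounts to the implication $\kappa_\sigma=0\Rightarrow\kappa\ge 0$; both are open for $\dim X\ge 4$, so the plan above reduces Conjecture~\ref{conj: nonvanishing0} to abundance-type statements rather than settling it outright. In dimension $\le 3$, however, both inputs are theorems (termination of threefold flips and abundance for threefolds), and the argument becomes unconditional there; this is essentially why the paper can only prove the weaker \emph{numerical} nonvanishing unconditionally in higher dimensions.
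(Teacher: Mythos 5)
This statement is Conjecture~\ref{conj: nonvanishing0}, which is an open problem. The paper does not prove it — it is stated as background motivation, and the paper's actual contributions concern the strictly weaker \emph{numerical} nonvanishing (Conjecture~\ref{conj: nonvanishing}) in the generalized setting. So there is no ``paper's own proof'' against which to compare. Your proposal is, correctly, not a proof but a reduction-to-known-conjectures outline, and you say so yourself; I record what is and is not accurate in it.

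What you get right: reducing to a $\QQ$-factorial dlt modification, reducing to $\QQ$-coefficients via a Shokurov-type rational polytope, dispatching the big case, and flagging termination of the $(K_X+B)$-MMP and the numerical-dimension-zero case as the fundamental missing inputs — these are indeed the recognized obstructions, and your dimension-$\le 3$ remark is correct (termination and log abundance for threefolds make the argument close there). One small caveat in Step 1: the set of classes $\RR$-linearly equivalent to an effective divisor is a convex cone but need not be closed, so the reduction to $\QQ$-coefficients should be justified via the rationality of the pseudo-effective/nef polytope of log canonical divisors (as in Proposition~\ref{prop: RtoQ} of this paper for the numerical version), not via closedness of the effective cone.

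The more substantive issue is Step 3 for $1\le\nu(K_X+B)<\dim X$. You suggest that Shokurov's nonvanishing argument together with a tie-breaking / induction-on-$\nu$ scheme ``then forces $\kappa\ge 0$'', leaving only $\nu=0$ as an honest gap. This is too optimistic: for a nef, non-big $K_X+B$ the Shokurov/Kawamata nonvanishing technique does not apply directly, and the known reductions of nonvanishing to the $\nu=0$ case (Gongyo, Lazi\'c--Peternell, etc.)\ themselves rest on additional hypotheses (existence of good minimal models in lower dimensions, or assumptions on the irregularity, etc.). So the intermediate numerical dimensions are also open, not merely $\nu=0$. With that correction your summary of the state of the art is accurate, and the proposal should be read as a map of the problem rather than as a solution.
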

The upshot (and difficulty) of Conjecture~\ref{conj: nonvanishing0} is that pseudo-effectivity is a numerical condition on the log canonical divisor $K_X+B$, while the conclusion is about, roughly speaking, the existence of global sections. Conjecture~\ref{conj: nonvanishing0} is one of the key conjectures in the minimal model program.

Easy examples indicate that one can only hope for a weaker version of the above conjecture in the setting of generalized polarized pairs. Basically, \emph{$\RR$-linear equivalence} should be replaced with the weaker \emph{numerical equivalence}. Note that numerical nonvanishing for pseudo-effective generalized log canonical pairs is enough to imply the existence of weak Zariski decomposition, which in turn guarantees the existence of minimal models (\cite{HM18,HL18}).

More precisely, we will focus on the following statement, which is an adjusted form of \cite[Question~3.5]{BH14}:
\begin{conj}[Numerical Nonvanishing for Generalized Polarized Pairs]\label{conj: nonvanishing}
Let $(X,B+\bfM)$ be a projective generalized log canonical pair. Suppose that
\begin{enumerate}
\item[(i)]   $K_X+B+\bfM_X$ is pseudo-effective,
\item[(ii)] $\bfM=\sum_j \mu_j \bfM_j$ where $\mu_j\in \Rr_{> 0}$ and $\bfM_j$ are nef Cartier $\rmb$-divisors.
\end{enumerate}
Then there exists an effective $\RR$-Cartier $\RR$-divisor $D$ such that $K_X+B+\bfM_X \equiv D$.
\end{conj}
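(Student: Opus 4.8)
The plan is to prove the conjecture in dimension two (it remains open in higher dimensions, where I only indicate the expected reduction). Throughout, write $N:=K_X+B+\bfM_X$.

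\textbf{Step 1 (reduction to a nef model).} Since $N$ is pseudo-effective, I would run a $(K_X+B+\bfM_X)$-MMP — available for generalized log canonical surfaces, after a $\QQ$-factorialization if needed — to reach a model $(X',B'+\bfM)$ with $N':=K_{X'}+B'+\bfM_{X'}$ nef. On a common resolution $p\colon W\to X$, $q\colon W\to X'$ one has $p^{*}N=q^{*}N'+E$ with $E\ge0$ and $q$-exceptional, so an effective $\RR$-divisor $D'\equiv N'$ on $X'$ gives $N\equiv p_{*}(q^{*}D'+E)\ge0$. Hence we may assume $N$ itself is nef.

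\textbf{Step 2 (trichotomy by numerical dimension).} If $\nu(N)=0$ then $N\equiv0$. If $\nu(N)=2$ then $N^{2}>0$, so $N$ is big, hence numerically equivalent to an ample plus an effective $\RR$-divisor, which is numerically effective. So assume $\nu(N)=1$, i.e.\ $N^{2}=0$ and $N\not\equiv0$; then from $B\ge0$ and $\bfM_X$ nef we get $K_X\cdot N=-(B+\bfM_X)\cdot N\le0$.

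\textbf{Step 3 (the case $\nu(N)=1$).} If $K_X$ is pseudo-effective then also $K_X\cdot N\ge0$, so $K_X\cdot N=0$; now $K_X$ is $\QQ$-Cartier (as $(X,0)$ is log canonical), and by log abundance for log canonical surfaces $K_X\sim_{\QQ}\Gamma$ with $\Gamma\ge0$, while by (ii) each $\bfM_{j,X}$ is a nef $\QQ$-Cartier divisor, and a nef $\QQ$-Cartier divisor on a normal projective surface is numerically equivalent to an effective one (pass to a resolution and apply Riemann--Roch together with the Enriques--Kodaira classification); writing $\bfM_{j,X}\equiv D_{j}\ge0$ we get $N\equiv\Gamma+B+\sum_{j}\mu_{j}D_{j}\ge0$. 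If instead $K_X$ is not pseudo-effective, then $X$ is uniruled: passing to a resolution $Y$, which carries a conic bundle $\tilde f\colon Y\to C$ (or $Y\cong\Pp^{2}$, handled directly via $\rho=1$), one intersects $\bar N:=p^{*}N$ with a general fibre $F\cong\Pp^{1}$; if $\bar N\cdot F=0$ then $\bar N$ is a pullback $\tilde f^{*}N_{C}$ with $\deg N_{C}\ge0$, hence numerically a nonnegative multiple of $F$, and if $\bar N\cdot F>0$ one reduces, after a relative MMP over $C$, to numerical nonvanishing over the curve $C$ — a triviality once one uses the freedom in the choice of divisorial representative of a numerical class (this is where the ``Atiyah-type'' $\Pp(\mathcal{E})$'s over elliptic curves are dispatched).

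\textbf{Expected main obstacle, and higher dimensions.} The technical heart in dimension two is the uniruled, $\nu(N)=1$ case — reducing it to a Mori fibre space over $\Pp^{1}$ or a higher-genus curve and to nonvanishing over the base — together with the use of hypothesis (ii) in the pseudo-effective case, which is precisely what forbids $\bfM_X$ from being an ``irrational'' nef class (as occurs on, say, a simple abelian surface) for which numerical nonvanishing genuinely fails. In higher dimensions I expect the same dichotomy: when $K_X+B+\bfM_X$ is not pseudo-effective one runs the MMP and restricts to the fibres of a Mori fibre space, reducing to numerical nonvanishing in lower dimension; but when it is pseudo-effective the statement needed is essentially a nonvanishing/abundance result for nef generalized log divisors, which is open — this is why in dimension three one can treat only pairs with rational singularities, and only after scaling the nef part.
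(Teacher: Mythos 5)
There is a genuine gap in Step~3, and it is fatal to the argument. You split $N=K_X+B+\bfM_X$ into its summands and try to prove that each piece is separately num\nobreakdash-effective; the key input is the parenthetical assertion that \emph{any} nef $\QQ$-Cartier divisor on a normal projective surface is numerically equivalent to an effective $\RR$-divisor, justified only by ``Riemann--Roch together with the Enriques--Kodaira classification.'' That assertion is essentially a special case of the very conjecture under proof (take $B=0$ and $K_X$ numerically trivial, so $K_X+\bfM_X\equiv\bfM_X$ is a general nef $\QQ$-Cartier divisor), so the argument is circular. Concretely, Riemann--Roch gives $h^0>0$ only when $\chi(\mathcal{O}_X)>0$ or $L\cdot K_X<0$; when $\chi(\mathcal{O}_X)\le 0$ and $L$ is orthogonal to $K_X$ with $L^2=0$ — precisely the case you land in — one needs Bauer's theorem for abelian surfaces, the structure of boundary rays of the nef cone on elliptic ruled surfaces, and a delicate numerical-semigroup analysis for elliptic quasi-bundles. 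The paper's Proposition~\ref{prop: ell num eff} and the discussion around it (in particular the observation that $2(K_X+M)$ can fail to be numerically effective for a nef Cartier $M$ on a quasi-bundle) show that this is nowhere near a classification formality.

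Beyond this, your decomposition strategy is fundamentally different from the paper's and does not salvage the hard case: the paper never attempts to show the individual summands $K_X$, $B$, $\bfM_{j,X}$ are num-effective. After reducing to $\QQ$-coefficients via Proposition~\ref{prop: RtoQ} and using the Hodge index theorem to force $K_X+B$ numerically proportional to $\bfM_X$, it works with the whole adjoint divisor through the pseudo-effective threshold $t_0=\inf\{t\ge 0 : K_X+tB+\bfM_X\text{ pseudo-effective}\}$ and handles the two cases $t_0=0$ (reduced to a minimal ruled surface over an elliptic curve, where $-K_X$ is num-effective) and $t_0>0$ (via a Mori fibre space, Lemma~3.18 of \cite{HL18}, and the canonical bundle formula over the one-dimensional base). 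Your treatment of the uniruled/$\nu(N)=1$ subcase — ``reduces, after a relative MMP over $C$, to numerical nonvanishing over the curve $C$, a triviality once one uses the freedom in the choice of divisorial representative'' — is not an argument: without the threshold and the Global ACC input there is no control on the base class, and the relevant reduction is precisely the content of Case~2 of the paper's proof, which you have not reproduced.
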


The condition (i) of Conjecture~\ref{conj: nonvanishing} is of course necessary for the numerical nonvanishing; the following example shows that the condition (ii) cannot be ommited either:
\begin{ex}\label{rem: reasonNQC} 
Let $E$ be a general elliptic curve, and set $X=E\times E$. Consider the curves 
\[
F_1=\{P\}\times E ,F_2=E\times\{P\}, \Delta,
\]
where $P\in E$ is a fixed point and $\Delta\subset E\times E$ is the diagonal. According to \cite[Lemma 1.5.4]{L04}, $M=F_1+\sqrt{2}F_2+(\sqrt{2}-2)\Delta$ is nef, and it is not hard to show that for any $\epsilon>0$, there exists a curve $C$ satisfying $0<M\cdot C<\epsilon$; see \cite{vDdB18}. Suppose on the contrary that there exists an effective $\Rr$-Cartier $\Rr$-divisor $D$ such that $K_X+M\equiv D$. Then there exists a positive real number $\alpha>0$ such that $M\cdot C=D\cdot C\ge \alpha$ for any curve $C\nsubseteq\Supp D$, which is a contradiction.\end{ex}


\begin{rmk}
For projective generalized lc pairs $(X, B+\bfM)$ such that $K_X+B$ is $\RR$-Cartier and $\bfM_X$ is nef, Conjecture~\ref{conj: nonvanishing} relaxes the assumptions of the Generalized Nonvanishing Conjecture of \cite{LP18a} in the following aspects: (1) $K_X+B$ is not assumed to be pseudo-effective; (2) the singularities of $(X, B)$ are allowed to be lc but not klt; (3) the coefficients of $B$ and $\bfM_X$ are allowed to be real but not rational. Note that the assumptions of  \cite{LP18a} are necessary for the numerical abundance; see \cite[Section~6.1]{LP18a}. We give instead certain characterization for the failure of the numerical abundance under the (more general) conditions of Conjecture~\ref{conj: nonvanishing} in dimension two; see Section~\ref{sec: num abundance}.

The Generalized Nonvanishing Conjecture of  \cite{LP18a} holds if
\begin{itemize}
\item[(a)] the dimension of $X$ is two (\cite[Theorem~C]{LP18a}), or
\item[(b)] the dimension of $X$ is three and either the irregularity of the threefold or the numerical dimension of $K_X+B$ is positive (\cite[Corollary~D]{LP18a} and \cite[Theorem~8.1]{LP18b}). 
\end{itemize} 
Indeed, case (b) is a consequence of stronger conditional results that are valid in all dimensions; see \cite[Theorem~A]{LP18a} and \cite[Theorem~8.1]{LP18b}. Conjecture~\ref{conj: nonvanishing} is known to hold if the pair $(X, B)$ is klt with $\QQ$-coefficients and if $X$ admits a morphism $X\to Z$ to an abelian variety such that $K_X+B+M$ is big over $Z$ (\cite[Theorem~4.1]{BC15}). 
\end{rmk}

Our first main result is a confirmation of Conjecture~\ref{conj: nonvanishing} in dimension two.
\begin{thm}[= Theorem~\ref{thm: nv}]\label{thm: nonvanishing surface q}
Conjecture~\ref{conj: nonvanishing} holds true in dimension two.
\end{thm}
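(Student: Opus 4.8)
The plan is to reduce, by the minimal model program, to the case where $L:=K_X+B+\bfM_X$ is nef, to dispose of the cases $L\equiv 0$ and $L^2>0$ at once, and then to treat the case $L^2=0$, $L\not\equiv 0$ by a Hodge-index argument together with the classification of surfaces, using hypothesis (ii) only where it is genuinely needed. First I would pass to a $\QQ$-factorial dlt modification of $(X,B+\bfM)$ on which every $\bfM_j$ descends, so that $\bfM_X=\sum_j\mu_j M_j$ with the $M_j$ nef Cartier divisors on $X$; since the log canonical divisor pulls back, it suffices to prove the statement on this model. Running the $(K_X+B+\bfM_X)$-MMP for generalized log canonical surfaces, which reaches a minimal model because $L$ is pseudo-effective, one arrives at a model on which $L$ is nef; as in dimension two this MMP is a composition of $(K_X+B+\bfM_X)$-negative contraction morphisms, so on the original model $K_X+B+\bfM_X$ equals the pullback of its nef model plus an effective exceptional divisor, and we may assume $L$ is nef. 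If $L\equiv 0$ take $D=0$, and if $L^2>0$ then $L$ is big and nef, hence $L\sim_{\RR}D$ with $D\geq 0$. So the real content is the case $L^2=0$, $L\not\equiv 0$.

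Suppose first that $K_X+B$ is pseudo-effective. Since $L$ is nef we have $L\cdot(K_X+B)\geq 0$ and $L\cdot\bfM_X\geq 0$, and these sum to $L^2=0$, so both vanish; from $L\cdot\bfM_X=0$ and the nefness of $\bfM_X$ the Hodge index theorem forces $\bfM_X\equiv cL$ for some $c\geq 0$, whence $K_X+B\equiv(1-c)L$, necessarily with $c\leq 1$ because $K_X+B$ is pseudo-effective and $L\not\equiv 0$. If $c<1$, then by numerical nonvanishing for log canonical surfaces --- Conjecture~\ref{conj: nonvanishing0} in dimension two, a consequence of the MMP and abundance for surfaces --- we have $K_X+B\sim_{\RR}\Delta$ with $\Delta\geq 0$, hence $L\equiv\frac1{1-c}\Delta\geq 0$. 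This leaves the crucial case $c=1$, that is, $K_X+B\equiv 0$ and $L\equiv\bfM_X$ with $\bfM_X^2=0$, $\bfM_X\not\equiv 0$.

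This last case is the main obstacle, and it is exactly here that hypothesis (ii) is indispensable: Example~\ref{rem: reasonNQC} shows that a nef $\RR$-divisor of numerical dimension one on a surface with $K_X+B\equiv 0$ need not be numerically equivalent to an effective divisor, and the NQC structure is what rules this out. Here $\bfM_X^2=0$ forces $M_j\cdot M_k=0$ for all $j,k$, in particular $M_j^2=0$, and $-K_X\equiv B\geq 0$ gives $M_j\cdot K_X=-M_j\cdot B\leq 0$; so it suffices to prove that a nef Cartier divisor $N$ on $X$ with $N^2=0$ and $N\cdot K_X\leq 0$ is numerically equivalent to an effective divisor, for then $\bfM_X\equiv\sum_j\mu_j E_j\geq 0$. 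Passing to the minimal resolution, which has rational singularities so that $\chi$, Riemann--Roch and $h^0(mN)$ are unaffected: if $N\cdot K_X<0$, then $\chi(mN)=\chi(\Oo_X)-\tfrac m2\,N\cdot K_X\to\infty$ while $h^2(mN)=h^0(K_X-mN)=0$ for $m\gg 0$, so $h^0(mN)>0$. If $N\cdot K_X=0$, I would argue by the Kodaira type of $X$ (which in this case is necessarily $-\infty$ or $0$): when $X$ is rational or of $K3$ or Enriques type ($\chi(\Oo_X)>0$) the same estimate gives $h^0(mN)\geq\chi(\Oo_X)>0$ for $m\gg 0$; when $X$ is ruled, the pseudo-effective cone is generated by finitely many classes of effective curves (fibre components and minimal sections), so $N$ is numerically equivalent to an effective divisor outright; and when $X$ is of abelian or bielliptic type, an integral nef class of square $0$ is a positive multiple of a fibre class of one of the elliptic fibrations on $X$. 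In every case $[N]$ is effective.

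Finally suppose $K_X+B$ is not pseudo-effective. Running the $(K_X+B)$-MMP reaches a Mori fibre space $\pi\colon Y\to Z$ with $\dim Z\leq 1$; by the projection formula $\bfM$ pushes forward to a nef divisor, and, pseudo-effectivity being preserved by these push-forwards, $K_X+B+\bfM_X$ pushes forward to the pseudo-effective divisor $K_Y+B_Y+\bfM_Y$. If $\dim Z=0$ then $\rho(Y)=1$ and every pseudo-effective class is a nonnegative multiple of an effective curve; if $\dim Z=1$ then $Y\to Z$ is a possibly singular conic bundle and, as above, the pseudo-effective cone of $Y$ is generated by finitely many classes of effective curves. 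Hence $K_Y+B_Y+\bfM_Y$ is numerically equivalent to an effective $\RR$-divisor, and this is inherited by $X$. The points that require care but are routine in dimension two are the bookkeeping of $\bfM$ and of exceptional corrections through the two minimal model programs and the abelian/bielliptic sub-cases of the third step; the genuine difficulty is the case $K_X+B\equiv 0$, where, as noted, the NQC hypothesis is essential.
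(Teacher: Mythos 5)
Your overall strategy — reduce to the nef case by MMP, apply Riemann--Roch plus the Hodge index theorem to isolate the degenerate case $L^2=0$, and then invoke surface classification — is exactly the skeleton of the paper's proof. The paper, however, organises the degenerate case differently: after reducing to $\QQ$-coefficients via Proposition~\ref{prop: RtoQ} and deducing $\chi(\mathcal{O}_X)\le 0$ (which by Lemma~\ref{lem: rational} gives rational singularities, justifying the passage to the minimal resolution that you simply assert), it scales the \emph{boundary} $B$ by the pseudo-effective threshold $t_0$. This either reduces to $-K_X$ num-effective on a minimal ruled surface over an elliptic curve ($t_0=0$), or to a Mori fibre space where the general fibre forces $t_0\in\QQ$ and the conclusion follows. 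You instead split on whether $K_X+B$ is pseudo-effective and, in the crucial case $K_X+B\equiv 0$, analyse each nef Cartier component $M_j$ separately.

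The genuine gap in your argument lies in the classification step. For $X$ ruled you assert that ``the pseudo-effective cone is generated by finitely many classes of effective curves (fibre components and minimal sections), so $N$ is numerically equivalent to an effective divisor outright,'' and you make a cognate claim for the conic bundles $Y\to Z$ arising in the non--pseudo-effective $(K_X+B)$ branch. This is false as stated for $\mathbb{P}^1$-bundles over an elliptic curve with invariant $e=-1$: there $\overline{NE}(X)=\langle [f],[-K_X]\rangle$, and while $[f]$ is the fibre class, the ray $[-K_X]$ has no obvious effective generator and no section of negative self-intersection exists. That $-2K_X$ is in fact linearly equivalent to an effective divisor on such $X$ requires a separate argument (this is precisely Shokurov's \cite[Corollary 2.2]{Sh00}, used in Lemma~\ref{lem: elliptic ruled surface} and in Case~1 of the paper's proof via \cite[Example 1.1]{Sh00}). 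Without this input your treatment of the ruled case, and hence of the crucial case $K_X+B\equiv 0$, is incomplete. A similar issue affects the abelian subcase, where ``a nef integral class of square zero is a multiple of a fibre class'' is true but nontrivial (the paper cites \cite[Lemma 1.1]{Ba98}). Finally, a minor but worth-noting point: after running the $(K_X+B+\bfM_X)$-MMP the pushforwards $M_j$ need no longer be Cartier (only $\QQ$-Cartier on the $\QQ$-factorial model), so the integrality you use in the classification step must be restored by clearing denominators, which is why the paper first passes to $\QQ$-coefficients and then to a fixed Cartier multiple of the full generalized log canonical divisor.
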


It can be used to prove the following two corollaries. 
\begin{cor}\label{cor: antinef}
	Let $(X,B)$ be a projective log canonical surface such that $-(K_X+B)$ is nef. Then there exists an effective $\Rr$-Cartier $\RR$-divisor $D$ such that $-(K_X+B)\sim_{\RR} D$. 
\end{cor}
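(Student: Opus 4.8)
The plan is to deduce Corollary~\ref{cor: antinef} from Theorem~\ref{thm: nonvanishing surface q} by realizing $-(K_X+B)$ as the generalized log canonical divisor of a suitable generalized lc pair, and then to upgrade numerical equivalence to $\RR$-linear equivalence by hand. Write $P:=-(K_X+B)$; by hypothesis $P$ is nef, hence pseudo-effective, and $K_X+B$ is $\RR$-Cartier since $(X,B)$ is lc. Let $\bfM$ be the nef $\rmb$-divisor that descends to $X$ with trace $\bfM_X=2P=-2(K_X+B)$. Then $K_X+B+\bfM_X=-(K_X+B)=P$ is pseudo-effective, and $(X,B+\bfM)$ is a projective generalized lc pair, because the generalized lc condition for $(X,B+\bfM)$ is equivalent to the lc condition for $(X,B)$ (the $\rmb$-part already descends on $X$). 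So hypothesis (i) of Conjecture~\ref{conj: nonvanishing} holds. To invoke Theorem~\ref{thm: nonvanishing surface q} I must verify hypothesis (ii), namely that $P=-(K_X+B)$ is a nonnegative $\RR$-linear combination of nef Cartier divisors on $X$; this transfers to $\bfM$. For this I would approximate: write $B=\sum_k t_k B_k$ as a finite convex combination of $\QQ$-boundaries with $(X,B_k)$ lc and $-(K_X+B_k)$ still nef, using that the locus $\{B'=\sum_i b_i'B_i : (X,B') \text{ lc},\ -(K_X+B') \text{ nef}\}$ is a rational polytope near $B$ — the lc condition is rational polyhedral, and by the cone theorem for lc surfaces only finitely many $(K_X+B')$-negative extremal rays, cut out by rational inequalities, are relevant near $B$. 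Then $P=\sum_k t_k\bigl(-(K_X+B_k)\bigr)$ displays $P$ as a nonnegative combination of nef $\QQ$-Cartier divisors, each a positive rational multiple of a nef Cartier divisor. (This is where the proof genuinely uses that $P$ is anti-lc, and where Example~\ref{rem: reasonNQC} shows some hypothesis cannot be dropped.)

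Granting (i) and (ii), Theorem~\ref{thm: nonvanishing surface q} provides an effective $\RR$-Cartier $\RR$-divisor $D_0$ with $P\equiv D_0$, and it remains to promote $\equiv$ to $\sim_{\RR}$. I would argue by cases on the numerical dimension $\nu(P)$. If $\nu(P)=2$, then $P$ is big and nef, hence $P\sim_{\RR}A+E$ with $A$ ample and $E\ge 0$; an ample $\RR$-Cartier divisor is a positive combination of ample Cartier divisors and therefore $\RR$-linearly equivalent to an effective divisor, so $P\sim_{\RR}$ an effective divisor. If $\nu(P)=0$, then $K_X+B\equiv 0$, and log abundance for projective lc surfaces gives $K_X+B\sim_{\RR}0$, so $P\sim_{\RR}0$. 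If $\nu(P)=1$, note first that $D_0$ is itself nef: its Zariski negative part $N$ satisfies $0\le P\cdot N=D_0\cdot N=N^2\le 0$, forcing $N=0$. Combining $K_X\cdot P=(-B-P)\cdot P=-B\cdot P\le 0$ with the fact that $P$ is numerically equivalent to an effective divisor, a Riemann--Roch and base-point-freeness argument (again passing to the rational boundaries $B_k$) shows $P$ is semiample, inducing a fibration $X\to C$ onto a curve with $P$ $\RR$-linearly equivalent to the pullback of an effective divisor on $C$; hence $P\sim_{\RR}$ an effective divisor.

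The main obstacle is the case $\nu(P)=1$ in the presence of possibly irrational coefficients in $B$: obtaining an honest effective divisor rather than a numerical class amounts to proving the semiampleness of a nef anti-log-canonical $\RR$-divisor of numerical dimension one on a surface, and it is there that the reduction to $\QQ$-coefficients and the abundance theory for surfaces carry the load. The other delicate point is the verification of hypothesis (ii) of Theorem~\ref{thm: nonvanishing surface q} — the statement that $-(K_X+B)$ is a nonnegative combination of nef Cartier divisors — which fails for arbitrary nef divisors by Example~\ref{rem: reasonNQC} and so must genuinely exploit the log canonical structure.
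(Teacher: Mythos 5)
Your setup (taking $\bfM_X=-2(K_X+B)$, reducing to $\QQ$-coefficients via a polytope argument, applying Theorem~\ref{thm: nonvanishing surface q}, and then upgrading $\equiv$ to $\sim_{\RR}$ by casing on the numerical dimension) matches the paper's opening moves, and the cases $\nu(P)=0$ and $\nu(P)=2$ are handled correctly. The gap is in the case $\nu(P)=1$. You assert that ``a Riemann--Roch and base-point-freeness argument \dots shows $P$ is semiample, inducing a fibration $X\to C$.'' This is false in general: $P=-(K_X+B)$ need not be semiample when $\nu(P)=1$. Consider a $\PP^1$-bundle $X=\PP(\mo_C\oplus\mo_C(\fe))$ over an elliptic curve $C$ with $\fe\in\Pic^0(C)$ non-torsion, and take $B=0$. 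Then $-K_X$ is nef of numerical dimension $1$, but $h^0(-mK_X)=1$ for every $m\ge 1$, i.e., $|{-mK_X}|=\{m(C_0+C_0')\}$ has a fixed part and $-K_X$ is not semiample. No fibration to a curve realizes $P$ as a pullback. Base-point-freeness theorems require bigness (or nef supporting divisors that are big), which is unavailable at $\nu(P)=1$, so the mechanism you invoke does not produce one.

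This is exactly where the paper's proof does substantial work: it passes to the minimal resolution, uses the Albanese $\PP^1$-fibration $f\colon X\to C$, shows any horizontal curve $H$ with $H^2\le 0$ is elliptic with $\mult_H B=1$, either contracts a negative section to kill $h^1$ or reduces to a relatively minimal elliptic ruled surface, and then concludes by the explicit description of the extremal ray containing $[-K_X]$ and of the curves on it (Lemma~\ref{lem: elliptic ruled surface}). In the non-torsion subcase one writes $-(K_X+B)\sim_\QQ (1-a)C_0+(1-a')C_0'$ directly as an effective but non-semiample divisor. Your appeal to ``abundance theory for surfaces'' does not apply either: abundance concerns nef log canonical divisors $K_X+B$, not antinef ones, and there is no analogous semiampleness for $-(K_X+B)$. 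So the reduction to $\QQ$-coefficients is fine, the invocation of Theorem~\ref{thm: nonvanishing surface q} is fine, but the promotion to $\RR$-linear equivalence in numerical dimension one is a genuine missing argument, not a routine base-point-freeness deduction.
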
 
\begin{rmk}
In an earlier version of the paper, we deduced the numerical equivalence of $-(K_X+B)$ to an effective $\RR$-Cartier $\RR$-divisor as an easy consequence of Theorem~\ref{thm: nonvanishing surface q}. The $\RR$-linear equivalence as in Corollary~\ref{cor: antinef} was pointed out by the referee; it has been stated in \cite[Remark-Corollary 2.6]{Sh00}, but we could not find any proof thereof in the literature. The full proof given in Section 3 of our paper is indeed quite involved.
\end{rmk}

Recall that an $\RR$-Cartier $\RR$-divisor $L$ on a projective variety $X$ is \emph{strictly nef} if $L\cdot C>0$ for every curve $C$ on $X$. 
\begin{cor}\label{cor:serranosurface}
Let $(X,B)$ be a projective log canonical surface. Suppose $L$ is a strictly nef Cartier divisor on $X$. Then $K_X+B+tL$ is ample for every real number $t>3$. 
\end{cor}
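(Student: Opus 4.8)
The plan is to verify the Nakai--Moishezon criterion for the $\RR$-Cartier $\RR$-divisor $D_t := K_X + B + tL$ on the surface $X$ when $t>3$: it suffices to prove that $D_t\cdot C>0$ for every irreducible curve $C\subseteq X$ and that $D_t^2>0$, and then ampleness follows.

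The first step is to show that $D_s := K_X+B+sL$ is nef for every real number $s\ge 3$. By the cone theorem for projective log canonical surfaces,
\[
\overline{\NE}(X)=\overline{\NE}(X)_{(K_X+B)\ge 0}+\sum_i \Rr_{\ge 0}[C_i],
\]
where each $C_i$ spans a $(K_X+B)$-negative extremal ray and satisfies $-(K_X+B)\cdot C_i\le 3$. Since $L$ is a strictly nef Cartier divisor, $L\cdot C_i$ is a positive integer, so $D_s\cdot C_i=(K_X+B)\cdot C_i+s(L\cdot C_i)\ge s-3\ge 0$; moreover $D_s$ is nonnegative on $\overline{\NE}(X)_{(K_X+B)\ge 0}$ because $K_X+B$ is so by definition of that subcone and $L$ is nef. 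A linear functional that is nonnegative on two cones is nonnegative on the closure of their sum, so $D_s$ is nef.

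Next, for $t>3$ and any irreducible curve $C$ I would pick $t'$ with $3<t'<t$ and write $D_t=D_{t'}+(t-t')L$; since $D_{t'}$ is nef by the previous paragraph and $L\cdot C\ge 1$, this gives $D_t\cdot C\ge (t-t')(L\cdot C)>0$. In particular $D_t$ is nef, hence pseudo-effective, and not numerically trivial. Now consider the generalized pair $(X,B+t\bfM_0)$, where $\bfM_0$ is the $\rmb$-divisor closure of $L$ (so $(\bfM_0)_Y=f^*L$ for every model $f\colon Y\to X$), which is a nef Cartier $\rmb$-divisor; this pair is projective generalized log canonical with $K_X+B+(t\bfM_0)_X=D_t$, since on any model $K_Y+B_Y=f^*(K_X+B)$ shows its discrepancies coincide with those of $(X,B)$. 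Thus conditions (i) and (ii) of Conjecture~\ref{conj: nonvanishing} hold, and Theorem~\ref{thm: nonvanishing surface q} produces an effective $\RR$-Cartier $\RR$-divisor $D\equiv D_t$. As $D_t\not\equiv 0$ we have $D\ne 0$; writing $D=\sum_j d_j C_j$ with $d_j>0$ and using the strict positivity just proved on each $C_j$, we get $D_t^2=D_t\cdot D=\sum_j d_j(D_t\cdot C_j)>0$.

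Having established $D_t\cdot C>0$ for every curve $C$ and $D_t^2>0$, the Nakai--Moishezon criterion shows that $K_X+B+tL$ is ample, as desired. The essential input is Theorem~\ref{thm: nonvanishing surface q}, which supplies an effective representative in the numerical class of $D_t$ and thereby upgrades ``positive on every curve'' to ``positive self-intersection''; the only other nonformal ingredient is the length bound $3$ in the cone theorem for log canonical surfaces, and I expect that bound --- and its reduction to it for possibly singular, non-$\Qq$-factorial surfaces --- to be the main technical point. Note that the constant is sharp: $X=\Pp^2$, $B=0$, $L=\Oo_{\Pp^2}(1)$ gives $K_X+B+tL=(t-3)L$, which is ample precisely for $t>3$.
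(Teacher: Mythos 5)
Your proof follows essentially the same route as the paper: use the Cone Theorem (with the length bound $3$ for $(K_X+B)$-negative extremal rays on lc surfaces) to get strict nefness of $K_X+B+tL$ for $t>3$, then invoke Theorem~\ref{thm: nonvanishing surface q} applied to the generalized pair $(X,B+t\overline{L})$ to produce an effective numerical representative, deduce $(K_X+B+tL)^2>0$, and conclude via the Campana--Peternell version of Nakai--Moishezon for $\RR$-Cartier $\RR$-divisors. You simply spell out more of the steps that the paper leaves implicit (in particular the verification that $(X,B+t\overline{L})$ is generalized lc and that the effective representative is nonzero), which is sound.
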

In the special case of smooth surfaces with empty boundary, Corollary~\ref{cor:serranosurface} has been proved by Serrano (\cite[Proposition~2.1]{Se95}); see \cite{CCP08} and \cite[Section~6.3]{LP18a} for further development of the smooth case  in higher dimensions.

For generalized log canonical surfaces with bounded Gorenstein indices, we provide an effective version of Theorem~\ref{thm: nonvanishing surface q}.  It is an analogue of the classical statement of Enriques that $|12K_X|\neq \emptyset$ for a smooth projective surface that is not birational to a ruled surface.
\begin{thm}[=Theorem~\ref{thm: eff nv klt}]
Let $(X,B)$ be a projective log canonical surface and $M$ a nef $\Qq$-divisor on $X$ such that $r(K_X+B+M)$ is nef and Cartier for some $r\in \Nn$. Then $Nr(K_X+B+M)$
is numerically equivalent to an effective Cartier divisor for any integer $N\geq 3$.
\end{thm}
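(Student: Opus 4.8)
The plan is to write $L_0:=K_X+B+M$, a nef $\QQ$-divisor, and $L:=rL_0$, a nef Cartier divisor, so that $L^2=r^2L_0^2\in\ZZ_{\ge 0}$, and to argue by cases on the numerical dimension $\nu(L_0)\in\{0,1,2\}$. If $\nu(L_0)=0$ then $L_0\equiv 0$, so $NrL_0\equiv 0$ and the zero divisor works. Assume from now on that $\nu(L_0)\ge 1$. I would first observe that $H^2\bigl(X,\Oo_X(NrL_0)\bigr)=0$ for every $N\ge 3$: since the lc surface $X$ is normal, hence Cohen--Macaulay, Serre duality gives $h^2\bigl(X,\Oo_X(NrL_0)\bigr)=h^0\bigl(X,\Oo_X(K_X-NrL_0)\bigr)$, and using $K_X+B=L_0-M$ one rewrites $K_X-NrL_0=-\bigl((Nr-1)L_0+B+M\bigr)$. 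If an effective Weil divisor $W$ satisfied $W\sim K_X-NrL_0$, then $W+(Nr-1)L_0+B+M\sim 0$; intersecting with an ample divisor and using that $(Nr-1)L_0$ and $M$ are nef (here $Nr-1\ge 2>0$) while $W$ and $B$ are effective, every summand must have zero intersection with that ample class, so $L_0\cdot H=0$ and hence $L_0\equiv 0$, contradicting $\nu(L_0)\ge 1$.

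Next I would dispose of the big case $\nu(L_0)=2$, where $L_0^2>0$ and hence $L_0^2\ge 1/r^2$ because $L^2$ is a positive integer. Riemann--Roch for the Cartier divisor $NrL_0$ on the normal projective surface $X$ gives $\chi\bigl(X,\Oo_X(NrL_0)\bigr)=\chi(X,\Oo_X)+\tfrac12\,NrL_0\cdot(NrL_0-K_X)$, and from $NrL_0-K_X=(Nr-1)L_0+B+M$, together with nefness of $L_0,M$ and effectivity of $B$, we get $NrL_0\cdot(NrL_0-K_X)\ge Nr(Nr-1)L_0^2\ge N(Nr-1)/r\ge N(N-1)\ge 6$ for $N\ge 3$. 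Since $\chi(X,\Oo_X)\ge -1$ (the irregularity of a resolution of $X$ is at most $2$) and $h^1\ge 0$, the vanishing just proved yields $h^0\bigl(X,\Oo_X(NrL_0)\bigr)\ge\chi\bigl(X,\Oo_X(NrL_0)\bigr)\ge 2>0$; thus $NrL_0$ is in fact linearly equivalent to an effective Cartier divisor. This case is essentially formal, and it is precisely the estimate $\chi(\Oo_X)\ge -1$ that makes $N=3$ the right bound.

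It remains to treat $\nu(L_0)=1$. Here I would invoke the abundance theorem for NQC generalized log canonical surface pairs — the required NQC hypothesis holds since $r(K_X+B+M)$ is Cartier — to conclude that $L_0=K_X+B+M$ is semiample. Let $\phi\colon X\to Z$ be the induced contraction; then $\dim Z=\kappa(L_0)=\nu(L_0)=1$, so $Z$ is a smooth projective curve and $L_0\equiv\phi^{*}A_Z$ with $\deg A_Z>0$. Since $rL_0$ is Cartier and numerically trivial on the fibres of $\phi$, the class $[NrL_0]$ is a positive multiple of a fibre class, and one produces an effective Cartier divisor numerically equivalent to $NrL_0$ by pulling back a suitable effective divisor on $Z$ (absorbing any denominators into the multiple fibres of $\phi$); for this case any $N\ge 1$ suffices.

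The main obstacle is this last case, through its reliance on semiampleness of $K_X+B+M$: Theorem~\ref{thm: nonvanishing surface q} only furnishes a numerically equivalent effective $\RR$-divisor, which does not by itself single out an \emph{integral} effective representative of the given numerical class $[NrL_0]$. If one wished to avoid quoting log abundance in dimension two, one would be pushed into a classification of the surfaces on which $L_0\cdot K_X=0$ (equivalently, via the Albanese morphism, the geometry of fibrations over a base curve of positive genus, and of abelian surfaces), and the delicate point there is again to round the $\QQ$-divisor naturally equivalent to $NrL_0$ to an effective Cartier divisor without leaving its numerical class.
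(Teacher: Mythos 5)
Your reduction to $\nu(L_0)\in\{0,1,2\}$ and your $H^2$-vanishing argument are fine, but both of the two nontrivial cases break down.

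\textbf{The big case.} Your Riemann--Roch estimate rests on the claim that $\chi(\mathcal O_X)\ge -1$ because ``the irregularity of a resolution of $X$ is at most $2$.'' This is false: a smooth $\mathbb P^1$-bundle over a curve of genus $g$ is a log canonical surface with $q(X)=g$ and $\chi(\mathcal O_X)=1-g$, and one can easily arrange $K_X+B+M$ big and nef on such a surface (take $(B+M)\cdot F>2$ on the fibres), so $\chi(\mathcal O_X)$ can be arbitrarily negative. Your lower bound on $\chi(NrL_0)$ therefore gives no information. The paper's Lemma~\ref{lem: nv kwmt} handles exactly this situation: when $\chi(\mathcal O_X)<0$ it uses the Albanese $\mathbb P^1$-fibration, Kawamata--Viehweg vanishing, and a Riemann--Roch computation on the boundary $\lfloor B\rfloor$ to extract a section — the Euler characteristic alone is not enough.

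\textbf{The case $\nu(L_0)=1$.} You invoke ``the abundance theorem for NQC generalized log canonical surface pairs.'' No such theorem exists, and in fact (numerical) abundance \emph{fails} for generalized lc surfaces: the paper's Example~\ref{ex: counterexampledltabundance} exhibits a generalized dlt surface $(\tilde X,\tilde C+\tilde M)$ with $K_{\tilde X}+\tilde C+\tilde M$ big and nef but not numerically equivalent to any semiample divisor. Theorem~\ref{thm: surf abund} only gives num-semiampleness for generalized \emph{klt} surfaces outside the exceptional regime $K_X+B\equiv -t\bfM_X$, and here that exceptional regime is precisely the one you cannot avoid. Moreover, even when $L_0$ is genuinely numerically proportional to a fibre class of an elliptic fibration, producing an effective \emph{Cartier} divisor in the class of $NrL_0$ is nontrivial because of multiple fibres — this is what forces the Frobenius-number analysis of Proposition~\ref{prop: ell num eff} and ultimately why the bound is $N\ge 3$ rather than $N\ge 1$. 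You flag this rounding difficulty at the end, but the abundance step you lean on to get there is itself unavailable.

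In short, the paper's proof is not a Riemann--Roch estimate plus abundance; it first reduces (via $M'=(r-1)(K_X+B+M)+M$) to the case where $K_X+B+M$ itself is Cartier, then runs a genuine case division on whether $K_X+B+2M$ is big (handled by the numerical Kawamata effective nonvanishing, Lemma~\ref{lem: nv kwmt}) or not (handled by classifying the minimal model of $X$ — ruled over an elliptic curve, abelian, bi-elliptic, properly elliptic, or $\mathbb P^1$-fibred over a curve of genus $\ge 2$ — with Serrano's and Bauer's results and Proposition~\ref{prop: ell num eff} supplying the effective statements). These surface-geometry inputs are exactly what your argument is missing.
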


For numerically trivial generalized log canonical surfaces with coefficients from a given set of rational numbers satisfying the descending chain condition (DCC), we prove the existence of a uniform bound on the Cartier indices:
\begin{thm}[= Theorem~\ref{thm: dcc}]
Let $I\subset \QQ_{\geq 0}$ be a set satisfying the descending chain condition. Then there is an $N\in\NN$, depending only on $I$, such that if  $(X, B+\bfM)$ is a projective generalized lc surface with
\begin{enumerate}
\item[(i)] the coefficients of $B$ lying in $I$, 
\item[(ii)] $\bfM=\sum \mu_i \overline{M}_i$ where $M_i$ are nef Cartier divisors on $X$ and $\mu_i\in I$,
\item[(iii)] $K_X+B+\bfM_X\equiv 0$,
\end{enumerate}  
then $N(K_X+B+\bfM_X)$ is Cartier.
\end{thm}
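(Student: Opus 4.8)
The plan is to reduce the statement to a question about the coefficients of $B$ and the multiplicities $\mu_i$, together with a discrete geometric datum, and then invoke a boundedness/finiteness argument driven by the DCC hypothesis. First I would run a minimal resolution and a few standard steps of the MMP to put $(X,B+\bfM)$ into a convenient form: passing to a $\QQ$-factorial dlt (or even terminal, after adding suitable reduced components) model does not change the Cartier index question in a way we cannot control, since the singularities of a numerically trivial generalized lc surface are constrained. The key point is that condition (iii), $K_X+B+\bfM_X\equiv 0$, together with Theorem~\ref{thm: nonvanishing surface q} (more precisely its numerically trivial consequences) forces strong structural restrictions: the underlying surface $X$ has at worst lc singularities, and after contracting we are in one of finitely many ``types'' (a $\PP^1$-bundle or rational/elliptic-type base, an abelian surface, a bielliptic surface, etc.), each carrying only finitely many possible configurations of negative curves and exceptional divisors.

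The core of the argument is then a local-to-global Cartier index estimate. At each closed point $x\in X$, the local Cartier index of $K_X+B+\bfM_X$ divides a number that depends only on: the analytic type of the singularity at $x$; the coefficients of the components of $B$ through $x$, which lie in $I$; and the local contributions of the $\overline{M}_i$. Since the $M_i$ are \emph{Cartier} divisors on $X$, each $\overline{M}_i$ contributes a Cartier class, so the only ``new denominators'' beyond those coming from the coefficients $\mu_i\in I$ are exactly the denominators already present in $K_X+B$. Thus it suffices to bound, uniformly, (a) the denominators appearing in the coefficients that can occur in a numerically trivial lc surface pair with coefficients in $I$, and (b) the orders of the local class groups $\mathrm{Cl}(\mathcal{O}_{X,x})_{\mathrm{tors}}$ (or the relevant quotients) that can occur. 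For (a) one uses the global constraint $K_X+B+\bfM_X\equiv 0$: this is a linear equation relating the $b_i\in I$, the $\mu_j\in I$, and intersection numbers with a fixed finite set of curves, so a DCC set intersected with solutions of such bounded linear systems is finite — this is the ACC/DCC-for-numerically-trivial-pairs philosophy already used in \cite{BZ16}. For (b) one uses that a numerically trivial generalized lc surface has bounded singularities: its minimal resolution has bounded Picard number contribution from the exceptional locus (the dual graphs of lc surface singularities form a known finite list once we are in the numerically trivial regime, or are at worst cycles/chains whose determinants we can bound once coefficients are bounded).

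Concretely the steps are: (1) reduce to $X$ $\QQ$-factorial with $(X,B)$ lc and $\bfM_X$ descending from a resolution where $\bfM$ is the pullback of a nef Cartier divisor; (2) invoke Theorem~\ref{thm: nonvanishing surface q} to write $K_X+B+\bfM_X\equiv D\geq 0$, hence $D\equiv 0$, hence $D=0$ as an effective numerically trivial $\RR$-divisor on a surface, i.e. the equation $K_X+B+\bfM_X\equiv 0$ is ``rigid''; (3) classify the possibilities for $X$ into the finitely many relevant building blocks and observe that in each the configuration of curves that can appear in $B$ or in the exceptional/singular locus is bounded; (4) apply DCC to conclude that only finitely many tuples $(\{b_i\},\{\mu_j\})$ together with finitely many surface germs can occur; (5) take $N$ to be the lcm of all the resulting local Cartier indices. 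The main obstacle I anticipate is step (3)–(4): controlling the interaction between the lc singularities of $(X,B)$ (which may be non-klt, so the dual graphs can be cycles or involve curves of genus $1$) and the requirement that the \emph{generalized} pair be numerically trivial, in order to guarantee that the set of relevant intersection numbers is finite and hence that DCC bites. This is where one has to be careful that allowing $\bfM\neq 0$ does not introduce unbounded flexibility — the hypothesis that the $M_i$ are Cartier (not merely $\QQ$-Cartier) is exactly what prevents this, and making that precise is the crux.
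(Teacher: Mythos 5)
Your proposal and the paper agree on the broad strategy — reduce the problem to a boundedness statement about the local Cartier indices and the coefficient sets, drive the finiteness by the DCC hypothesis and a global ACC — and you correctly flag that the Cartier (as opposed to $\QQ$-Cartier) hypothesis on the $M_i$ is essential, a point the paper illustrates with the example following Theorem~\ref{thm: dcc}. However, your proposal has a concrete gap precisely at what you yourself call ``the crux'' (steps (3)--(4)), and it relies on a step that does no work.

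On the useless step: your step (2), invoking Theorem~\ref{thm: nonvanishing surface q} to produce $D\geq 0$ with $K_X+B+\bfM_X\equiv D$ and then deducing $D\equiv 0$, hence $D=0$, is circular. Hypothesis (iii) already says $K_X+B+\bfM_X\equiv 0$, and the only effective divisor numerically trivial on a projective surface is $0$, so you have learned nothing. The paper makes no such appeal to the numerical nonvanishing theorem in its proof of this statement.

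On the gap: your steps (3)--(4) propose to classify the surfaces $X$ into ``finitely many types'' and bound intersection numbers and singularity germs in each; you acknowledge you don't know how to make this precise. The difficulty is real: there are infinitely many isomorphism classes of lc surface singularities, so there is no a priori finite list of ``building blocks,'' and a classification-by-surface-type approach would require a separate argument that only boundedly many singularity germs can appear — which is exactly the nontrivial content you are missing. The paper sidesteps this entirely by a different and more efficient mechanism: it takes a dlt modification $f\colon\tilde X\to X$ of $(X,B)$ (controlling the change of Cartier index by a fixed factor once the $\mu_i$ land in a finite set), then invokes Alexeev's ACC for \emph{partial minimal log discrepancies} of lc surface singularities (\cite[Theorem~3.2]{Al93}) together with the Global ACC for numerically trivial generalized lc pairs (\cite[Theorem~1.6]{BZ16}) to conclude that the pmld's, the coefficients of $B_{\tilde X}$, and the $\mu_i$ all lie in a fixed finite set $I^0$; finally Alexeev's \cite[Lemma~3.3]{Al93}, which bounds the Cartier index of an lc surface singularity in terms of its pmld and a finite coefficient set, gives the uniform bound. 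These two results of Alexeev are the load-bearing tools your proposal lacks, and without them (or a substitute) your step (4) does not close. You should also note that the paper's dlt modification is of the \emph{pair} $(X,B)$, not of the generalized pair, precisely so that the pmld machinery for ordinary lc surfaces applies; this is a finer point than ``run a few steps of MMP.''
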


In higher dimensions, we confirm Conjecture \ref{conj: nonvanishing} conditionally. Our typical assumption is that $K_X+\bfM_X$ is not pseudo-effective. In this case, one can use the technique of \cite[Proposition 8.7]{DHP13} to construct a Mori fibre space $X\dashrightarrow Y\rightarrow Z$, and then run appropriate MMPs over $Y$ or $Z$ to reach a generalized lc-trivial fibration, so that an induction on dimension can be applied. This has been exploited in \cite{Bi12, G15, DL15, Has17, Has18} for log canonical pairs $(X, B)$ with $K_X$ not pseudo-effective. 


\begin{thm}[= Theorem~\ref{thm: highdim}]\label{thm: highdim intro}
Assume that $\text{Conjecture~\ref{conj: nonvanishing}}$ holds for projective generalized klt pairs in dimensions less than $n$. 

Let $(X,B+\bfM)$ be an $n$-dimensional projective generalized klt pair such that $\bfM$ is an $\RR_{>0}$-linear combination of nef Cartier $\rmb$-divisors. Suppose that $K_X+B+\bfM_X$ is pseudo-effective and $K_X+\bfM_X$ is not pseudo-effective. Then $K_X+B+\bfM_X$ is numerically equivalent to an effective $\RR$-Cartier $\RR$-divisor.
\end{thm}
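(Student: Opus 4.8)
The plan is to exploit the failure of pseudo-effectivity of $K_X+\bfM_X$ to produce a fibration to which the hypothesis — validity of Conjecture~\ref{conj: nonvanishing} for generalized klt pairs in dimension $< n$ — applies. First I would run a $(K_X+\bfM_X)$-MMP; since $K_X+\bfM_X$ is not pseudo-effective, this MMP terminates with a Mori fibre space. Here the subtlety is that $B$ is nonzero, so I should be careful which divisor to run the MMP on: following the strategy of \cite[Proposition~8.7]{DHP13} as indicated in the excerpt, I would instead work with a carefully chosen auxiliary boundary. Concretely, since $K_X+B+\bfM_X$ is pseudo-effective but $K_X+\bfM_X$ is not, by convexity there is a largest $t_0\in(0,1]$ such that $K_X+t_0B+\bfM_X$ is pseudo-effective; running a $(K_X+(t_0-\epsilon)B+\bfM_X)$-MMP (or directly using the not-pseudo-effectiveness of $K_X+\bfM_X$) yields a birational contraction $\phi\colon X\dashrightarrow Y$ and a Mori fibre space $\pi\colon Y\to Z$ with $\dim Z<n$, such that the pushforward pair $(Y, B_Y+\bfM)$ is still generalized klt and $K_Y+B_Y+\bfM_Y$ is pseudo-effective (pseudo-effectivity is preserved under the steps of an MMP on a different divisor, as these steps are $(K_Y+B_Y+\bfM_Y)$-nonnegative by the choice of $t_0$). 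The point of $\pi$ being a $(K_Y+\bfM_Y)$-Mori fibre space is that $-(K_Y+\bfM_Y)$ is $\pi$-ample, so $K_Y+B_Y+\bfM_Y$ is $\pi$-ample; hence its restriction to a general fibre $F$ is ample, and in particular numerically equivalent to an effective divisor on $F$.

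Next I would set up a generalized canonical bundle formula / adjunction for the fibration $\pi\colon Y\to Z$. Running a relative $(K_Y+B_Y+\bfM_Y)$-MMP over $Z$ (which terminates because $K_Y+B_Y+\bfM_Y$ is relatively big, being $\pi$-ample up to the nef part) produces a relative minimal model $Y'\to Z$ on which $K_{Y'}+B_{Y'}+\bfM_{Y'}$ is relatively semiample, giving a contraction $Y'\to Z'$ over $Z$; after this one has a generalized lc-trivial fibration $Y'\to Z'$ with $\dim Z' \le \dim Z< n$ (strict inequality $\dim Z'<n$ holds because $\dim Z<n$ and $Z'\to Z$ is a contraction). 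By the generalized canonical bundle formula (available for generalized pairs, cf.\ \cite{Fi18, HLiu19} cited in the introduction), there is a generalized klt — more precisely generalized lc, but one arranges klt by a standard perturbation — pair $(Z', B_{Z'}+\bfN)$ with $\bfN$ an $\RR_{>0}$-linear combination of nef Cartier $\rmb$-divisors, such that $K_{Y'}+B_{Y'}+\bfM_{Y'}\sim_\RR \psi^*(K_{Z'}+B_{Z'}+\bfN_{Z'})$ where $\psi\colon Y'\to Z'$. Pseudo-effectivity descends: $K_{Z'}+B_{Z'}+\bfN_{Z'}$ is pseudo-effective on $Z'$.

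Then I apply the induction hypothesis. Since $\dim Z'<n$ and $(Z', B_{Z'}+\bfN)$ is a projective generalized klt pair with $\bfN$ an $\RR_{>0}$-linear combination of nef Cartier $\rmb$-divisors and with $K_{Z'}+B_{Z'}+\bfN_{Z'}$ pseudo-effective, Conjecture~\ref{conj: nonvanishing} in dimension $\dim Z'$ gives an effective $\RR$-Cartier $\RR$-divisor $D_{Z'}$ with $K_{Z'}+B_{Z'}+\bfN_{Z'}\equiv D_{Z'}$. Pulling back, $K_{Y'}+B_{Y'}+\bfM_{Y'}\equiv \psi^* D_{Z'}$, which is effective. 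Finally I transport this back to $X$: the divisor $\psi^*D_{Z'}$ is effective on $Y'$, and since $X\dashrightarrow Y'$ is a composition of MMP steps for (a perturbation of) $K_X+B+\bfM_X$, these steps are numerically trivial or negative on the relevant divisor, so pulling back via a common resolution $p\colon W\to X$, $q\colon W\to Y'$ gives $p^*(K_X+B+\bfM_X)\equiv q^*(K_{Y'}+B_{Y'}+\bfM_{Y'}) + (\text{effective, $p$-exceptional})\equiv q^*\psi^*D_{Z'}+E$ with $E\ge 0$; pushing forward to $X$ yields $K_X+B+\bfM_X\equiv D$ with $D=p_*(q^*\psi^*D_{Z'}+E)\ge 0$. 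This completes the argument.

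The main obstacle I anticipate is the bookkeeping in the two MMPs: ensuring that all intermediate pairs remain generalized klt (or can be perturbed to be so while keeping the nef part an $\RR_{>0}$-combination of nef Cartier $\rmb$-divisors — the ``NQC'' condition of hypothesis (ii) in Conjecture~\ref{conj: nonvanishing}, which must be preserved under pushforward and under the canonical bundle formula), that the MMPs terminate (using that the relevant divisors are big in the appropriate relative sense, so one stays within known termination results for generalized pairs), and that pseudo-effectivity of $K_X+B+\bfM_X$ is genuinely inherited by $K_{Z'}+B_{Z'}+\bfN_{Z'}$ through all these operations. The delicate interplay is between running the MMP on $K_X+\bfM_X$ (to build the fibre space) while tracking $K_X+B+\bfM_X$ (to preserve pseudo-effectivity and the final conclusion); the choice of threshold $t_0$ and a general ample perturbation to make every step of the MMP simultaneously controlled for both divisors is where the care is needed.
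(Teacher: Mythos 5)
Your overall strategy is the paper's: fix the pseudo-effective threshold $t_0$ of $B$, construct a Mori fibre space by running an MMP below the threshold, pass to a generalized lc-trivial fibration over a lower-dimensional base, apply the generalized canonical bundle formula and the inductive hypothesis, and transport back. But there is a genuine gap in the dimension-drop step, and it is not cosmetic.

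You track the \emph{full} divisor $K_Y+B_Y+\bfM_Y$ through the second, relative MMP over $Z$ and then take its relative Iitaka fibration $\psi\colon Y'\to Z'$. You assert $\dim Z'<n$ ``because $\dim Z<n$ and $Z'\to Z$ is a contraction'' — this is simply false logic: a contraction over $Z$ can have source of any dimension up to $\dim Y'$ (after all, $Y'\to Z$ is itself a contraction with $\dim Y'=n$). In fact, in the generic situation the dimension does \emph{not} drop. On the Mori fibre space $\pi\colon Y\to Z$, because $\rho(Y/Z)=1$ and by the threshold construction $K_Y+t_0B_Y+\bfM_Y\equiv_Z 0$ while $K_Y+(t_0-\epsilon)B_Y+\bfM_Y$ is $\pi$-antiample, the divisor $B_Y$ is $\pi$-ample; hence when $t_0<1$,
\[
K_Y+B_Y+\bfM_Y \;\equiv_Z\; (1-t_0)B_Y
\]
is $\pi$-\emph{ample}, and after any relative MMP it remains $\pi$-big. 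Its relative Iitaka fibration over $Z$ is then the identity, so $Z'=Y'$, $\dim Z'=n$, and the inductive hypothesis cannot be invoked. (Your intermediate claim ``$-(K_Y+\bfM_Y)$ is $\pi$-ample, so $K_Y+B_Y+\bfM_Y$ is $\pi$-ample'' is also a non sequitur as written; the $\pi$-ampleness of $K_Y+B_Y+\bfM_Y$ for $t_0<1$ does hold, but via $\rho(Y/Z)=1$ and the threshold, not from that implication. Ironically, this correct conclusion is exactly what kills your dimension count.)

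The fix is the one the paper makes explicitly: after locating $t_0$, replace the pair by $(X, t_0B+\bfM)$ and carry $K_X+t_0B+\bfM_X$ — not $K_X+B+\bfM_X$ — through the construction. On the Mori fibre space one has $K_Y+t_0B_Y+\bfM_Y\equiv_Z 0$, which is $\pi$-\emph{trivial}, not $\pi$-big; then the good minimal model over $Z$ produced by Lemma~\ref{lem: gmm2} has a relative Iitaka fibration with positive-dimensional fibres (this is argued there by observing that $K_U+B_U+\bfM_U\sim_{\RR,Z}-E_U\le 0$ on the auxiliary model, so it is not big over $Z$), so the base $\tilde Z$ does have $\dim\tilde Z<n$, and the generalized canonical bundle formula and induction apply. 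Finally, since $t_0\le 1$ and $B\ge 0$, num-effectivity of $K_X+t_0B+\bfM_X$ gives num-effectivity of $K_X+B+\bfM_X=(K_X+t_0B+\bfM_X)+(1-t_0)B$. You already introduced $t_0$; the error is in switching back to the full boundary before the Iitaka fibration step.
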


Assuming the termination of flips for $\Qq$-factorial dlt pairs, Theorem~\ref{thm: highdim intro} readily extends to the generalized lc pairs with rational singularities:
\begin{thm}[= Theorem~\ref{thm: highdim dlt}]\label{thm: highdim dlt0}
Assume that $\text{Conjecture~\ref{conj: nonvanishing}}$ holds in dimensions less than $n$. 
Assume that the termination of flips for $n$-dimensional projective $\Qq$-factorial dlt pairs holds. 

Let $(X,B+\bfM)$ be an $n$-dimensional projective generalized lc pair with rational singularities such that $\bfM$ is an $\RR_{>0}$-linear combination of nef Cartier $\rmb$-divisors. Suppose that $K_X+B+\bfM_X$ is pseudo-effective and $K_X+\bfM_X$ is not pseudo-effective. Then there exists an effective $\RR$-Cartier $\RR$-divisor $D$ such that $K_X+B+\bfM_X\equiv D$.
\end{thm}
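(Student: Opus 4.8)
The plan is to bootstrap from the generalized klt case, Theorem~\ref{thm: highdim intro}. First I would pass to a $\QQ$-factorial generalized dlt modification $f\colon(X',B'+\bfM)\to(X,B+\bfM)$, so that $K_{X'}+B'+\bfM_{X'}=f^*(K_X+B+\bfM_X)$ and $(X',B'+\bfM)$ is $\QQ$-factorial generalized dlt with $X'$ klt. Pullback preserves pseudo-effectivity, so $K_{X'}+B'+\bfM_{X'}$ is pseudo-effective; and since $f_*(K_{X'}+\bfM_{X'})=K_X+\bfM_X$ while pushforward preserves pseudo-effectivity, $K_{X'}+\bfM_{X'}$ is not pseudo-effective. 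If $K_{X'}+B'+\bfM_{X'}\equiv D'$ with $D'\geq 0$, then $K_X+B+\bfM_X=f_*(K_{X'}+B'+\bfM_{X'})\equiv f_*D'\geq 0$, numerical equivalence being preserved by $f_*$ via the projection formula for curves. Hence it suffices to prove the statement on $X'$, and from now on I assume that $(X,B+\bfM)$ is itself $\QQ$-factorial generalized dlt with $X$ klt.

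Next, set $\mu:=\inf\{\,t\in[0,1]\ :\ K_X+tB+\bfM_X\ \text{is pseudo-effective}\,\}$. As the pseudo-effective cone is closed and convex and $K_X+tB+\bfM_X$ varies along a line, the set of such $t$ is a closed subinterval of $[0,1]$ that contains $1$ but not $0$, hence equals $[\mu,1]$ with $\mu\in(0,1]$. If $\mu<1$, then $(X,\mu B+\bfM)$ is generalized klt --- its boundary has coefficients in $[0,\mu]\subset[0,1)$ and $X$ is klt --- with $K_X+\mu B+\bfM_X$ pseudo-effective and $K_X+\bfM_X$ not pseudo-effective, so Theorem~\ref{thm: highdim intro} produces an effective $\RR$-Cartier $\RR$-divisor $D_0$ with $K_X+\mu B+\bfM_X\equiv D_0$, whence $K_X+B+\bfM_X\equiv D_0+(1-\mu)B\geq 0$. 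This settles the case $\mu<1$.

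The case $\mu=1$ --- where $K_X+B+\bfM_X$ is pseudo-effective but $K_X+tB+\bfM_X$ is not for any $t<1$ --- is the heart of the matter, and I would treat it by rerunning the strategy of Theorem~\ref{thm: highdim intro} in the generalized dlt setting. Using the assumed termination of flips for $\QQ$-factorial dlt pairs, first pass to a minimal model of $(X,B+\bfM)$, so that $K_X+B+\bfM_X$ becomes nef; then, for $0<\epsilon\ll1$, run a $(K_X+(1-\epsilon)B+\bfM_X)$-MMP with scaling, whose steps --- since $K_X+B+\bfM_X$ is already nef and $\epsilon$ is small --- are $(K_X+B+\bfM_X)$-trivial, and which (as $K_X+(1-\epsilon)B+\bfM_X$ is not pseudo-effective) terminates with a Mori fibre space $g\colon Y\to Z$, $\dim Z<n$, on which $N_Y:=K_Y+B_Y+\bfM_Y$ is still nef (in the spirit of \cite[Proposition~8.7]{DHP13}). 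One then analyses $N_Y$ relative to $g$. If $N_Y$ is $g$-numerically trivial, the generalized canonical bundle formula produces a generalized pair $(Z,B_Z+\bfN)$, with $\bfN$ again an $\RR_{>0}$-linear combination of nef Cartier $\rmb$-divisors --- this is where the rational singularities hypothesis is used --- such that $N_Y\equiv g^*(K_Z+B_Z+\bfN_Z)$ and $K_Z+B_Z+\bfN_Z$ is pseudo-effective, so the inductive hypothesis (Conjecture~\ref{conj: nonvanishing} in dimension $<n$) yields an effective $D_Z$ with $K_Z+B_Z+\bfN_Z\equiv D_Z$, hence $N_Y\equiv g^*D_Z\geq 0$. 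If instead $N_Y$ is $g$-ample, one descends the conclusion via adjunction to a general fibre together with positivity of direct images, as in the log canonical case treated in \cite{Bi12,G15,DL15,Has17,Has18}. Finally one transports the resulting effective divisor back through the MMPs --- using that on a common resolution $K+B+\bfM$ is the pullback of the corresponding divisor on $Y$ plus an effective exceptional divisor --- and then through $f$ to $X$.

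I expect the case $\mu=1$ to be the main obstacle: it cannot be fed directly into Theorem~\ref{thm: highdim intro}, and redoing the Mori-fibre-space-and-induction argument in the generalized dlt (rather than klt) setting requires keeping the reduced part of the boundary, and the pseudo-effectivity data, under control through the MMPs; verifying that the generalized canonical bundle formula delivers the required structure on the base (where rational singularities enters); and handling the case in which $N_Y$ is relatively ample rather than relatively trivial over $Z$.
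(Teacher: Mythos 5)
Your overall approach agrees with the paper's: reduce to a $\QQ$-factorial generalized dlt modification, scale the boundary to the pseudo-effective threshold, and then descend via a Mori fibre space and the generalized canonical bundle formula to apply the inductive hypothesis. However, your treatment of the main case $\mu=1$ has a genuine gap, and your dichotomy is not what the paper does.

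The key tool you are missing is the rationality/polytope lemma from \cite{HL18} (Lemma~3.17 there, cited in the paper's proof). The paper proceeds uniformly for $t_0\in(0,1]$: after replacing by the generalized dlt modification, it uses the termination hypothesis together with \cite[Theorem~1.6]{HL18} to construct a \emph{log terminal model} $X'$ for $K_X+t_0 B+\bfM_X$, then \cite[Lemma~3.17]{HL18} to choose a specific $t_1<t_0$ so that \emph{any} $(K_{X'}+t_1 B_{X'}+\bfM_{X'})$-MMP is automatically $(K_{X'}+t_0 B_{X'}+\bfM_{X'})$-trivial. This has two consequences that your write-up does not secure. First, it makes the assertion ``the MMP steps are $(K_X+B+\bfM_X)$-trivial because $\epsilon$ is small'' rigorous --- this is not automatic for an arbitrary small $\epsilon$, one needs to pick $t_1$ in the correct rational sub-polytope. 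Second, and more importantly, it forces the final Mori fibre space $g\colon Y\to Z$ to satisfy $K_Y+t_0 B_Y+\bfM_Y\equiv_g 0$ (since $\rho(Y/Z)=1$ and the contracted ray is $t_0$-trivial). Consequently your second alternative, where $N_Y$ is $g$-ample, simply does not occur, and the vague appeal to ``adjunction to a general fibre together with positivity of direct images'' --- which you correctly identify as the weak point --- is both unnecessary and unjustified. As written, the $\mu=1$ case is not a proof.

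Two smaller remarks. First, your claim that $(X,\mu B+\bfM)$ is generalized klt for $\mu<1$ is correct, but the justification ``coefficients in $[0,\mu)\subset[0,1)$ and $X$ is klt'' is not sufficient for generalized pairs: the nef part $\bfM$ can itself create generalized non-klt centers. What saves you is the \emph{generalized dlt} structure, which forces $\bfM$ to descend near any generalized non-klt center, so that scaling $B$ down strictly increases all the relevant generalized log discrepancies; this should be said. Second, your $\mu<1$ shortcut via Theorem~\ref{thm: highdim intro} is a valid simplification that the paper does not make explicit (the paper redoes the MFS construction uniformly for all $t_0\in(0,1]$); it is a reasonable reorganization, but it only isolates rather than resolves the hard case $\mu=1$.
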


The following corollary of Theoerem~\ref{thm: highdim dlt0} is inspired by the nonvanishing for pseudo-effective log canonical pairs of numerically trivial type, established in \cite{Has17}.
\begin{cor}[= Corollary~\ref{thm: CYnonvanishing threefold}]
	Let $(X, B+\bfM)$ be a projective generalized lc threefold with rational singularities such that $\bfM$ is an $\RR_{>0}$-linear combination of nef Cartier $\rmb$-divisors. Suppose that
	\begin{enumerate}[leftmargin=*]
		\item[(i)]$K_X+B+\bfM_X$ is pseudo-effective, and
		\item[(ii)] there exists a generalized lc pair $(X, C+\bfM)$ with $K_X+C+\bfM_X\equiv 0$.
	\end{enumerate}
	Then $K_X+B+\bfM_X$ is num-effective.
\end{cor}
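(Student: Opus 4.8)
The plan is to deduce the corollary from Theorem~\ref{thm: highdim dlt0} (the three-dimensional case, where the termination of flips for $\QQ$-factorial dlt threefolds is known, so the statement is unconditional once Conjecture~\ref{conj: nonvanishing} is available in dimensions $\le 2$, which is Theorem~\ref{thm: nonvanishing surface q}). First I would reduce to the case where $K_X+\bfM_X$ is not pseudo-effective. Indeed, if $K_X+\bfM_X$ \emph{is} pseudo-effective, then I would use the hypothesis (ii): from $K_X+C+\bfM_X\equiv 0$ we get $K_X+\bfM_X\equiv -C$, and $-C$ pseudo-effective forces $C$ to be numerically trivial (an effective divisor whose negative is pseudo-effective is $\equiv 0$), hence $K_X+\bfM_X\equiv 0$. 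In that situation $K_X+B+\bfM_X\equiv B\ge 0$ is already numerically equivalent to the effective divisor $B$ (here using that $(X,B+\bfM)$ is generalized lc so $B\ge 0$), so the conclusion is immediate.

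It therefore remains to treat the case $K_X+\bfM_X$ not pseudo-effective, which is exactly the hypothesis of Theorem~\ref{thm: highdim dlt0} with $n=3$. The remaining hypotheses of that theorem — $(X,B+\bfM)$ a projective generalized lc threefold with rational singularities, $\bfM$ an $\RR_{>0}$-linear combination of nef Cartier $\rmb$-divisors, and $K_X+B+\bfM_X$ pseudo-effective — are all part of the hypotheses of the corollary. Hence Theorem~\ref{thm: highdim dlt0} applies directly and yields an effective $\RR$-Cartier $\RR$-divisor $D$ with $K_X+B+\bfM_X\equiv D$, i.e.\ $K_X+B+\bfM_X$ is num-effective.

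I should double check that the hypothesis (ii) is genuinely used only in the pseudo-effective branch and is not needed to invoke Theorem~\ref{thm: highdim dlt0}; this is the case, so condition (ii) serves precisely to rule out the one case — $K_X+\bfM_X$ pseudo-effective — not covered by Theorem~\ref{thm: highdim dlt0}. The only mild subtlety (and the part I would state carefully rather than the "hard part", since there is no hard part here) is the elementary fact that if $N$ is pseudo-effective and $-N\equiv C$ with $C\ge 0$ effective, then $N\equiv 0$: one intersects with a movable curve class, or more robustly, writes $0\equiv N+C$ as a sum of a pseudo-effective and an effective class and notes this forces both to be numerically trivial. Thus the corollary follows by combining this observation with Theorems~\ref{thm: nonvanishing surface q} and~\ref{thm: highdim dlt0}.
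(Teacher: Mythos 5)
Your proposal is correct and follows essentially the same route as the paper. The paper's proof splits on whether $C=0$ or $C\neq 0$ and observes directly that $C\neq 0$ forces $K_X+\bfM_X\equiv -C$ to be non-pseudo-effective, then invokes Theorem~\ref{thm: highdim dlt}; you split on whether $K_X+\bfM_X$ is pseudo-effective and deduce $C\equiv 0$ (hence $K_X+\bfM_X\equiv 0$) in the affirmative branch, which is the same dichotomy stated contrapositively, and you correctly note that the hypotheses of Theorem~\ref{thm: highdim dlt0} — termination of flips for $\QQ$-factorial dlt threefolds and Conjecture~\ref{conj: nonvanishing} in dimensions $\le 2$ — are supplied unconditionally for $n=3$.
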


Up to scaling the nef part, we are able to prove the numerical nonvanishing for projective generalized lc threefolds with rational singularities. 
\begin{thm}[= Theorem \ref{thm: dim3t}]\label{thm: scaleM}
	Let $(X,B+\bfM)$ be a projective generalized lc threefold with rational singularities such that $\bfM$ is an $\RR_{>0}$-linear combination of nef Cartier $\rmb$-divisors. If $K_X+B+\bfM_X$ is pseudo-effective and $\bfM_X$ is $\RR$-Cartier, then there exists a $0\le t\le 1$ such that $K_X+B+t\bfM_X$ is numerically equivalent to an effective $\RR$-Cartier $\RR$-divisor.
\end{thm}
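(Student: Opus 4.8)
The plan is to run a careful case analysis on the pseudo-effectivity of $K_X+\bfM_X$ and to reduce, as much as possible, to the earlier results of the paper. First I would treat the trivial case: if $K_X+B+\bfM_X$ itself is already num-effective we take $t=1$ and are done, so assume it is not. Consider the nef $\bm{b}$-divisor decomposition $\bfM=\sum_j \mu_j\bfM_j$. The idea is to study the function $t\mapsto$ ``$K_X+B+t\bfM_X$ is pseudo-effective and/or num-effective'' on $[0,1]$. Since $K_X+B+\bfM_X$ is pseudo-effective by hypothesis, the set $T=\{t\in[0,1]: K_X+B+t\bfM_X \text{ is pseudo-effective}\}$ is a closed subinterval $[t_0,1]$ (closed because the pseudo-effective cone is closed; an interval because pseudo-effectivity is preserved under convex combination with the pseudo-effective class $\bfM_X$). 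If $t_0=0$, then $K_X+B$ is pseudo-effective, so in particular $K_X+\bfM_X$ may or may not be pseudo-effective; if $K_X+\bfM_X$ is \emph{not} pseudo-effective we may apply Theorem~\ref{thm: highdim dlt0} directly (assuming its hypotheses — here in dimension $3$ we need Conjecture~\ref{conj: nonvanishing} in dimensions $<3$, which is Theorem~\ref{thm: nonvanishing surface q}, together with termination of flips for threefold $\QQ$-factorial dlt pairs, which is known) and conclude with $t=1$.

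The substantive case is therefore when $K_X+\bfM_X$ \emph{is} pseudo-effective, so that $K_X+B'+\bfM_X$ is pseudo-effective for every $0\le B'\le B$, and in particular $t_0$ need not help us directly. Here my plan is to instead scale so as to land on the boundary of the pseudo-effective cone, i.e. to choose $t=t_0$ so that $K_X+B+t_0\bfM_X$ is pseudo-effective but on the boundary of the pseudo-effective cone (this is possible precisely when $K_X+B$ is \emph{not} pseudo-effective; if $K_X+B$ is pseudo-effective replace $B$ by a smaller divisor or argue that $\bfM_X$ contributes nothing and pass to $(X,B)$ directly via Corollary-type arguments). A class on the boundary of the pseudo-effective cone has numerical dimension $\le 2$ on a threefold; I would then split into subcases according to the numerical dimension $\nu$ of $L:=K_X+B+t_0\bfM_X$. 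If $\nu=0$, then $L\equiv 0$ is already effective (the zero divisor), done. If $\nu=1$ or $2$, one runs a $(K_X+B+t_0\bfM_X)$-MMP; since $L$ is pseudo-effective this terminates (termination of flips in dimension $3$) at a minimal model on which $L$ becomes nef, and then one invokes abundance-type results for nef generalized lc pairs on threefolds — or more precisely the generalized log canonical ring / the structure of nef but non-big adjoint divisors via the generalized canonical bundle formula applied to the Iitaka-type fibration — to produce the effective divisor numerically equivalent to $L$. The point of scaling down $\bfM$ is exactly to reach the boundary, where the numerical dimension drops and these lower-dimensional techniques (again feeding on Theorem~\ref{thm: nonvanishing surface q}) become available.

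The main obstacle I expect is the case $\nu(L)=2$ with $L$ nef but not big on the threefold minimal model: one must show the generalized lc-trivial fibration structure obtained from the canonical bundle formula actually yields a num-effective divisor, which requires controlling the moduli $\bm{b}$-divisor and the discriminant part and then applying numerical nonvanishing on the (surface or curve) base — this is where I would lean on Theorem~\ref{thm: nonvanishing surface q} and on the rational-singularities hypothesis (to keep the base and the total space well-behaved, and to ensure $\bfM_X$ being $\RR$-Cartier propagates through the MMP). A secondary technical nuisance is making sure that running the MMP for $K_X+B+t_0\bfM_X$ when $\bfM_X$ is merely $\RR$-Cartier (not necessarily $\QQ$-Cartier) and $B$ has real coefficients is legitimate in the generalized-pair setting; this should follow from the now-standard generalized MMP machinery together with the assumed termination, but the bookkeeping of the nef $\bm{b}$-divisor parts through flips and divisorial contractions needs care. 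Finally, one must check $t_0\in[0,1]$, which is immediate from $K_X+B+\bfM_X$ being pseudo-effective and the definition of $t_0$ as an infimum over a nonempty closed set containing $1$.
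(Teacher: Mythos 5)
Your proposal gets the right starting move — set
\[
t_0 := \inf\{\,t\ge 0 \mid K_X+B+t\bfM_X \text{ is pseudo-effective}\,\},
\]
which is exactly the quantity the paper scales on — but the heart of the argument, what to do when $t_0>0$, has a real gap.

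When $t_0=0$ the paper simply notes $K_X+B$ is pseudo-effective and invokes the known nonvanishing theorem for lc threefolds (\cite{Sho96,KMM94}), taking $t=0$; your detour through whether $K_X+\bfM_X$ is pseudo-effective and ``conclude with $t=1$'' via Theorem~\ref{thm: highdim dlt0} is both more complicated and incomplete (you leave unresolved the subcase $t_0=0$ and $K_X+\bfM_X$ pseudo-effective). The more serious problem is the $t_0>0$ case. You propose to run a $(K_X+B+t_0\bfM_X)$-MMP to reach a minimal model where $L:=K_X+B+t_0\bfM_X$ is nef, and then ``invoke abundance-type results for nef generalized lc pairs on threefolds,'' or pass to an ``Iitaka-type fibration.'' This is circular: there is no abundance theorem for nef generalized lc threefolds (the num-effectiveness you want is exactly the content of what is being proven, and Example~\ref{ex: counterexampledltabundance} already shows semiampleness can fail), and a nef, non-semiample $\RR$-divisor has no Iitaka fibration to which one could apply the canonical bundle formula. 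The observation that $L$ lies on the boundary of the pseudo-effective cone, and hence is non-big, does not by itself produce the lc-trivial fibration needed to descend to a surface.

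The paper's mechanism is different and is the key idea you are missing. Since $t_0>0$, the divisor $K_X+B+t_1\bfM_X$ is \emph{not} pseudo-effective for $t_1$ slightly less than $t_0$. After a log terminal model for $K_X+B+t_0\bfM_X$ (available by termination for lc threefolds and \cite[Theorem 1.6]{HL18}), one picks $t_1\in(t_0-\epsilon,t_0)$ using \cite[Lemma 3.18]{HL18} so that a $(K_{X'}+B_{X'}+t_1\bfM_{X'})$-MMP is simultaneously $(K_{X'}+B_{X'}+t_0\bfM_{X'})$-trivial. Because the $t_1$-divisor is not pseudo-effective, this MMP terminates at a \emph{Mori fibre space} $f\colon Y\to Z$ with $\dim Z\le 2$, and by the triviality one gets $K_Y+B_Y+t_0\bfM_Y\sim_{\RR,f}0$. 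Only now can Theorem~\ref{thm: cbf} be applied, giving $K_Y+B_Y+t_0\bfM_Y\sim_\RR f^*(K_Z+B_f+\bfM_{f,Z})$, and the surface/curve case Theorem~\ref{thm: nv} finishes. In short: the lc-trivial fibration comes from dropping below the pseudo-effective threshold to get a Mori fibre space, not from a hypothetical nef-abundance on the minimal model of $L$ itself. Without that step, the argument does not close.
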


The paper is organized as follows. In Section 2 we explain the relevant notions of divisors and define generalized polarized pairs and their singularities; a decomposition of nef (resp.~antinef) generalized log canonical divisors with $\RR$-coefficients into those with $\QQ$-coefficients is provided. In Section 3 we prove Conjecture~\ref{conj: nonvanishing} in dimension two and draw consequences thereof; several effective versions of the numerical nonvanishing are established. At the end of Section~\ref{sec: dim2} we discuss the (failure of) numerical abundance for generalized log canonical surfaces. In Section 4 we go to higher dimensions, restricting our attention mainly to generalized lc pairs with rational singularities;  after developing necessary tools for constructing generalized lc-trivial fibrations on generalized polarized pairs, we prove Theorems~\ref{thm: highdim intro} up to \ref{thm: scaleM}.

\medskip

\noindent{\bf Convention.}  Throughout the paper, we work over the complex numbers. We use the terminology in \cite{KM98} and \cite{Fu17} in general. For a set $A$ of real numbers, we use $A_{\geq 0}$ to denote the subset $\{a\in A \mid a\geq 0\}$. The subsets $A_{>0}$, $A_{\leq 0}$ and $A_{<0}$ are similarly defined. For an abelian group $G$ and a commutative ring $\FF$, we denote $G_{\FF} := G\otimes_{\ZZ} \FF$. 

\medskip
\noindent\textbf{Acknowledgements}.
Much of this work was done when the first author was visiting the second author at Xiamen University in June, 2018. The first author would like to thank Xiamen University for its hospitality.  
He would like to thank Christopher D.~Hacon, Chen Jiang and Roberto Svaldi for many useful discussions, and he also would like to thank his advisors Gang Tian and Chenyang Xu for constant support and encouragement. Part of this work was done during a visit of the second author to Universit\"at Bayreuth in August, 2018. He enjoyed the stimulating academic atmosphere there and wants to thank the members of Lehrstuhl Mathematik VIII for their hospitality. Upon finishing the first draft of this paper, we learned that Vladimir Lazi\'c and Thomas Peternell were working on a similar topic, independently. We want to thank them  for the consequent communication and helpful comments. Thanks also go to a referee who pointed out a gap in an earlier version of the paper, as well as a statement hidden in \cite{Sh00}, which becomes our Corollary~\ref{cor: antinef}. The second author was partially supported by the NSFC (No.~11971399, No.~11771294).
\section{Preliminaries}\label{sec: pre}
\subsection{Divisors} 
In this subsection, $X$ is a given normal variety equipped with a projective morphism $f\colon X\rightarrow S$. Let $\FF$ denote the integer ring $\ZZ$, the rational number field $\QQ$, or the real number field $\RR$. 
\begin{itemize}[leftmargin=*]
\item  An $\FF$-divisor is a finite formal $\FF$-linear combination $D = \sum_j d_j D_j $ of distinct prime Weil divisors $D_j$. The coefficients $d_j$ are also written as $\mult_{D_j} D$.  An $\FF$-divisor \emph{over} $X$ means  an $\FF$-divisor on a higher birational model $Y\rightarrow X$. We use $\sim_{\FF}$ to denote the $\FF$-linear equivalence between two $\FF$-divisors. An $\FF$-divisor is called an \emph{effective} (resp.~\emph{boundary}, resp.~\emph{subboundary}) divisor if its coefficients lie in $\FF_{\geq 0}$ (resp.~$[0,1]$, resp.~$\FF_{\leq 1}$).
 
 Note that a $\ZZ$-divisor is nothing but a Weil divisor and $\ZZ$-linear equivalence is just the usual linear equivalence. A $\ZZ$-divisor is Cartier if locally it can be defined by one rational function. The Weil divisors form a free abelian group, denoted by $\Div(X)$, and the subgroup of Cartier divisors is denoted by $\CDiv(X)$. For $\FF=\QQ$ or $\RR$, an $\FF$-Cartier $\FF$-divisor is an $\FF$-linear combination of Cartier divisors; an $\FF$-divisor (resp.~an $\FF$-Cartier $\FF$-divisor) can be viewed as an element of the $\FF$-vector space $\Div(X)_\FF$ (resp.~$\CDiv(X)_\FF$). 

\item In view of the projective morphism $f\colon X\rightarrow S$, we use $\sim_{\FF, f}$ or $\sim_{\FF, S}$ to denote the  $\FF$-linear equivalence between $\FF$-divisors, relative with respect to $f$.  Let $Z_1(X/S)$ be the free abelian group generated by integral curves that map to points on $S$. Then the intersection pairing $\CDiv(X) \times Z_1(X/S) \rightarrow \ZZ$, $(D, C) \mapsto D\cdot C$
induces a \emph{numerical equivalence} on both $\CDiv(X)$ and $Z_1(X/S)$, denoted by $\equiv_f$ or $\equiv_S$.  Set $N^1(X/S) = \CDiv(X)/\equiv_f$ and $N_1(X/S) = Z_1(X/S)/\equiv_f$. Then we obtain a perfect pairing of finite dimensional $\RR$-vector spaces
\[
N^1(X/S)_\RR\times N_1(X/S)_\RR\rightarrow \RR
\]
The \emph{cone of curves} $NE(X/S)\subset N_1(X)_\RR$ is the cone generated by numerical classes of integral curves $C$ in the fibres of $f\colon X\rightarrow S$; its closure is denoted by $\overline{NE}(X/S)$.

Let $D$ be an $\RR$-Cartier $\RR$-divisor on $X$. We say 
\begin{itemize}[leftmargin=*]
\item $D$ is \emph{nef/$S$} (read "nef over $S$") or \emph{$f$-nef} (resp.~\emph{antinef/$S$} or \emph{$f$-antinef}), if the intersection $D\cdot C\geq 0$ (resp.~$-D\cdot C\geq 0$) for any integral curve $C$ in a fibre of $f\colon X\rightarrow S$;
\item $D$ is \emph{pseudo-effective}/$S$ if its numerical class lies in the closure of the cone generated by the numerical classes of nef/$S$ $\RR$-divisors;
\item $D$ is \emph{num-effective}/$S$ if its numerical class lies in the effective cone $NE(X/S)$ (see \cite{LP18a});
\item $D$ is \emph{num-semiample}/$S$ if it is numerically equivalent to an semiample/$S$ $\RR$-divisor, the latter meaning a finite $\RR_{> 0}$-linear combination of semiample/$S$ Cartier divisors.
\end{itemize}


\end{itemize}
\begin{defn}
For $\FF=\ZZ, \QQ$ or $\RR$, a $\rmb$-$\FF$-divisor on $X$ is an element of the projective limit $${ \bfDiv} (X)_\FF = \lim_{Y\rightarrow X} \Div(Y)_\FF,$$ where the limit is taken over all proper birational morphism $\rho\colon Y\rightarrow X$ from a normal variety $Y$ with the (cycle-)pushforward homomorphism $$\rho_*\colon \Div(Y)_\FF \rightarrow \Div(X)_\FF.$$ In other words, a $\rmb$-$\FF$-divisor $\bfD$ on $X$ is a collection of $\FF$-divisors $\bfD_Y$ on birational models of $X$ that are compatible under pushforward; the divisors $\bfD_Y$ are called the traces of $\bfD$ on the birational models $Y$. A $\rmb$-divisor on $X$ is naturally a $\rmb$-divisor on any birational model of $X$.

The closure of an $\FF$-Cartier $\FF$-divisor $D$ on $X$ is the $\rmb$-$\FF$-divisor $\overline D$ with trace $\overline D_Y=\rho^* D$ for any proper birational morphism $\rho\colon Y\rightarrow X$. A $\rmb$-$\FF$-divisor $\bfD$ over $X$ is $\FF$-Cartier if $\bfD = \overline D$ where  $D$ is an $\FF$-Cartier $\FF$-divisor on a birational model $Y$ over $X$; in this situation, we say $\bfD$ descends to $Y$. 

Let $\bfD$ be a $\FF$-Cartier $\rmb$-$\FF$-divisor that descends to a birational model $Y$ over $X$. We say
\begin{itemize}[leftmargin=*]
\item $\bfD$ is nef/$S$ if  $\bfD_Y$ is nef/$S$;
\item $\bfD$ is \emph{effective}/$S$ (resp.~\emph{num-effective}/$S$, resp.~\emph{num-semiample}/$S$) if $\bfD_Y$ is effective/$S$ (resp.~num-effective/$S$, resp.~num-semiample/$S$).
\end{itemize}
Note that the definitions do not depend on the choice of the birational model $Y$.
\end{defn}
 
 \noindent {\bf Convention.} If $S$ is a point or is clear from the context then we often omit "/$S$" and "$S$" in the notations above.

\subsection{Generalized polarized pairs}

\begin{defn}\label{def: gp}
A \emph{generalized polarized pair} over a scheme $S$, denoted by $(X/S,B+\bfM)$,  consists of 
\begin{itemize}[leftmargin=*]
\item a normal variety $X$ equipped with a projective morphism $X\rightarrow S$,
\item an effective $\RR$-divisor $B$ on  $X$, and
\item an $\RR$-Cartier $\rmb$-$\RR$-divisor $\bfM$ over $X$ that is nef/$S$,
\end{itemize}
such that $K_X+B+\bfM_X$ is $\RR$-Cartier.  
	\end{defn}
  We obtain a pair in the usual sense when $\bfM=0$. Vice versa, from a pair $(X/S, B)$, projective over a scheme $S$, together with a nef/$S$ $\RR$-Cartier $\RR$-divisor $M$, one obtains a generalized polarized pair $(X/S, B+\overline M)$.
  
  As before, if $S$ is a point then we omit it from the notation; in this case, $X$ is projective. Concerning the main results of the paper, we need to assume that $X$ is projective, and $S$ is tacitly taken to be a point. For example, the projectivity of $X$ is needed to apply the Riemann--Roch theorem in Section~\ref{sec: dim2}.

 One can define the \emph{generalized log discrepancy}  of a prime divisor $E$ over $X$ with respect to $(X/S, B+\bfM)$. Suppose $E$ is a divisor on a normal variety $\tilde X$ with a proper birational morphism $f\colon \tilde X \rightarrow X$. Write $K_{\tilde X}+B_{\tilde X}+\bfM_{\tilde X}=f^*(K_{X}+B+\bfM_X)$. Then the generalized log discrepancy of $E$ is (see \cite[Definition 4.1]{BZ16})
	\[
	a_E(X/S, B+\bfM)=1-{\rm mult}_EB_{\tilde X}.
	\]

We say that $(X,B+\bfM)$ is generalized log canonical (generalized lc, for short), resp.~generalized Kawamata log terminal (generalized klt, for short)  if the generalized log discrepancy of any prime divisor over $X$ is $\geq 0$, resp.~$>0$. For a prime divisor $E$ over $X$ with $a_E(X/S, B+\bfM)\leq 0$ we call its image $f(E)\subset X$  a \emph{generalized nonklt center}; the \emph{generalized nonklt locus} is the union of all generalized $\nonklt$ centers, denoted by $\Nklt(X,B+\bfM)$. A generalized lc pair $(X/S, B+\bfM)$ is \emph{generalized dlt} if for the generic point $\eta$ of any generalized nonklt center of $(X/S, B+\bfM)$, $(X, B)$ is log smooth\footnote{A pair $(X, B)$ is log smooth if $X$ is smooth and $B$ is a simple normal crossing divisor on $X$.}\,  near $\eta$ and $\bfM = \overline{\bfM}_X$ over a neighborhood of $\eta$. 

For any generalized lc pair $(X/S, B+\bfM)$, one can construct a \emph{$\QQ$-factorial generalized dlt modification}; see \cite[Lemma 4.5]{BZ16} and \cite[Proposition~3.9]{HL18}. Namely, there is a (projective) birational morphism $h\colon Y\rightarrow X$ and a $\QQ$-factorial generalized dlt pair $(Y/S, B_Y+\bfM)$ such that $h^*(K_X+B+\bfM_X) = K_Y +B_Y +\bfM_Y$, and $a_E(X/S, B+\bfM) = 0$ for any $h$-exceptional divisor $E$.

The minimal model program (MMP) can be defined for generalized polarized pairs (\cite{BZ16,HL18}). As part of the definition, one keeps the $\rmb$-$\RR$-divisor $\bfM$ unchanged in running an MMP.

Recall that a \emph{contraction morphism} is a surjective projective morphism between normal varieties with connected fibres; a \emph{birational contraction} is a birational map $\phi\colon X\dashrightarrow X'$ such that $\phi^{-1}$ does not contract any divisors.
\begin{defn}
Let $(X/S, B+\bfM)$ a generalized lc pair. A \emph{minimal model} of $(X/S, B+\bfM)$ is a $(K_X + B+\bfM_X)$-negative birational contraction $\phi\colon X \dashrightarrow X'$ over $S$ such that $(X'/S, \phi_* B + \bfM)$ is a generalized lc pair and $K_{X'} +\phi_*B+ \bfM_{X'})$ is nef over $S$. The minimal model is \emph{good} (resp.~\emph{numerically good}) if $K_{X'} +\phi_*B+ \bfM_{X'}$ is semiample (resp.~\emph{num-semiample}) over $S$.
\end{defn}

For surfaces we have the following observation.
\begin{lem}\label{lem: glc to lc}
Let $(X, B+\bfM)$ be a generalized lc (resp.~generalized klt) surface. Then  $(X, B)$ is an lc (resp.~klt) surface and $\bfM_X$ is a nef $\Rr$-Cartier $\RR$-divisor. 
\end{lem}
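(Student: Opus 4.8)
The plan is to isolate the one substantial point — that $\bfM_X$ is $\RR$-Cartier — and to derive everything else from it by comparing discrepancies. Fix a log resolution $h\colon Y\to X$ of $(X,B)$ to which $\bfM$ descends, and set $N:=\bfM_Y$, a nef $\RR$-Cartier $\RR$-divisor on $Y$. Since $K_X+B+\bfM_X$ is $\RR$-Cartier, we may write $K_Y+B_Y+N=h^*(K_X+B+\bfM_X)$; generalized lc (resp.~klt) means $\mult_E B_Y\le 1$ (resp.~$<1$) for every prime divisor $E$ over $X$, in particular for every $E$ on $Y$. Granting that $\bfM_X$ is $\RR$-Cartier, so is $K_X+B=(K_X+B+\bfM_X)-\bfM_X$, and we may write $K_Y+B_Y'=h^*(K_X+B)$. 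The divisor $N-h^*\bfM_X$ is $h$-exceptional and satisfies $(N-h^*\bfM_X)\cdot E=N\cdot E\ge 0$ for every $h$-exceptional curve $E$ (because $N$ is nef and $h^*\bfM_X$ is $h$-trivial), so the negativity lemma yields $h^*\bfM_X\ge N$. Hence $B_Y-B_Y'=h^*\bfM_X-N\ge 0$, so $\mult_E B_Y'\le\mult_E B_Y\le 1$ (resp.~$<1$) for every $E$ on the log resolution $Y$, which shows that $(X,B)$ is lc (resp.~klt). Moreover, for any curve $C$ on $X$ with strict transform $C'$ on $Y$ we get $\bfM_X\cdot C=h^*\bfM_X\cdot C'\ge N\cdot C'\ge 0$, since $C'$ is not a component of the effective exceptional divisor $h^*\bfM_X-N$; thus $\bfM_X$ is nef.

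It remains to prove that $\bfM_X$ is $\RR$-Cartier, which is a local question on $X$. At a smooth point, or at a rational singular point — where $X$ is $\QQ$-factorial, rational surface singularities being $\QQ$-factorial — every Weil $\RR$-divisor, in particular $\bfM_X$, is $\RR$-Cartier. Now let $p\in X$ be a non-$\QQ$-factorial point and $\pi\colon\widetilde{X}\to X$ the minimal resolution, with $g\colon Y\to\widetilde{X}$. Then $\bfM_{\widetilde{X}}:=g_*N$ is a Weil $\RR$-divisor on the smooth surface $\widetilde{X}$, hence $\RR$-Cartier, it is nef by the negativity argument above, and $K_{\widetilde{X}}+B_{\widetilde{X}}+\bfM_{\widetilde{X}}=\pi^*(K_X+B+\bfM_X)$. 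For each $\pi$-exceptional curve $E_i$ over $p$ one has $K_{\widetilde{X}}\cdot E_i\ge 0$ (minimality), $\bfM_{\widetilde{X}}\cdot E_i\ge 0$ (nefness) and $(\pi^{-1}_{*}B)\cdot E_i\ge 0$, so $(K_{\widetilde{X}}+B_{\widetilde{X}}+\bfM_{\widetilde{X}})\cdot E_i=0$ gives $B_{\widetilde{X}}\cdot E_i\le 0$. The divisor $Z:=\sum_i(1-\mult_{E_i}B_{\widetilde{X}})E_i$ (sum over exceptional curves over $p$) is effective because $(X,B+\bfM)$ is generalized lc; since $p$ is non-$\QQ$-factorial the dual graph over $p$ contains a cycle or a vertex of positive genus, and a direct computation of $Z\cdot E_i$ combined with the negativity lemma forces $Z=0$, so that $p$ is a simple elliptic or a cusp singularity, $\mult_{E_i}B_{\widetilde{X}}=1$, $\bfM_{\widetilde{X}}\cdot E_i=0$, and $(\pi^{-1}_{*}B)\cdot E_i=0$ for all $i$ over $p$. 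In particular $B$ does not pass through $p$; and since $p$ is simple elliptic or a cusp it is Gorenstein, so $K_X$ is Cartier near $p$. Therefore $\bfM_X=(K_X+B+\bfM_X)-K_X-B$ is $\RR$-Cartier near $p$, which completes the argument.

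The step I expect to be the real obstacle is the last one: showing that generalized log canonicity of $(X,B+\bfM)$ — where $\bfM_X$ is a priori only a Weil divisor — is strong enough to confine the underlying surface singularities at non-$\QQ$-factorial points to the short list (simple elliptic, cusp) and to keep $B$ away from them. This is the point where the interplay between the negativity lemma, the bound $\mult_{E_i}B_{\widetilde{X}}\le 1$, and the minimality of $\widetilde{X}$ is essential, together with the standard structure theory of log canonical surface singularities; once it is in place, the reduction in the first paragraph gives both that $(X,B)$ is lc (resp.~klt) and that $\bfM_X$ is a nef $\RR$-Cartier $\RR$-divisor.
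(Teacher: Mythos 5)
Your reduction in the first paragraph (once $\bfM_X$ is $\RR$-Cartier, negativity gives $h^*\bfM_X\ge N$, hence $B_Y'\le B_Y$ and lc/klt of $(X,B)$, and nefness of $\bfM_X$ by intersecting strict transforms) is sound and essentially parallels what the paper does. The difference is in the route to the crucial point, the $\RR$-Cartierness. The paper does \emph{not} try to show this directly; instead it uses Mumford's numerical pull-back of Weil $\RR$-divisors on a surface (available without any Cartierness hypothesis) to set $f^*(K_X+B)=K_{\tilde X}+B'_{\tilde X}$, shows $B'_{\tilde X}\le B_{\tilde X}$ by negativity, concludes that $(X,B)$ is \emph{numerically} lc (resp.\ klt), and then cites the classification of numerically lc surface singularities in~\cite[Section~4.1]{KM98}, which yields simultaneously that these singularities are (genuinely) lc and that $K_X+B$ is $\RR$-Cartier. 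Only afterwards does $\RR$-Cartierness of $\bfM_X$ follow by subtraction. You instead attempt a hands-on case analysis on the minimal resolution at non-$\QQ$-factorial points.

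The weak link in your version is the step ``since $p$ is non-$\QQ$-factorial the dual graph over $p$ contains a cycle or a vertex of positive genus.'' Over $\CC$, non-$\QQ$-factorial is equivalent to non-rational for normal surface singularities, but non-rational does \emph{not} imply that the dual graph of the minimal resolution contains a cycle or a positive-genus vertex: there are minimally elliptic (hence non-rational, hence non-$\QQ$-factorial) singularities whose exceptional curves form a tree of $\PP^1$'s, e.g.\ the triangle singularities of Laufer and Dolgachev. The statement you invoke is true \emph{once one knows the singularity is (numerically) log canonical}, but that is precisely what you are in the course of proving, so as written the argument is circular. If instead you first impose the constraints that come from the generalized lc hypothesis (namely $(K_{\widetilde X}+\bfM_{\widetilde X})\cdot E_i\ge 0$ and $\mult_{E_i}B_{\widetilde X}\le 1$) and \emph{then} run the intersection-theoretic analysis, your $Z\cdot E_i$ computation only gives $Z\cdot E_i\ge 0$ at vertices $E_i$ with $2p_a(E_i)-2+\sum_{j\ne i}E_j\cdot E_i\ge 0$, which fails at the ends of chains; so one still has to rule out dual graphs mixing cycles with attached trees, another nontrivial step. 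In short, your general strategy (reduce to the classification of lc surface singularities, identify simple elliptic and cusp points as the only possible non-$\QQ$-factorial ones, and use that $K_X$ is Cartier there) can be made to work, but the particular implication you assert is not a general fact and the required case checking is more delicate than your sketch suggests; the paper's route via Mumford numerical pull-back and the numerically-lc classification packages all of this into a single citation and avoids the circularity.
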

\begin{proof}
Let $f\colon \tilde X\rightarrow X$ be a  log resolution, to which $\bfM$ descends, so $\bfM=\overline{\bfM}_{\tilde X}$ and $\bfM_{\tilde X}$ is nef. Write $f^*(K_X+B+\bfM_X)= K_{\tilde X} + B_{\tilde X}+\bfM_{\tilde X}$. Since $(X, B+\bfM)$ is generalized lc (resp.~generalized klt), the coefficients of $B_{\tilde X}$ are at most 1 (resp.~less than 1). 

By the negativity lemma, we know that $f^*\bfM_{X} - \bfM_{\tilde X}\geq 0$, where $f^*$ is the numerical pull-back of $\RR$-divisors in the sense of Mumford. If we write $f^*(K_X+B)=K_{\tilde X}+B_{\tilde X}'$, then we have $B_{\tilde X}'\leq B_{\tilde X}$. It follows that $(X, B)$ is numerically lc (resp.~numerically klt). By the classification of numerically lc surface singularities, $(X, B)$ has lc (resp.~klt) singularities  (\cite[Section~4.1]{KM98}). 

Therefore, $\bfM_X=(K_X+B+\bfM_X)-(K_X+B)$ is $\Rr$-Cartier. The nefness of $\bfM_X$ follows from that of $\bfM_{X'}$, since $\bfM_X = f_*\bfM_{\tilde X}$ and we are on surfaces.
\end{proof}	

\begin{rmk}
Lemma~\ref{lem: glc to lc} does not generalize to higher dimensions. For example,  let $f\colon \tilde X\rightarrow X$ be a flipping contraction between projective normal varieties so that $\tilde X$ has klt singularities, $-K_{\tilde X}$ is $f$-ample, and $K_X$ is not $\QQ$-Cartier. Since $-K_{\tilde X}$ is $f$-ample, one can find a nef $\Qq$-Cartier $\QQ$-divisor $M_{\tilde X}$ such that $K_{\tilde X}+ M_{\tilde X} \sim_{\Qq, X} 0$. Let $\bfM=\overline{M}_{\tilde X}$. Then  $K_X+\bfM_X = f_*({K_{\tilde X} + M_{\tilde X}})$ is $\QQ$-Cartier and, by the negativity lemma, $ f^*(K_X+\bfM_X) ={K_{\tilde X} + M_{\tilde X}}$. It follows that $(X,\bfM)$ with $B=0$ is a generalized klt pair.  On the other hand,  neither $K_X$ nor $\bfM_X$ is $\Qq$-Cartier.
\end{rmk}

\medskip

\noindent {\bf Convention.} We often abuse the language to describe a generalized polarized pair $(X/S, B+\bfM)$ and its generalized log canonical divisor $K_X+B+\bfM_X$ in the same way. For example, we sometimes say $(X/S, B+\bfM)$ is numerically trivial when $K_X+B+\bfM_X$ is so; the divisor $K_X+B+\bfM_X$ is said to be generalized log canonical if $(X/S, B+\bfM)$ is so.

\subsection{Decomposition of generalized log canonical divisors with real coefficients}

Using Shokurov type polytopes, one can write nef (resp.~antinef) generalized log canonical divisors with $\RR$-coefficients as $\RR_{>0}$-linear combination of nef (resp.~antinef) generalized log canonical divisors with $\QQ$-coefficients (\cite{HL18, HLiu19}). These decompositions are used to reduce problems about generalized log canonical divisors with $\RR$-coefficients to those with $\QQ$-coefficients. We observe that, in order to have such a decomposition, the usual condition that the underlying variety is $\QQ$-factorial dlt as required in \cite{HL18, HLiu19}, can be dropped:

\begin{prop}\label{prop: RtoQ}
Let $(X/S,B+\bfM)$ be a generalized log canonical divisor over a scheme $S$ such that $\bfM =\sum_{1\leq j\leq m} \mu_j^0\bfM_j$ is an $\Rr_{>0}$-linear combination of nef/$S$ Cartier $\rmb$-divisors $\bfM_j$. If $K_X+B+\bfM_X$ is nef/$S$ (resp.~antinef/$S$), then there exist finitely many real numbers $c_{\alpha}>0$ with $\sum_{\alpha} c_{\alpha}=1$, $\QQ$-divisors $B^{\alpha}$ and nef/$S$ $\QQ$-Cartier $\rmb$-$\QQ$-divisors $\bfM^{\alpha}$ on $X$, such that
\begin{itemize}
\item[(i)] $(X/S,B^{\alpha}+\bfM^{\alpha})$ are generalized lc pairs with $\Nklt(X/S,B^{\alpha}+\bfM^{\alpha}) = \Nklt(X/S, B+\bfM)$;
\item[(ii)] $K_X+B= \sum_{\alpha} c_{\alpha} (K_X+B^{\alpha})$ and $\bfM=\sum_{\alpha} c_{\alpha} \bfM^{\alpha}$, and
\item[(iii)]$K_X+B^{\alpha} +\bfM^{\alpha}_{X}$ are nef/$S$ (resp.~antinef/$S$) $\Qq$-Cartier $\QQ$-divisors.
\end{itemize}
\end{prop}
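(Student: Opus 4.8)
The plan is to use a Shokurov-type polytope argument on the coefficients of $B$ and the coefficients $\mu_j^0$ of the nef $\rmb$-divisors $\bfM_j$. Concretely, let $D_1,\dots,D_s$ be the prime components of $B$ and consider the affine subspace $V\subset \RR^s\times\RR^m$ of tuples $(b_1,\dots,b_s,\mu_1,\dots,\mu_m)$ for which the corresponding divisor $K_X+B'+\bfM'_X$, with $B'=\sum b_i D_i$ and $\bfM'=\sum \mu_j\bfM_j$, is $\RR$-Cartier; this is a rational affine subspace because being $\RR$-Cartier is a condition cut out by finitely many rational linear equations (the $\QQ$-factoriality issue is irrelevant here since $\RR$-Cartierness of the specific divisor $K_X+B+\bfM_X$ is assumed, and we only vary along the affine space where this persists). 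Inside $V$, the locus $P$ of tuples giving generalized lc pairs with the same non-klt locus is a rational polytope: the generalized log discrepancy of each of the finitely many exceptional divisors appearing on a fixed log resolution to which all $\bfM_j$ descend is an affine-linear function of the $b_i$ and $\mu_j$ with rational coefficients, so the conditions "$a_E\ge 0$ for all $E$" and "$a_E\le 0$ exactly for $E$ over the given non-klt center" define $P$ by finitely many rational affine inequalities and equalities.

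Next I would handle the nef/antinef condition. The subset $N\subset P$ of tuples for which $K_X+B'+\bfM'_X$ is nef$/S$ (resp.\ antinef$/S$) is again a rational polytope: nefness is equivalent to nonnegativity of intersection with the finitely many generators of $\overline{NE}(X/S)$ — or, working on the model $Y$ where $\bfM$ descends, with a fixed finite set of extremal curves — and each such intersection number is an affine-linear rational function on $V$. Since the given point $p_0=(b_1^0,\dots,b_s^0,\mu_1^0,\dots,\mu_m^0)$ lies in $N$, I can write $p_0$ as a convex combination $p_0=\sum_\alpha c_\alpha p_\alpha$ of rational points $p_\alpha\in N$ with $c_\alpha>0$, $\sum c_\alpha=1$ (for instance, take the vertices of a small rational simplex in $N$ containing $p_0$ in its interior relative to $N$, or use that a point of a rational polytope is a convex combination of its rational vertices). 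Each $p_\alpha$ corresponds to a $\QQ$-divisor $B^\alpha$ and a nef$/S$ $\QQ$-Cartier $\rmb$-$\QQ$-divisor $\bfM^\alpha=\sum_j \mu_j^\alpha \bfM_j$, and by construction $(X/S,B^\alpha+\bfM^\alpha)$ is generalized lc with the prescribed non-klt locus, $K_X+B^\alpha+\bfM^\alpha_X$ is nef$/S$ (resp.\ antinef$/S$) and $\QQ$-Cartier, and the convex-combination identities in (ii) hold because $B'$ and $\bfM'$ depend affine-linearly on the parameters. This yields (i), (ii), (iii).

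The main obstacle I anticipate is verifying that the relevant conditions genuinely cut out \emph{rational} polytopes, i.e.\ checking that each defining function is affine-linear with rational coefficients and that one can restrict to a single birational model on which everything is simultaneously tracked. For the log-discrepancy conditions this requires fixing one high enough log resolution $f\colon \tilde X\to X$ to which every $\bfM_j$ descends and on which the non-klt centers of $(X,B+\bfM)$ are all seen; then $\mathrm{mult}_E B_{\tilde X}$ is visibly affine-linear in the $b_i$ and $\mu_j$. One subtlety is that varying the coefficients could in principle change \emph{which} divisors are the relevant ones (new log canonical places appearing), but restricting to the face of the polytope where $\Nklt$ is held fixed precisely excludes this, which is why the statement builds the equality $\Nklt(X/S,B^\alpha+\bfM^\alpha)=\Nklt(X/S,B+\bfM)$ into conclusion (i). For the nefness condition the point is that $\overline{NE}(X/S)$ need not be polyhedral in general, but nefness of an $\RR$-divisor is still tested against the extremal rays, and in the relevant situations (or after passing to $Y$) only finitely many rays matter for the family of divisors we consider; alternatively one invokes that the nef cone is locally rational polyhedral near the relevant classes, which is exactly the content already used in \cite{HL18,HLiu19} and which the proposition asserts goes through without the $\QQ$-factorial dlt hypothesis. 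Once these rationality checks are in place, the convex-decomposition step is routine.
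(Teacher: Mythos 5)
Your plan is a sensible Shokurov-polytope decomposition and correctly identifies the three loci to track (an $\RR$-Cartier affine subspace $V$, the generalized-lc polytope $P$, the nef/antinef subpolytope $N$), but it has a genuine gap exactly where you yourself flag uncertainty: you never establish that $N$ is a rational polytope. The polytope results you want to cite, \cite[Proposition~3.17]{HL18} and \cite[Proposition~2.7]{HLiu19}, are stated and proved for \emph{$\QQ$-factorial generalized dlt} pairs, and their proofs rest on the Cone Theorem/local rational polyhedrality in that setting. Your $X$ is neither assumed $\QQ$-factorial nor dlt, and the alternative of ``passing to the model $Y$ where $\bfM$ descends'' does not fix this either: such a $Y$ is a log resolution on which the crepant pullback of $B$ generally has negative coefficients, so $(Y, B_Y)$ is not a boundary pair and the dlt machinery still does not apply. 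Deferring to ``the proposition asserts this goes through without the $\QQ$-factorial dlt hypothesis'' is circular, since establishing precisely that relaxation is the content of the statement you are trying to prove.

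The paper's proof supplies the missing idea. It first passes to a $\QQ$-factorial generalized dlt modification $f\colon \tilde X\to X$ of $(X,B+\bfM)$, where $[HL18]$ and $[HLiu19]$ apply verbatim and give the rational polytope $P$ of nef (resp.~antinef) generalized lc parameters on $\tilde X$. The second key step, which you would also need even if your $N$ were known to be a polytope, is to ensure the decomposition descends to $X$: the paper uses \cite[Lemma~3.1]{HLiu19} to cut $P$ by a rational affine subspace $A\ni \rmv^0$ on which $K_{\tilde X}+\tilde B_\rmv+\bfM_{\rmv,\tilde X}\sim_{\QQ,f}0$, so that the pushforwards $K_X+B^\alpha+\bfM^\alpha_X=f_*(K_{\tilde X}+\tilde B_{\rmv^\alpha}+\bfM_{\rmv^\alpha,\tilde X})$ remain $\QQ$-Cartier on $X$ (this is close in spirit to, but more precise than, your $\RR$-Cartier locus $V$). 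Once $P\cap A$ is in hand, choosing rational vertices $\rmv^\alpha$ near $\rmv^0$ with positive barycentric weights is exactly the routine convex-combination step you describe. So your outline is correct modulo the two places where the dlt modification is doing real work; without it, the argument does not close.
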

\begin{proof}
Let $f\colon \tilde X\rightarrow X$ be a $\QQ$-factorial generalized dlt modification of $(X,B+\bfM)$, and let $\tilde B$ be the boundary divisor on $\tilde X$ such that $K_{\tilde X} +\tilde B + \bfM_{\tilde X} =f^*(K_X+B+\bfM_X)$. Then $\mult_{E} \tilde B =1$ for any $f$-exceptional divisor $E$. Let $\tilde B=\sum_{1\leq i \leq l} b_i^0 \tilde B_i + \sum_{k} E_k$ be the irreducible decomposition of $\tilde B$, where the $ \tilde B_i$ are the strict transforms of the components of $B$ and the $E_k$ are the $f$-exceptional divisors. By \cite[Proposition 3.17]{HL18} and \cite[Proposition~2.7]{HLiu19}, there exists a rational polytope $P$ in the $(l+m)$-dimensional $\RR$-vector space $V$ such that $\rmv=(b_1,\dots, b_l, \mu_1,\dots, \mu_m)\in V$ lies in $P$ if and only if  $(\tilde X, \tilde B_{\rmv} + \bfM_{\rmv})$ is a nef/$S$ (resp.~antinef/$S$) generalized lc pair, where $B_{\rmv}= \sum_i b_i \tilde B_i + \sum_k E_k$ and  $\bfM_{\rmv}=\sum_j \mu_j \bfM_j$. Note that $\tilde B =\tilde B_{\rmv^0}$ and $\tilde \bfM = \bfM_{\rmv^0}$ for $\rmv^0 = (b_1^0,\dots, b_l^0, \mu_1^0,\dots, \mu_m^0)$, and thus $\rmv^0\in P$. 

Note also that $$K_{\tilde X} + \tilde B_{\rmv^0}  + \bfM_{\rmv^0, \tilde X} = K_{\tilde X} +\tilde B + \bfM_{\tilde X} \sim_{\QQ, f} 0.$$ By the argument of \cite[Lemma~3.1]{HLiu19}, there is an affine subspace $A$ of $V$, defined over $\QQ$ and passing through $\rmv^0$, such that  $K_{\tilde X}+\tilde B_{\rmv} + \bfM_{\rmv}\sim_{\QQ, f} 0$ for any $\rmv\in A$. 

Now it is easy to see that there are rational vectors $\rmv^1, \dots, \rmv^p\in P\cap A$, which can be sufficiently close to $\rmv^0$, and positive real numbers $c_1,\dots, c_p$ such that 
\[
\sum_{1\leq \alpha\leq p} c_{\alpha} =1, \tilde B = \sum_{1\leq \alpha\leq p} c_{\alpha} \tilde B_{\rmv^{\alpha}}, \bfM = \sum_{1\leq \alpha\leq p} c_{\alpha} \bfM_{\rmv^{\alpha}}, 
\]
Moreover, for the rational coordinates $\rmv^0_i$ of  $\rmv^0$ we can make sure that $\rmv^\alpha_i = \rmv^0_i$ for each $1\leq \alpha\leq p$. Let $B^{\alpha} = f_*\tilde B_{\rmv^{\alpha}}$ and $\bfM^{\alpha}  = \bfM_{\rmv^{\alpha}}$. By construction, the generalized lc divisors $K_X+B^{\alpha} +\bfM^{\alpha}_{X}$ are $\QQ$-Cartier, and the data $c_{\alpha}, B^{\alpha}, \bfM^{\alpha}$ satisfy (i)-(iii) of the proposition.
\end{proof}


\section{Numerical nonvanishing in dimension two}\label{sec: dim2}

\subsection{Numerical nonvanishing for surfaces}

In this subsection, we prove Conjecture~\ref{conj: nonvanishing} in dimension two:

\begin{thm}\label{thm: nv}
Let $(X, B+\bfM)$ be a projective log canonical surface such that $\bfM$ is an $\RR_{> 0}$-linear combination of nef Cartier $\rmb$-divisors. If $K_X+B+\bfM_X$ is pseudo-effective, then there exists an effective $\RR$-Cartier $\RR$-divisor $D$ such that $K_X+B+\bfM_X\equiv D$. Moreover, if $B$ and $\bfM_X$ are $\QQ$-divisors then we can additionally require $D$ to be a $\QQ$-divisor.
\end{thm}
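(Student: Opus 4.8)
The plan is to reduce to the nef case, then to rational coefficients, and finally to run a trichotomy on the numerical dimension $\nu(L)$ of $L:=K_X+B+\bfM_X$; only $\nu(L)=1$ is substantial.

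\emph{Reductions and the easy cases.} By Lemma~\ref{lem: glc to lc} we may assume that $(X,B)$ is an lc surface and $\bfM_X$ is a nef $\RR$-Cartier $\RR$-divisor. Running a $(K_X+B+\bfM_X)$-MMP for the generalized lc surface $(X,B+\bfM)$ and using pseudo-effectivity of $L$ to exclude a Mori fibre space, we reach a minimal model; a standard common-resolution argument (the negativity lemma together with the projection formula for Mumford pull-backs on surfaces) then reduces the statement to the minimal model, so we may assume $L$ is nef. If $B$ and $\bfM_X$ are not $\QQ$-divisors, Proposition~\ref{prop: RtoQ} expresses $L$ as a finite positive convex combination of nef $\QQ$-Cartier generalized lc divisors, and it suffices to treat each summand; so we further assume $B$, $\bfM_X$ and $L$ are $\QQ$-Cartier. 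If $\nu(L)=0$, then $L\equiv 0$ and we are done. If $\nu(L)=2$, then $L$ is nef and big, so $h^0(X,\Oo_X(mL))\to\infty$ by asymptotic Riemann--Roch and $L$ is even $\QQ$-linearly equivalent to an effective divisor.

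\emph{The case $\nu(L)=1$.} Here $L^2=0$ and $L\not\equiv 0$. The key point — and the reason not to pass to a log resolution at this stage, but to keep the boundary effective — is that $0=L^2=L\cdot K_X+L\cdot B+L\cdot\bfM_X$ with $L\cdot B\ge 0$ and $L\cdot\bfM_X\ge 0$, so $L\cdot K_X\le 0$. If $L\cdot K_X<0$, then on a resolution $f\colon Y\to X$ one has $(f^*L)^2=0$ and $f^*L\cdot K_Y=L\cdot K_X<0$ (projection formula), so $\chi\big(Y,\Oo_Y(mf^*L)\big)=\chi(\Oo_Y)-\tfrac m2\,L\cdot K_X\to+\infty$, while $H^2\big(Y,\Oo_Y(mf^*L)\big)=H^0\big(Y,\Oo_Y(K_Y-mf^*L)\big)=0$ since $(K_Y-mf^*L)\cdot f^*L<0$ and $f^*L$ is nef; hence $H^0(X,\Oo_X(mL))\ne 0$ for $m\gg 0$ and $L$ is $\QQ$-linearly equivalent to an effective divisor.

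\emph{The main obstacle: $\nu(L)=1$ and $L\cdot K_X=0$.} Then $L\cdot B=L\cdot\bfM_X=0$, so the Hodge index theorem (applied on a resolution, where the orthogonal complement of $f^*L$ is negative semidefinite with radical $\RR f^*L$) together with nefness of $\bfM_X$ forces $\bfM_X^2=0$ and $\bfM_X\equiv\lambda L$ for some $\lambda\ge 0$; hence $K_X+B\equiv(1-\lambda)L$. If $\lambda<1$, then $K_X+B$ is numerically a positive multiple of the nef divisor $L$, so it is nef, and log abundance in dimension two gives that $K_X+B$ is semiample, in particular $\QQ$-linearly equivalent to an effective $\QQ$-divisor; then so is $L\equiv\tfrac1{1-\lambda}(K_X+B)$. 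If $\lambda\ge 1$, then $-(K_X+B)\equiv(\lambda-1)L$ is nef and $L$ is a nonnegative multiple of $\bfM_X$, so it suffices to prove: a nef divisor $N$ on a projective lc surface $(X,C)$ with $N\equiv-(K_X+C)$ and $\nu(N)\le 1$ is numerically equivalent to an effective divisor. I would handle this by running a $(K_X+C)$-MMP — since $K_X+C$ is antinef, every contracted extremal ray is $N$-positive, a Picard-number-one output would force $\nu(N)=2$, and a divisorial contraction makes the pushed-forward divisor big (so, via a transfer argument using bigness, harmless) — reducing to the case where $X$ is a conic bundle over a curve $Z$ (or, when $N\equiv 0$, a generalized log Calabi--Yau surface). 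There one argues by the genus of $Z$ and by $\chi(\Oo_X)$: if $N$ is numerically pulled back from $Z$ it is clearly num-effective; $g(Z)=0$ is settled by Riemann--Roch as above; for $g(Z)=1$ one identifies $N$, up to a positive multiple, with the tautological class of the $\PP^1$-bundle attached to a semistable degree-zero bundle on the elliptic curve $Z$, which by Atiyah's classification always carries a section in that class; and $g(Z)\ge 2$ is excluded, because effectivity of $C$ (or the canonical bundle formula, with $\deg K_Z>0$) is incompatible with nefness of $N$. Throughout, the constructed divisors are rational whenever $B$ and $\bfM_X$ are ($\lambda$ is then rational, Riemann--Roch produces genuine sections, and abundance produces a $\QQ$-divisor), which gives the last sentence of the theorem. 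I expect this final regime — numerical dimension one with $L\cdot K_X=0$, equivalently $-(K_X+B)$ nef — to be the real difficulty, since pseudo-effectivity of $L$ offers no Zariski-decomposition leverage and one is pushed into the classification of surfaces; it is also exactly here that hypothesis (ii) on $\bfM$ cannot be dispensed with, cf.\ Example~\ref{rem: reasonNQC}.
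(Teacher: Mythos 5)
Your reductions (to a nef generalized lc divisor, then to $\QQ$-coefficients via Proposition~\ref{prop: RtoQ}), the Riemann--Roch elimination of $\nu(L)\geq 2$ and $L\cdot K_X<0$, and the Hodge-index trichotomy $\bfM_X\equiv\lambda L$ with $K_X+B\equiv(1-\lambda)L$ all match the paper's proof, and the case $\lambda<1$ (that is, $K_X+B$ nef) is handled identically. The divergence, and the gap, is in the remaining case $\lambda\geq 1$. Your claim that it ``suffices to prove that a nef $N\equiv-(K_X+C)$ with $\nu(N)\leq 1$ is num-effective'' is vacuous when $\lambda=1$: there $N=-(K_X+B)\equiv 0$ is trivially num-effective, yet what one actually needs is that $L\equiv\bfM_X$ is num-effective, and nothing in the proposed lemma speaks to that; your parenthetical ``(or, when $N\equiv 0$, a generalized log Calabi--Yau surface)'' flags this subcase but does not resolve it. More seriously, the plan for $\lambda>1$---run a $(K_X+C)$-MMP, noting that every contracted ray is $N$-positive---runs into a genuine transfer problem at divisorial contractions. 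If $\pi\colon X\to X'$ contracts a curve $E$ with $N\cdot E>0$ and $E^2<0$, then $\pi^*(\pi_*N)=N+aE$ with $a>0$, and indeed $(\pi_*N)^2>0$ so $\pi_*N$ is big; but $N=\pi^*(\pi_*N)-aE$, and num-effectivity of the big divisor $\pi_*N$ does not yield num-effectivity of $N$ without producing a numerically equivalent effective divisor that passes through $\pi(E)$ with controlled multiplicity. The naive dimension count is borderline (both $h^0(m\pi_*N)$ and the vanishing conditions at $\pi(E)$ grow like $a\,(N\cdot E)\,m^2/2$), and for irrational surfaces---exactly the surviving regime after Riemann--Roch, since $\chi(\mathcal O_X)\leq 0$---there is no soft argument. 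The ``transfer argument using bigness'' would need to be made precise, and I do not see how to do it in this generality.

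The paper avoids both issues by a different scaling. Instead of running a $(K_X+B)$-MMP on the antinef side, it sets $t_0=\inf\{t\geq 0\mid K_X+tB+\bfM_X \text{ is pseudo-effective}\}$ and, when $t_0>0$, runs a $(K_X+t_1B+\bfM_X)$-MMP for $t_1$ just below $t_0$. By \cite[Lemma 3.18]{HL18} this MMP is $(K_X+t_0B+\bfM_X)$-trivial, so the relevant divisor pulls back \emph{verbatim} along each step---there is no transfer problem---and it terminates in a Mori fibre space $Y\to Z$ over a curve or a point, where $K_Y+t_0B_Y+\bfM_Y$ is a pull-back from $Z$ and num-effectivity is immediate. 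When $t_0=0$ the paper classifies the minimal model of $K_X+\bfM_X$ (a ruled surface over an elliptic curve) and uses num-effectivity of $-K_X$ there. This is the part of your proposal that would need to be replaced or substantially repaired: the MMP you choose does not come with the triviality statement that makes the descent harmless, and the $N\equiv 0$ (equivalently $K_X+B\equiv 0$) subcase needs its own argument. Your conic-bundle/elliptic-curve analysis is in the right spirit---it resembles Lemma~\ref{lem: elliptic ruled surface} of the paper---but it is invoked after a reduction that has not been justified.
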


We will use the following lemmas.
\begin{lem}\label{lem: Hodge}
Let $X$ be a normal projective surface. Let $A$ and $B$ two nef $\Rr$-Cartier $\RR$-divisors satisfying $A\cdot B=0$. Then numerical classes of $A$ and $B$ in $N_1(X)_\RR=N^1(X)_\RR$ are proportional to each other.
\end{lem}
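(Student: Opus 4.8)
The plan is to use the Hodge index theorem on a smooth resolution, combined with a limiting argument to handle the case where the two nef divisors are not assumed to be "interior" to the nef cone. First I would take a resolution $\pi\colon Y \to X$ with $Y$ a smooth projective surface, and replace $A$, $B$ by their pullbacks $\pi^*A$, $\pi^*B$, which are again nef $\RR$-divisors with $\pi^*A \cdot \pi^*B = A\cdot B = 0$; since $\pi^*$ is injective on $N^1(-)_\RR$, proportionality upstairs is equivalent to proportionality downstairs. So it suffices to prove the statement on a smooth surface $Y$, where the intersection form on $N^1(Y)_\RR$ has signature $(1, \rho-1)$ by the Hodge index theorem.

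The key step is the following. A nef divisor $N$ on $Y$ satisfies $N^2 \ge 0$ (nef divisors lie in the closure of the positive cone, or more directly: $N\cdot H > 0$ for an ample $H$ unless $N\equiv 0$, and one applies the Hodge index inequality $N^2 H^2 \le (N\cdot H)^2$ the other way — actually the cleanest route is that a nef divisor is a limit of divisors $N + \epsilon H$ with $(N+\epsilon H)^2 > 0$, hence $N^2 \ge 0$). Now split into cases. If $A^2 > 0$, then $A$ lies in the positive cone; since $A\cdot B = 0$ and $B$ is nef with $B^2 \ge 0$, the Hodge index theorem forces $B^2 = 0$ and then $B \equiv 0$ (a class $B$ with $B^2 = 0$ and $A\cdot B = 0$ for $A$ in the positive cone must be numerically trivial, because the orthogonal complement of a positive-square class is negative definite). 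Hence $B = 0\cdot A$ up to numerical equivalence — actually here $B\equiv 0$, still proportional. Symmetrically if $B^2 > 0$. The remaining case is $A^2 = B^2 = 0$. Then consider any ample class $H$; if both $A\cdot H = 0$ and $B\cdot H = 0$ we'd get $A\equiv B\equiv 0$, done. Otherwise, WLOG $A\cdot H \neq 0$, and set $C = (B\cdot H)A - (A\cdot H)B$. Then $C\cdot H = 0$, and $C^2 = (B\cdot H)^2 A^2 - 2(A\cdot H)(B\cdot H)(A\cdot B) + (A\cdot H)^2 B^2 = 0$ using $A^2 = B^2 = A\cdot B = 0$. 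A class $C$ with $C^2 = 0$ and $C\cdot H = 0$ for an ample $H$ must be numerically trivial (again because $H^\perp$ is negative definite, so $C^2 = 0$ there forces $C\equiv 0$). Therefore $(B\cdot H)A \equiv (A\cdot H)B$, which is the desired proportionality (the coefficient $A\cdot H$ is nonzero).

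The main obstacle, such as it is, is being careful about the degenerate case $A^2 = B^2 = 0$ and about the possibility that one or both divisors are numerically trivial — "proportional" should be read as "linearly dependent in $N^1(Y)_\RR$", which includes the case where one is zero. Everything else is a direct application of the Hodge index theorem (negative definiteness of $H^\perp$) plus the elementary fact that nef divisors have nonnegative self-intersection; no hard input is needed. One should also note at the outset that $N^1$ and $N_1$ are identified on a surface via the intersection form, so stating the conclusion in $N_1(X)_\RR$ versus $N^1(X)_\RR$ is the same thing, as the lemma already indicates.
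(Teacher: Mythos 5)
Your proof is correct and follows essentially the same approach as the paper: pass to a smooth resolution, then use the Hodge index theorem (negative definiteness on the orthogonal complement of an ample class) to show that a suitable real linear combination of $A$ and $B$ orthogonal to an ample $H$ has square zero and is therefore numerically trivial. If anything you are slightly more careful than the paper's terse version, since you explicitly reduce to the case $A^2 = B^2 = 0$ before forming the combination $C = (B\cdot H)A - (A\cdot H)B$, a step the paper leaves implicit when it asserts $(A-aB)^2 = 0$.
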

\begin{proof}
By replacing $X$ with a resolution, and $A$ and $B$ with their pull-backs, we may assume that $X$ is smooth. We may also assume that neither $A$ nor $B$ are numerically trivial. Let $H$ be an ample divisor on $X$. By the Hodge index theorem, $H\cdot A>0$ and $H\cdot B>0$, so there is a positive real number $a$ such that $H\cdot A=aH\cdot B$. It follows that $H\cdot(A-aB) =0$. Since $(A-aB)^2=A^2-2aA\cdot B + B^2=0$, by the Hodge index theorem again, $A\equiv aB$. 
\end{proof}

\begin{lem}\label{lem: rational}
Let $X$ be a projective lc surface. If $\chi(\mo_X)\leq 0$ then $X$ has at most rational singularities and hence is $\QQ$-factorial.
\end{lem}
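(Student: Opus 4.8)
The plan is to reduce the statement to the classification of surface singularities and a Riemann--Roch/Leray-spectral-sequence argument on a resolution. First I would take a minimal resolution $\pi\colon \tilde X\to X$, so that $X$ has rational singularities if and only if $R^1\pi_*\mo_{\tilde X}=0$; since surface quotient and more generally log terminal singularities are rational, and since rational surface singularities are automatically $\QQ$-factorial (see \cite[Section~4.1, Theorem~4.6 and its discussion]{KM98}), it suffices to rule out the possibility that $X$ carries a non-rational (in particular a properly elliptic, cusp, or simple-elliptic) singularity, equivalently that $\pi$ has a fibre $Z$ with $h^1(\mo_Z)>0$ in its fundamental cycle, \emph{or} that the global geometry forces $h^1(\mo_{\tilde X})$ to be large. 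The Leray spectral sequence for $\pi$ gives the exact sequence $0\to H^1(\mo_X)\to H^1(\mo_{\tilde X})\to H^0(R^1\pi_*\mo_{\tilde X})\to H^2(\mo_X)$, and $\chi(\mo_{\tilde X})=\chi(\mo_X)-h^0(R^1\pi_*\mo_{\tilde X})$ since $R^1\pi_*\mo_{\tilde X}$ is supported on points. Hence $\chi(\mo_X)\le 0$ forces $\chi(\mo_{\tilde X})\le 0$, i.e.\ $1-q(\tilde X)+p_g(\tilde X)\le 0$, so $q(\tilde X)=h^1(\mo_{\tilde X})\ge 1$.

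Next I would exploit that $X$ is log canonical: running the argument through a log resolution and using the classification of lc (equivalently, of the numerically lc, cf.\ the proof of Lemma~\ref{lem: glc to lc}) surface singularities, the only lc singularities that are \emph{not} rational are the simple elliptic and cusp singularities (and their quotients), which are Gorenstein elliptic. So I may assume $X$ has at least one such singularity and derive a contradiction with $\chi(\mo_X)\le 0$. The mechanism is the standard dichotomy used in the minimal model program for surfaces: either $K_{\tilde X}$ is not pseudo-effective, in which case $\tilde X$ is birationally ruled and one analyses $\chi$ directly via the ruled surface structure and the location of the non-rational singular fibres; or $K_{\tilde X}$ is pseudo-effective, in which case running a $K_{\tilde X}$-MMP (contracting $(-1)$-curves) preserves $\chi(\mo)$, and on the minimal model $Y$ one has $K_Y^2\ge 0$ and Noether's inequality / the Enriques--Kodaira classification bound $\chi(\mo_Y)\ge 1$ for non-ruled $Y$, contradicting $\chi(\mo_Y)=\chi(\mo_{\tilde X})=\chi(\mo_X)\le 0$. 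The remaining case, $Y$ ruled over a curve of genus $g\ge 1$, gives $\chi(\mo_Y)=1-g\le 0$, and here one has to check that a non-rational singularity of $X$ cannot be "absorbed" without violating the lc condition — essentially because a cusp or simple elliptic singularity on the lc surface $X$ contributes a non-rational configuration that cannot sit over a ruled surface of irregularity exactly matching $\chi(\mo_X)$ while keeping all discrepancies $\ge -1$; a direct local computation of the minimal resolution graph (a cycle of rational curves, or an elliptic curve) against the adjunction on the ruling rules this out.

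I expect the main obstacle to be precisely this last bookkeeping step: cleanly separating the \emph{local} contribution of a non-rational lc singularity to $h^0(R^1\pi_*\mo_{\tilde X})$ from the \emph{global} contribution of the irregularity $q(\tilde X)$, and showing the two cannot conspire to keep $\chi(\mo_X)\le 0$ while a non-rational singularity is present. A cleaner route, which I would actually try first, avoids the MMP case analysis: assume for contradiction that $X$ has a non-rational singularity; then $R^1\pi_*\mo_{\tilde X}\ne 0$, so $h^0(R^1\pi_*\mo_{\tilde X})\ge 1$, whence $\chi(\mo_{\tilde X})\le \chi(\mo_X)-1\le -1$, giving $q(\tilde X)\ge p_g(\tilde X)+2\ge 2$. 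Combined with the fact that $\tilde X$ then has a non-rational singular fibre over a point, one gets that $\tilde X$ dominates a curve of genus $\ge 2$ only if it is of general type or properly elliptic, and in every case of the Enriques--Kodaira classification $q>0$ together with the presence of an elliptic/cusp configuration forces $p_g\ge q-1$, i.e.\ $\chi(\mo_{\tilde X})\ge 0$ — the desired contradiction. Once $X$ has only rational singularities, $\QQ$-factoriality is immediate from \cite[Section~4.1]{KM98}. The write-up should be short; the content is entirely the surface classification plus Leray, and I would cite \cite{KM98} (and, if convenient, standard references on elliptic surface singularities) rather than reprove anything.
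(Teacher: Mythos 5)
Your ``cleaner route'' is essentially the paper's proof: take a resolution $\pi\colon\tilde X\to X$, use $\chi(\mo_{\tilde X})=\chi(\mo_X)-h^0(R^1\pi_*\mo_{\tilde X})$ to conclude $\chi(\mo_{\tilde X})<0$ if a non-rational lc singularity is present, then invoke the Enriques classification to see that $\tilde X$ is birationally ruled over a curve $C$ of genus $\geq 2$, and derive a contradiction from the presence of the exceptional configuration. The paper uses the slightly sharper equality $h^0(R^1\pi_*\mo_{\tilde X})=k$ (the number of elliptic singularities, each of geometric genus $1$), but your weaker $\geq 1$ suffices, so that part is fine. Your first route (the MMP case analysis with Noether's inequality) is unnecessarily heavy for a statement this local and you rightly flag the bookkeeping at the end as the obstacle; the paper does not go that way.

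The genuine gap is in the final step of your cleaner route: the sentence ``$q>0$ together with the presence of an elliptic/cusp configuration forces $p_g\geq q-1$'' is not justified and, as stated, is not the right mechanism (indeed $p_g=0$ and $q=g(C)$ for a ruled surface, so that inequality is exactly what you want to \emph{fail}, not hold, and you give no reason for it). Also ``$\tilde X$ dominates a curve of genus $\geq 2$ only if it is of general type or properly elliptic'' is false: a ruled surface over a genus $\geq 2$ curve dominates such a curve. The correct and short argument is geometric: the exceptional configuration of a non-rational lc surface singularity is either a smooth elliptic curve or a cycle of $\PP^1$s, hence has arithmetic genus $1$. But on a surface birationally ruled over a curve $C$ of genus $\geq 2$, any irreducible curve either dominates $C$ (so has geometric genus $\geq 2$, ruling out a smooth elliptic component) or sits inside a fibre of the $\PP^1$-fibration; and all fibres of a $\PP^1$-fibration on a smooth surface are trees of $\PP^1$s, so any connected vertical configuration has arithmetic genus $0$, ruling out both the elliptic curve and the cycle. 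This is the contradiction the paper's terse ``it follows that $X$ cannot have elliptic singularities'' is pointing to, and it should replace your $p_g\geq q-1$ step.
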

\begin{proof}
Let $f\colon\tilde X\rightarrow X$ be a resolution of singularities. Suppose there are $k>0$ elliptic singularities on $X$. Since $\chi(\mo_X)\leq 0$ by assumption, we have
$$\chi(\mo_{\tilde X}) = \chi(\mo_X) - h^0(X, R^1f_*\mo_{\tilde X}) = \chi(\mo_X) -k<0.$$ 
Therefore, $\tilde X$ is birational to a ruled surface over a curve of genus at least $2$ and it follows that $X$ cannot have elliptic singularities, which is a contradiction to the assumption.

For the second half of the statement, we note that rational normal surface singularities are $\QQ$-factorial by \cite[Lemma~2.12]{Na04}.
\end{proof}

\begin{proof}[Proof of Theorem~\ref{thm: nv}]
First we make some preparatory reductions. After running a $(K_X+B+\bfM_X)$-MMP, which is automatically a $(K_X+B)$-MMP, we may assume that $K_X+B+\bfM_X$ is nef. By Proposition~\ref{prop: RtoQ}, we can assume that $B$ has $\QQ$-coefficients and $\bfM$ is a $\QQ_{>0}$-linear combination of nef Cartier $\rmb$-divisors. Finally, we can assume that neither $\bfM_X$ nor $K_X+B+\bfM_X$ is numerically trivial.

Let $m\geq 2$ be a positive integer such that $m(K_X+B+\bfM_X)$ is Cartier. The usual Riemann--Roch formula holds for Cartier divisors on normal projective surfaces (\cite{Bl95}):
\begin{multline*}\label{eq: RR}
h^0(X, m(K_X+B+\bfM_X))-h^1(X, m(K_X+B+\bfM_X)) + h^2(X, m(K_X+B+\bfM_X))  \\
= \chi(\mo_X) + \frac{1}{2}\left(m(m-1)(K_X+B+\bfM_X)^2 + m(K_X+B+\bfM_X)\cdot (B+\bfM_X)\right)
\end{multline*}
By the Serre duality we have $$h^2(X, m(K_X+B+\bfM_X))  = h^0(X, -B-\bfM_X-(m-1)(K_X+B+\bfM_X))=0.$$ It follows that $h^0(X, m(K_X+B+\bfM_X))>0$ for sufficiently large and divisible $m$ unless $\chi(\mo_X)\leq 0$ and $(K_X+B+\bfM_X)^2 = (K_X+B+\bfM_X)  \cdot (B+\bfM_X)=0$. Since $K_X+B+\bfM_X$ and $\bfM_X$ are nef, we obtain $$(K_X+B+\bfM_X) \cdot K_X = (K_X+B+\bfM_X) \cdot B =(K_X+B+\bfM_X)\cdot \bfM_X = 0.$$ By Lemma~\ref{lem: Hodge}, the divisor $K_X+B+\bfM_X$, and hence $K_X+B$, is numerically proportional to $ \bfM_X$. It follows that either $K_X+B$ or $-(K_X+B)$ is nef.  If $K_X+B$ is nef and not numerically trivial, then there is a postive rational number $t$ such that $K_X+B+\bfM_X \equiv t(K_X+B)$ which is num-effective by the nonvanishing for nef log canonical divisors in dimension two.

From now on, we can assume that $\chi(\mo_X)\leq 0$, $K_X+B \equiv -a \bfM_X$ from some $a\in (0,1)\cap\QQ$, and  $ \bfM_X^2 =  \bfM_X\cdot  B =0$. 

Let $f\colon\tilde X\rightarrow X$ be the minimal resolution and write $f^*(K_{ X}+{ B}+\bfM_X) = K_{\tilde X} + B_{\tilde X} +\bfM_{\tilde X}$.  Since $\chi(\mo_X)\leq 0$, $X$ is $\QQ$-factorial by Lemma~\ref{lem: rational}. Hence it suffices to show that $K_{\tilde X} + B_{\tilde X} +\bfM_{\tilde X}$ is num-effective. We can thus replace $(X, B+\bfM)$ with $(\tilde X, B_{\tilde X}+\bfM)$, and assume that $X$ is smooth.

Now define the pseudo-effective threshold $$t_0:=\inf\{t\geq 0 \mid \, K_X+tB+ \bfM_X\text{ is pseudo-effective}\}.$$ We divide the discussion according to whether $t_0=0$ or $t_0>0$.

\medskip

\noindent{\bf Case 1:} $t_0=0$, so $K_X+\bfM_{X}$ is pseudo-effective. It suffices to show that $K_X+\bfM_X$ is num-effective. After running a $(K_X+\bfM_X)$-MMP, which contracts only $(-1)$-curves, we can assume that $K_X+\bfM_X$ is nef. As before, using the Riemann--Roch theorem, we can reduce to the case where $\chi(\mo_X)\leq 0$, $K_X\equiv-b\bfM_X$ for some $b\in(0,1)\cap\QQ$, and $K_X^2  = \bfM_X^2=0$. Then $X$ is necessarily a (minimal) ruled surface over an elliptic curve. In this case,  $-K_X$ is num-effective (cf.~\cite[Example 1.1]{Sh00}). Therefore $K_X+\bfM_X\equiv -(\frac{1}{b}-1)K_X$ is also num-effective.


\medskip

\noindent{\bf Case 2:} $t_0>0$. In this case, we run a $(K_X+t_1B + \bfM_X)$-MMP for $0<t_1<t_0$ and $t_0-t_1$ sufficiently small, and it  necessarily terminates to a Mori fibre space $h\colon Y\rightarrow Z$ of $K_Y+t_1B_Y +\bfM_Y$, where $B_Y$ is the strict transform of $B$ on $Y$. According to \cite[Lemma 3.18]{HL18}, the $(K_X+t_1B + \bfM_X)$-MMP is $(K_X+t_0 B+\bfM_X)$-trivial. 

Let $F$ be the general fiber of $Y\to Z$. We have
$$(K_Y+t_0 B_Y+\bfM_Y)|_{F}\coloneqq K_F+t_0 B_F+M_F\equiv 0.$$
By taking the degree, we know that $t_0\in\Qq$. 

If $\dim Z =0$ then $K_Y+t_0 B_Y+\bfM_Y\sim_{\QQ} 0$. If $\dim Z= 1$, then $K_{Y}+t_0 B_Y+\bfM_Y\sim_{\QQ}h^{*}L$ for some divisor $L$ of nonnegative degree on $Z$. It follows that $K_{Y}+t_0 B_Y+\bfM_Y$ and hence $K_X+t_0 B+\bfM_X$ is num-effective.
\end{proof}

\begin{proof}[Proof of Corollary \ref{cor: antinef}]
By Proposition~\ref{prop: RtoQ} we can assume that $B$ is a $\Qq$-divisor. Setting $M:=-2(K_X+B)$, there is a nef effective $\QQ$-divisor $D$ such that $-(K_X+B)=K_X+B+M\equiv D$ by Theorem~\ref{thm: nv}.

If $-(K_X+B)\equiv 0$ then $K_X+B\sim_{\QQ} 0 $ by the abundance of $K_X+B$; see, for example, \cite{Fu12}.  To the other extreme, if $-(K_X+B)$ is big, then it is necessarily $\QQ$-linearly equivalent to an effective $\QQ$-divisor. Also, if $h^1(X, \mo_X)=0$ then the numerical equivalence for $\QQ$-Cartier $\QQ$-divisors on $X$ is the same as the $\QQ$-linear equivalence, so $-(K_X+B)\sim_{\QQ} D$. 

In the following, we assume that $-(K_X+B)\equiv D$ is nef but is neither numerically trivial nor big, and $h^1(X, \mo_X) >0$. Let $h\colon \tilde X\rightarrow X$ be the minimal resolution and write $h^*(K_X+B) = K_{\tilde X}+B_{\tilde X}$. Then $h^1(\tilde X, \mo_{\tilde X}) >0$ and it suffices to show that  $-(K_{\tilde X} + B_{\tilde X})\sim_{\QQ} \tilde D$ for some effective $\QQ$-divisor $\tilde D$. By replacing $(X, B)$ with $(\tilde X, B_{\tilde X})$ we can thus assume that $X$ is a smooth projective surface. The Albanese map induces  a $\PP^1$-fibration $f\colon X\rightarrow C$ onto a curve of positive genus. Moreover, by contracting $(-1)$-curves $E$ with $-(K_X+B)\cdot E=0$ we can assume that $-(K_X+B)$ intersects any $(-1)$-curve positively.

Let $D=\sum d_i D_i$ be the decomposition into irreducible components.  Since $D$ is nef but not big, we have $D\cdot D_i\geq 0$ for each $i$ and $ D^2= \sum_i (1/d_i)D\cdot D_i =0$, and hence $D\cdot D_i =0$ for each $i$. It follows that
\begin{equation}\label{eq: negative H1}
D_i^2  = \frac{1}{d_i}\left(D\cdot D_i - \sum_{j\neq i} d_jD_j \cdot D_i\right) \leq  \frac{1}{d_i} \left(D\cdot D_i\right) = 0.
\end{equation}

If $D\cdot F=0$, where $F$ is a general fibre of $f\colon X\rightarrow C$, then $D = f^* A$ for some effective $\QQ$-divisor $A$ on $C$ of positive degree. It follows that $-(K_X+B)$ is $\QQ$-linearly equivalent to the pull-back of an effective divisor $A'\equiv A$ on $C$.


From now on, we assume furthermore that $D\cdot F >0$. Consider any irreducible curve $H$, horizontal with respect to $f$, such that $H^2\leq 0$. (For example, any horizontal component of $D$ is such a curve.) Denote $b:=\mult_{H}B$. Then $b\leq 1$ and
\begin{equation*}
0\leq -(K_X+B) \cdot H  = -(K_X+H) \cdot H + (1-b) H^2  - (B-bH)\cdot H\leq 0
\end{equation*}
It follows that 
\begin{equation}\label{eq: negative H}
-(K_X+H) \cdot H = (1-b) H^2  = (B-bH)\cdot H = 0,
\end{equation}
thus $p_a(H) =1$ and $H$ does not intersect any other component of $B$. Since $f|_{H}\colon H\rightarrow C$ is dominant and $g(C)\geq 1$, we infer that $H$ and $C$ are both smooth elliptic curves.

If there is a horizontal curve $H$ such that $H^2<0$ then \eqref{eq: negative H} implies that $b=1$. Since $-(K_X+B)\cdot F = D\cdot F>0$ for a fibre $F$ of $f\colon X\rightarrow B$, we obtain
\[
H \cdot F\leq B\cdot F < -K_X\cdot F =2,
\]
thus $H\cdot F=1$. In other words, $H$ is a section of $f\colon X\rightarrow C$ with negative self-intersection. Then there is a birational morphism $\pi\colon X\rightarrow \bar X$ contracting exactly $H$ to a simple elliptic singularity. Let $B_{\bar X} = \pi_* B$. Then $(\bar X,B_{\bar X})$ is log canonical with $\pi^*(K_{\bar X}+B_{\bar X}) = K_X+B$, which is antinef. It holds $h^1(\bar X, \mo_{\bar X})=0$, so  $-(K_{\bar X}+B_{\bar X})$ is $\QQ$-linearly equivalent to an effective $\QQ$-Cartier $\QQ$-divisor $\bar D$ on $\bar X$ as before. It follows that $$-(K_X+B)\sim_{\QQ} \pi^* \bar D\geq 0.$$ 

Now we can assume that any horizontal curve $H$ of $f\colon X\rightarrow C$ has $H^2 \geq 0$. Let $\varphi\colon X\rightarrow X_m$ be the birational morphism to a smooth model $X_m$ without $(-1)$-curves. We will show that $\varphi$ is an isomorphism. Note that the natural morphism $f_m\colon X_m\rightarrow C$ is a $\PP^1$-bundle, all of whose sections have nonnegative self-intersection. Let $B_{m}=\varphi_* B$ and $D_m=\varphi_* D$. Then $-(K_{X_m} + B_m)\equiv D_m$ is nef. The class of $-K_{X_m}$ spans one boundary ray $R$ of $\overline{NE}(X)$ (see \cite[1.5.A]{L04} or \cite{Sh00}) and the classes of $B_m$ and $D_m$ lie necessarily on $R$. We have thus $D_m^2 = K_{X_m}^2 = 0$. Since $D^2= D_m^2 =0$, the morphism $\varphi\colon X\rightarrow X_m$ cannot blow up any point on $D_m$. Also, the fact that $D\cdot E>0$ for any $(-1)$-curve $E$ on $X$ implies that $\varphi$ does not blow up points outside $D_m$ either. This implies that $X=X_m$. We conclude by Lemma~\ref{lem: elliptic ruled surface} below.
 \end{proof}

\begin{lem}\label{lem: elliptic ruled surface}
Let $f \colon X\rightarrow C$ be a ruled surface over an elliptic curve $C$ such that any section of $f$ has nonnegative self-intersection. Let $B$ be a boundary $\RR$-divisor on $X$ such that $(X, B)$ is log canonical. If $-(K_X+B)$ is nef then $-(K_X+B)\sim_{\RR} D$ for some effective $\RR$-Cartier $\RR$-divisor $D$.
\end{lem}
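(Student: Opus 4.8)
The plan is to analyze the structure of ruled surfaces over an elliptic curve with the stated positivity constraint on sections, and to reduce to two concrete geometric situations on $X_m$ determined by Atiyah's classification of rank-two bundles on elliptic curves. I would begin by recalling that, for $f\colon X\to C$ a ruled surface over an elliptic curve with all sections of nonnegative self-intersection, the invariant $e := -\deg(\text{det of the defining bundle})$ satisfies $e\leq 0$; equivalently, up to numerical equivalence the effective cone $\overline{NE}(X)$ is spanned by the class $F$ of a fibre and the class $C_0$ of a minimal section with $C_0^2 = -e\geq 0$, and $-K_X \equiv 2C_0 + eF$ lies in the interior (if $e<0$) or on the boundary ray spanned by $C_0$ (if $e=0$, where $X\cong \PP(\Oo\oplus L)$ for $L$ of degree $0$, or $X\cong \PP(E)$ for $E$ the Atiyah bundle). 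The point is that in all these cases $-K_X$ is itself num-effective, indeed nef; this is the content of the references \cite{Sh00} and \cite[1.5.A]{L04} cited in the proof of Corollary~\ref{cor: antinef}.

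Next I would argue that $-(K_X+B)$ nef forces $B$ to be ``small'' in a precise numerical sense. Write $B = \sum b_i B_i$ with $0\leq b_i\leq 1$. Intersecting $-(K_X+B)$ with $F$ gives $B\cdot F \leq -K_X\cdot F = 2$, and intersecting with $C_0$ gives a second inequality; together with the lc condition these pin down the possible horizontal components of $B$ and their coefficients. The key sub-case is when $-(K_X+B)$ is neither big nor numerically trivial (the big case gives an effective $\RR$-divisor in the same $\RR$-linear class by Riemann--Roch/asymptotic considerations, and the trivial case follows from abundance for lc surfaces, exactly as in the proof of Corollary~\ref{cor: antinef}). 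In that intermediate situation $-(K_X+B) \equiv D$ with $D$ nef, $D^2 = 0$, so $D$ is proportional to a fibre class or to $C_0$ with $C_0^2 = 0$; I would treat these separately. If $D\equiv \lambda F$, then $D = f^*A$ for an effective $\RR$-divisor $A$ on $C$ and one pushes the $\RR$-linear equivalence down/up along $f$ using $h^1(C,\Oo_C)$-considerations on the elliptic base; if $D$ is proportional to $C_0$ with $C_0^2 = 0$ (the $e=0$ case), then $K_X \equiv -2C_0 \equiv$ a multiple of $D$, and one reduces $-(K_X+B)\sim_\RR D$ to $-(K_X+B_{\mathrm{red}})\sim_\RR (\text{mult})\,C_0$, checking linear (not just numerical) equivalence by hand on $\PP(E)$ for the rank-two degree-zero bundles $E$.

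The main obstacle I anticipate is upgrading \emph{numerical} equivalence to \emph{$\RR$-linear} equivalence on a surface with $h^1(X,\Oo_X) = g(C) = 1$, where $\mathrm{Pic}^0$ is nontrivial: a nef class with self-intersection zero need not \emph{a priori} be linearly equivalent to an effective divisor, and one must use the specific geometry of the ruled surface (the Leray spectral sequence for $f$, so that $f_*\Oo_X(-K_X-B) $ is a rank-one or rank-two sheaf on the elliptic curve $C$, together with the fact that a degree-zero line bundle on an elliptic curve that is a limit of effective ones is trivial, hence effective) to produce an honest section. I would organize the endgame as: (1) restrict to a general fibre to control $f_*\Oo_X(\lceil -(K_X+B)\rceil)$; (2) identify the resulting sheaf on $C$ up to a twist by $\mathrm{Pic}^0(C)$; (3) observe the twist must be trivial because $-(K_X+B)$ is \emph{numerically} equivalent to something effective and $\mathrm{Pic}^0(C)$ is divisible, so any numerically trivial line bundle appearing as an obstruction can be absorbed. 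Once effectivity of a section on $C$ (or on $X$ directly) is secured, the $\RR$-divisor $D$ with $-(K_X+B)\sim_\RR D\geq 0$ is obtained, completing the proof and hence the proof of Corollary~\ref{cor: antinef}.
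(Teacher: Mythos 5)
The absorption argument in your endgame step~(3) is the critical gap. Divisibility of $\Pic^0(C)$ as an abstract abelian group does not make a nontrivial numerically trivial line bundle $\RR$-linearly equivalent to zero: if an integral divisor $L$ on $C$ satisfies $L\sim_{\RR}0$, then by the usual rationality argument $L\sim_{\QQ}0$, which forces $L$ to be torsion in $\Pic^0(C)$. For a nontorsion twist the $\Pic^0$-obstruction genuinely persists, and nothing in your argument disposes of it. (A secondary slip: for $e=-1$ the cone $\overline{NE}(X)$ is spanned by $F$ and $2C_0+eF\equiv -K_X$, not by $F$ and $C_0$, and $-K_X$ lies on the boundary since $(-K_X)^2=0$, not in the interior as you assert.)

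The mechanism the paper uses is different and in fact shows why the twist cannot simply be killed. After reducing to $\QQ$-coefficients via Proposition~\ref{prop: RtoQ}, one notes that $-K_X$ spans a boundary ray $R$ of the strongly convex cone $\overline{NE}(X)$; since $-(K_X+B)$ is nef, hence pseudo-effective, and $B\geq 0$, the decomposition $-K_X\equiv\bigl(-(K_X+B)\bigr)+B$ forces the class of $B$ onto $R$. One then analyses $\me=f_*\Oo_X(C_0)$. When $\me$ is indecomposable, or decomposable with torsion twist, every effective class on $R$ is $\QQ$-linearly a multiple of $-K_X$, and $|-2K_X|\neq\emptyset$ concludes. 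But when $\me=\Oo_X\oplus\Oo_X(\fe)$ with $\fe$ nontorsion, the only curves on $R$ are $C_0$ and $C_0'\in|C_0-\fe|$, and $B=aC_0+a'C_0'$ with $a,a'\in[0,1]$; here the $\Pic^0$-obstruction is real --- $B$ is \emph{not} $\QQ$-linearly proportional to $-K_X$ --- yet one directly writes $-(K_X+B)\sim_{\QQ}(1-a)C_0+(1-a')C_0'\geq 0$ because the honest sections $C_0,C_0'$ themselves furnish the effective representative. This is precisely the sub-case your approach cannot reach.
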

\begin{proof}
By Proposition~\ref{prop: RtoQ} we can assume that $B$ is a $\QQ$-divisor. Let $C_0$ be a section of $f\colon X\rightarrow C$ such that $-e:=C_0^2$ attains the minimum. Then $e=0$ or $-1$; see \cite[V.2]{Har77}. As we have already seen in the proof of Corollary~\ref{cor: antinef}, the class of $K_X$ generates one boundary ray, say $R$, of $\overline{NE}(X)$.  Moreover, the linear system $|-2K_X|$ is nonempty; see \cite[Corollary 2.2]{Sh00}. Since $-(K_X+B)$ is nef, the class of $B$ necessarily lies on $R$. Note that, if $B\sim_\QQ -aK_X$ for some $a\in \QQ_{\geq 0}$, then $a\leq 1$ and $-(K_X+B)\sim_\QQ (1-a)K_X$ is $\QQ$-linearly equivalent to an effective $\QQ$-divisor.

Let $\me=f_*\mo_X(C_0)$, which is a rank two vector bundle of degree $-e$. If $\me$ is indecomposable then $B\sim_{\QQ} -aK_X$ is for some $a\geq 0$ (\cite[Examples 1.1 and 2.1]{Sh00}). If $\me$ is decomposable then $\me=\mo_X\oplus \mo_X(\fe)$ for some $\fe\in\Pic^0(C)$ and $-K_X\sim C_0 + C_0'$ where $C_0'\in |C_0 - \fe|$ is another section of $f$. In case $\fe$ is a torsion in $\Pic^0(C)$, $|mK_X|$ is base point free for sufficiently divisible $m$, inducing an elliptic quasi-bundle $X\rightarrow \PP^1$; it is clear now that $B\sim_{\QQ} -aK_X$ for some $a\in\QQ_{\geq 0}$. If $\fe$ is not a torsion in $\Pic^0(X)$ then $C_0$ and $C_0'$ are the only curves whose numerical classes lie on $R$ (cf.~\cite{Ho80}). In this case, $B= aC_0 + a' C_0'$ with $0\leq a, a' \leq 1$. Therefore, $-(K_X+B)\sim_{\QQ} (1-a) C_0 + (1-a') C_0'\geq 0$.
\end{proof}



\begin{proof}[Proof of Corollary \ref{cor:serranosurface}]
The argument, already used in \cite[Remark-Corollary 2.6]{Sh00}, is by now quite standard. By the Cone Theorem, $K_X+B+tL$ is strictly nef for any $t>3$, thus there exists an effective $\Rr$-divisor $D$ such that $K_X+B+tL\equiv D$ by Theorem~\ref{thm: nv}. Since $(K_X+B+tL)^2=(K_X+B+tL)\cdot D>0$, $K_X+B+tL$ is big. Now, the corollary follows from 
the Nakai criterion for the ampleness of $\Rr$-Cartier $\RR$-divisors; see  \cite{CP90} and \cite[Theorem 2.3.18]{L04}.
\end{proof}

In view of Corollary~\ref{cor:serranosurface} we ask the following question, which is meant to generalize a conjecture of Serrano \cite{Se95}; see also \cite[Conjecture~0.1]{CCP08}. 
\begin{ques}
	Let $(X,B)$ be a projective log canonical pair of dimension $n$ and let $L$ be a strictly nef Cartier divisor on $X$. Is $K_X+B+tL$ ample for every real number $t>n+1$?
\end{ques}

\subsection{Effective numerical nonvanishing for surfaces}
In this subsection we prove two effective  numerical nonvanishing results, Theorems~\ref{thm: eff nv klt} and \ref{thm: dcc}, for generalized log canonical surfaces. 

\begin{thm}\label{thm: eff nv klt}
Let $(X, B)$ be a projective log canonical surface with $\QQ$-coefficients and $M$ a nef $\QQ$-Cartier $\QQ$-divisor. If $r(K_X+B+M)$ is nef and Cartier for some positive integer $r$, then $Nr(K_X+ B+M)$ is numerically equivalent to an effective Cartier divisor for any integer $N\geq 3$.
\end{thm}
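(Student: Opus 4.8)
The plan is to read the statement off of Riemann--Roch for Cartier divisors on normal projective surfaces, producing for every $N\ge 3$ either a nonzero section of $N\Lambda$ or an explicit effective Cartier divisor numerically equivalent to it, where $\Lambda:=r(K_X+B+M)$. Write $L=K_X+B+M$, so $\Lambda=rL$ is nef and Cartier and $L$ is $\QQ$-Cartier; we may assume $L\not\equiv 0$, since otherwise $N\Lambda\equiv 0$. Two facts are used throughout: the inequality $L\cdot K_X=L^2-L\cdot B-L\cdot M\le L^2$ (nefness of $L$, effectivity of $B$ and $M$), and the Riemann--Roch formula
\[
\chi(X,N\Lambda)=\chi(\mo_X)+\tfrac12\,N\Lambda\cdot(N\Lambda-K_X)=\chi(\mo_X)+\tfrac12\,Nr\big((Nr-1)L^2+L\cdot(B+M)\big).
\]
Moreover, by Serre duality $h^2(X,N\Lambda)=h^0\!\big(X,\omega_X\otimes\mo_X(-N\Lambda)\big)$, and this vanishes whenever $L^2>0$ and $Nr\ge 2$: a nonzero section gives an effective Weil divisor $\Gamma$ with $K_X\sim\Gamma+N\Lambda$, whence $L\cdot\Gamma=L\cdot K_X-NrL^2\le(1-Nr)L^2<0$, contradicting nefness of $L$.

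Consider first the case $L^2>0$, so $L$ is big. Since $h^2(X,N\Lambda)=0$ we get $h^0(X,N\Lambda)\ge\chi(X,N\Lambda)$, and $\tfrac12 Nr(Nr-1)L^2\ge 3$ for $N\ge 3$ (using $(rL)^2\ge 1$); hence if $\chi(\mo_X)\ge -2$ we are done. If $\chi(\mo_X)\le -3$, then $X$ has only rational singularities by Lemma~\ref{lem: rational}, so we may pass to the minimal resolution $f\colon\tilde X\to X$: this preserves $\chi$, turns $\Lambda$ into the nef and big Cartier divisor $f^*\Lambda$, preserves the shape $f^*L=K_{\tilde X}+B_{\tilde X}+f^*M$ with $B_{\tilde X}$ a boundary and $f^*M$ nef, and global sections descend to $X$ because $f_*\mo_{\tilde X}=\mo_X$. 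Since $\chi(\mo_{\tilde X})<0$ the surface $\tilde X$ is uniruled, hence carries a fibration $g\colon\tilde X\to C$ with general fibre $\PP^1$ and $\chi(\mo_{\tilde X})=1-g(C)$; as $f^*L$ is big and nef, $f^*L\cdot F>0$. Combining Noether's formula with $K_{\tilde X}=g^*K_C+K_{\tilde X/C}$ one bounds $-\chi(\mo_{\tilde X})$ above in terms of $(f^*L)^2$ and $-f^*L\cdot K_{\tilde X}$, which is exactly what is needed to force $\chi(\tilde X,Nf^*\Lambda)>0$ for $N\ge 3$; the model case $\tilde X=C\times\PP^1$, where the relevant intersection numbers all scale with $g(C)-1$, shows this estimate is of the right order.

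Now the case $L^2=0$, $L\not\equiv 0$, where $L\cdot K_X=-L\cdot(B+M)\le 0$. If $L\cdot(B+M)>0$ one proceeds as above: $h^2(X,N\Lambda)=0$ (now because $L\cdot\Gamma=-L\cdot(B+M)<0$), $\chi(X,N\Lambda)=\chi(\mo_X)+\tfrac12 Nr\,L\cdot(B+M)$, and this is positive once $\chi(\mo_X)>0$, while if $\chi(\mo_X)\le 0$ the minimal resolution is again ruled (one checks a bielliptic or abelian surface cannot occur here) and one finishes as in the big case or via the pullback argument below. If $L\cdot(B+M)=0$, so $L\cdot K_X=0$ too, then arguing as in the proof of Theorem~\ref{thm: nv} — via Lemma~\ref{lem: Hodge} and an MMP, after reducing to $\QQ$-factorial rational singularities or the minimal resolution — one reaches the situation where $L$ is numerically the pullback $g^*A$ of a divisor $A$ of positive degree on a curve $C$. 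Then $NrA$, an integral divisor of degree $d=Nr\deg A>0$, is numerically equivalent to a sum $p_1+\dots+p_d$ of general points of $C$; choosing the $p_i$ away from the finitely many points of $C$ over which the $f$-exceptional curves lie (these curves are vertical for $g$ since $C$ has positive genus), the effective divisor $g^*(p_1+\dots+p_d)$ meets no exceptional curve, hence pushes down to an effective \emph{Cartier} divisor on $X$ numerically equivalent to $N\Lambda$, for every $N\ge 1$.

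The main obstacle is the big case (and more generally the case of numerical dimension one) when $\chi(\mo_X)\le 0$: one cannot bound $\chi(\mo_X)$ below by an absolute constant, so the crude Riemann--Roch estimate is insufficient, and one must genuinely exploit the ruled structure of the minimal resolution — the positivity of $f^*L\cdot F$ and the numerical constraints coming from $f^*L=K_{\tilde X}+B_{\tilde X}+f^*M$ — to see that the quadratic growth in $N$ of $\tfrac12\,Nf^*\Lambda\cdot(Nf^*\Lambda-K_{\tilde X})$ always overtakes $-\chi(\mo_{\tilde X})$ once $N\ge 3$. A secondary point requiring care is to descend from $\tilde X$ to $X$ an effective \emph{Cartier} divisor rather than merely an effective $\RR$-divisor, which is why the argument is phrased in terms of sections and of divisors disjoint from the exceptional locus.
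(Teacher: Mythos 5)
There is a genuine gap, and it occurs precisely where the theorem is hard and where the constraint $N\ge 3$ is actually used.

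Your Riemann--Roch and Serre-duality bookkeeping is fine: the $h^2=0$ argument when $L^2>0$ is correct, and so is the observation that $\chi(\mo_X)\ge -2$ already forces $\chi(X,N\Lambda)>0$ for $N\ge 3$. But you treat the two remaining regimes as technicalities, and in fact each one requires a substantive idea you do not supply.

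\emph{The big case with $\chi(\mo_X)\le -3$.} You appeal to the $\PP^1$-fibration from the Albanese map and assert that Noether's formula ``bounds $-\chi(\mo_{\tilde X})$ above in terms of $(f^*L)^2$ and $-f^*L\cdot K_{\tilde X}$.'' No such bound is established, and a ``model case shows this estimate is of the right order'' is not a proof: the hard subcase is exactly where the boundary $B$ has lc but non-klt singularities sitting over a high-genus base. The paper instead proves Lemma~\ref{lem: nv kwmt} by reducing to a dlt model with $\llcorner B\lrcorner\neq 0$, applying Kawamata--Viehweg vanishing to get a surjection $H^0(X,K_X+B+M)\to H^0(\llcorner B\lrcorner,\,\cdot\,)$, and then computing on the (possibly high-genus) boundary curve. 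That vanishing-plus-restriction argument, together with Kawamata's effective nonvanishing for the klt case, is the actual content here; a pure numerical $\chi$-estimate does not replace it.

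\emph{The case $L^2=0$, $L\cdot(B+M)=0$.} You claim one ``reaches the situation where $L$ is numerically the pullback $g^*A$'' from a curve, and then pull back general points. This is false when the relevant minimal model is an abelian, bi-elliptic, or properly elliptic surface, which is exactly what happens under the hypothesis $\chi(\mo_X)=0$ and $L\cdot K_X=0$. In the bi-elliptic and properly elliptic cases, $L$ is numerically proportional to the fibre class of an elliptic quasi-bundle, and that class is generated not by $g^*(\text{point})$ but by the fractional classes $[(1/m_i)F]$ coming from the multiple fibres $m_iF_i$. Whether $NL$ lies in the effective semigroup is then a question about the numerical semigroup generated by $m/m_1,\dots,m/m_k$, and the paper's Proposition~\ref{prop: ell num eff} resolves it via Brauer's estimate on Frobenius numbers. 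This is precisely the source of the constraint $N\ge 3$: as the remark after that proposition points out, $2(K_X+M)$ can fail to be numerically effective. Your pullback-of-general-points argument, which you say works for ``every $N\ge 1$,'' is therefore necessarily missing this case, and the abelian case (handled in the paper by Bauer's lemma on cones of curves of abelian surfaces) is also unaccounted for.

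In short, you have correctly stripped the problem down to the two hard regimes but then asserted rather than proved the key estimates; the paper's proof supplies (a) a KV-vanishing-and-restriction argument for big $M$ with non-klt boundary and (b) the Frobenius-number analysis for elliptic quasi-bundles, neither of which appears in your sketch. You also omit the reduction $M'=(r-1)(K_X+B+M)+M$ that the paper uses to normalize to $r=1$, though that is a convenience rather than a gap.
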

We first deal with elliptic quasi-bundles.
\begin{definition}[\cite{Se91}]
A projective morphism $f\colon X\rightarrow C$ from a smooth projective surface onto a smooth curve is called a \emph{quasi-bundle} if all of the fibres are connected, the smooth fibres are pairwise isomorphic and the only singular fibres are multiples of smooth curves.
\end{definition}

\begin{prop}\label{prop: ell num eff}
Let $f\colon X\rightarrow C$ be an elliptic quasi-bundle from a smooth projective surface onto a curve. Let $M$ be a nef Cartier divisor such that $K_X+M$ is nef and $M\cdot F=0$ where $F$ is a general fibre of $f$. Assume that one of the following conditions is satisfied:
\begin{enumerate}[leftmargin=*]
\item[(i)] $f$ is an elliptic bundle, that is, it does not have singular fibres at all;
\item[(ii)] $g(C)\geq 1$;
\item[(iii)] $q(X)=g(C)+1$.
\end{enumerate}
Then $N(K_X+M)$ is numerically equivalent to an effective Cartier divisor for integers $N \geq 3$.
\end{prop}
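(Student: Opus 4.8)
The plan is to study the structure of $K_X + M$ using the canonical bundle formula for the elliptic quasi-bundle $f\colon X \to C$. First I would recall that for an elliptic quasi-bundle, the relative canonical divisor is controlled explicitly: one has $K_X \sim_{\QQ} f^*(K_C + \mathfrak{d})$ for an effective $\QQ$-divisor $\mathfrak{d}$ on $C$ arising from the moduli part plus the multiple-fibre contributions (the multiple fibre $mF_0$ contributes a coefficient $\tfrac{m-1}{m}$). In particular $K_X$ is numerically a pull-back from $C$. Since $M \cdot F = 0$ and $M$ is nef Cartier, $M$ is also numerically (indeed $\QQ$-linearly, after twisting) a pull-back: $M \equiv f^* A$ for a nef, hence effective-up-to-numerical-equivalence, $\QQ$-divisor $A$ on $C$ of degree $M \cdot C_0 / (F\cdot C_0)$ type — more precisely $\deg A = M \cdot (\text{section or multisection})$ normalized appropriately, which is $\geq 0$. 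Thus $K_X + M \equiv f^*(K_C + \mathfrak{d} + A) =: f^* L$ with $\deg L = \deg K_C + \deg\mathfrak{d} + \deg A$.

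The next step is to bound $\deg L$ from below in each of the three cases, so that $N \deg L$ becomes the degree of an effective (Cartier) divisor on $C$, which then pulls back to the desired effective Cartier divisor numerically equivalent to $N(K_X+M)$ (using that $N(K_X+M)$ is Cartier once $N$ clears the denominators, and that $f^*$ of an effective divisor is effective). In case (ii), $g(C) \geq 1$ forces $\deg K_C \geq 0$, and $\deg \mathfrak{d} \geq 0$, $\deg A \geq 0$, so $\deg L \geq 0$; if $\deg L > 0$ we are done with $N=1$, and if $\deg L = 0$ then $L \equiv 0$, so $K_X + M \equiv 0$ and we take $D = 0$. In case (i), $f$ is a genuine elliptic bundle so $\mathfrak{d}$ is just the (nef) moduli part, and the argument is the same — the point is simply that no negative contributions enter. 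Case (iii), $q(X) = g(C) + 1$, is the delicate one: here the Leray spectral sequence gives $q(X) = g(C) + h^0(C, R^1 f_* \mathcal{O}_X)$, and $q(X) = g(C)+1$ says $R^1 f_* \mathcal{O}_X$ has a $1$-dimensional space of sections, which via relative duality $R^1 f_* \mathcal{O}_X \cong (f_* \omega_{X/C})^\vee$ translates into a statement forcing $\deg(f_*\omega_{X/C})$, equivalently $\deg \mathfrak{d}_{\mathrm{mod}}$, to be small; combined with the requirement that $K_X$ (hence $K_C + \mathfrak{d}$) be such that $K_X + M$ is nef, one extracts the needed lower bound $\deg L \geq -\tfrac{1}{N}$ for $N \geq 3$, i.e.\ $\deg(NL) \geq -1$, which together with the fractional contributions from multiple fibres pushes $\lceil N \deg L \rceil \geq 0$; more care is needed when $g(C)=0$, where $\deg K_C = -2$ and one must play the $\tfrac{m_i-1}{m_i}$ terms against it.

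The main obstacle I anticipate is precisely case (iii) with $g(C) = 0$: there $\deg K_C = -2$, so positivity of $\deg L$ is not automatic and one must exploit both the multiple-fibre coefficients $\sum \tfrac{m_i - 1}{m_i}$ and the constraint coming from $q(X) = 1$ (which limits how small the moduli degree can be while $K_X + M$ stays nef) to show $N \deg L = N(\deg \mathfrak{d} + \deg A - 2) > -1$ for every integer $N \geq 3$ — this is the reason the hypothesis is "$N \geq 3$" rather than "$N \geq 1$", and it will require a short case analysis on the number and multiplicities of the singular fibres (the Bogomolov-type bound $\sum \tfrac{m_i-1}{m_i} \leq \chi(\mathcal{O}_X) + 2 - 2g(C) + \ldots$, or a direct computation of $\chi(\mathcal{O}_X) \geq 0$). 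The remaining routine points — that nef divisors on a curve are effective up to numerical equivalence, that $f^*$ preserves effectivity, and that $N$ clears all denominators — I would dispatch quickly.
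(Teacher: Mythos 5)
There is a genuine gap in the central claim of your plan. You write ``$K_X+M\equiv f^*L$ with $\deg L\geq 0$; if $\deg L>0$ we are done with $N=1$.'' This is false in the presence of multiple fibres. On a surface with multiple fibres $m_1F_1,\dots,m_kF_k$, the effective Cartier divisors that are numerically proportional to a general fibre $F$ have classes lying in the numerical \emph{semigroup} generated by $[(1/m_1)F],\dots,[(1/m_k)F]$ (each $F_i$ is a prime divisor of class $(1/m_i)F$). This semigroup has gaps, measured by the Frobenius number of the integer semigroup $G=\langle m/m_1,\dots,m/m_k\rangle$, $m=\lcm(m_i)$. For instance, on a bielliptic surface with multiple fibres of multiplicities $2$ and $3$ over $\PP^1$, the nef Cartier class $(1/6)F$ is \emph{not} numerically equivalent to any effective divisor, even though it has positive degree against any polarization. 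So positivity of $\deg L$ alone does not give num-effectivity; you must also check that the integer coordinate of $N(K_X+M)$ in the lattice $\tfrac{1}{dm}\ZZ\cdot[F]$ clears the Frobenius gap of the effective semigroup. This is exactly why the hypothesis is $N\geq 3$ rather than $N\geq 1$ (the paper cites an example where $2(K_X+M)$ fails), and it is the entire content of cases (ii) and (iii).

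The paper's proof is correspondingly arithmetic rather than analytic. Case (i) is dispatched by citing Serrano's Theorem~4.3. For cases (ii) and (iii) the paper invokes Serrano's explicit description of $N_1(X)$ (the integral classes proportional to $[F]$ are the multiples of $(1/dm)[F]$ for some $d\leq 3$; the effective ones are generated by the $(1/m_i)[F]$), writes $M\equiv (a/dm)F$, reduces the desired effectivity to the inequality $dK(G)+a>\Fr(G)$, and then bounds $\Fr(G)$ using Brauer's theorem on numerical semigroups. This is purely combinatorial: the final estimate organizes the multiplicities $m_i$ by a common-factor graph and runs a short case analysis, with $d=1$ (another input from Serrano) in the delicate case (iii), $g(C)=0$. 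Your proposed route via $R^1f_*\mathcal O_X$ and relative duality to bound the moduli degree never produces this semigroup condition; the constraint on $\deg L$ you would extract lives in $\RR$, whereas what you need is membership in a discrete semigroup with gaps. In short, the ``routine points I would dispatch quickly'' (that nef of nonnegative degree pulls back to effective) are precisely where the proposition lives, and the tool you need there --- Serrano's $N_1(X)$ structure plus a Frobenius-number bound --- is absent from your plan.
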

\begin{rmk}
It is indeed possible that $2(K_X+M)$ is not numerically equivalent to any effective Cartier divisor; see \cite[Remark~4.5]{Se91}.
\end{rmk}
\begin{proof}[Proof of Proposition~\ref{prop: ell num eff}] 

Case (i) follows by \cite[Theorem~4.3]{Se91}.

For the remaining two cases, let $m_1F_1,\dots, m_kF_k$ be the multiple fibres. Let $m=\lcm(m_1,\dots, m_k)$.  By \cite[Theorems 4.1 and 4.4]{Se91}, we have the following description of  $N_1(X)$:
\begin{itemize}[leftmargin=*]
\item The integral numerical classes that is proportional to $[F]\in N_1(X)$ are exactly the integral multiples of $(1/dm)[F]$ for some fixed $d\leq 3$. Thus $M\equiv (a/dm)F$ for some nonnegative integer $a$.
\item  The semigroup of the numerical classes of curves that are proportional to $[F]$ are generated by 
$[(1/m_1) F], \ldots, [(1/m_k) F].$
\end{itemize}
Let $G\subset \ZZ_{\geq 0}$ be the numerical semi-group generated by $m/m_1,\dots, m/m_k$. Its Frobenius number $\Fr(G)$ is defined to be the largest integer that does not lie in $G$. Since
\[
K_X+M\equiv\left(2g(C)-2+\sum_{1\leq i\leq k} \left(1-\frac{1}{m_i}\right) +\frac{a}{dm}\right)F,
\]
it suffices to show that $dK(G)+ a> \Fr(G),$ where $K(G):=m(2g(C)-2+\sum_{1\leq i\leq k}(1-1/m_i) )$,

Define the common factor graph $\Gamma$ of the natural numbers $m_1,\dots, m_k$ so that there are $k$ vertices $v_i$ corresponding to the multiplicities $m_i$ respectively, and $v_i$ and $v_j$ are connected by an edge if $\gcd(m_i, m_j)>1$.  Then relabel  $m_1,\dots, m_k$ in such a way that the subgraphs containing $\{v_1, \dots, v_{k_1}\}$, $\{v_{k_1+1}, \dots, v_{k_1+k_2}\}$, $\dots, \{v_{k_1+\dots+k_{s-1}+1}, \dots, v_k\}$ respectively form the connected components of the graph $\Gamma$. Obviously, for an index $i\notin\{1, k_1+1,k_1+k_2+1, \dots,k_1+\cdots +k_{s-1}+1 \}$, one has $\gcd(\lcm(m_1, \dots, m_{i-1}), m_i)>1.$

By \cite[Theorem 2]{Br42}, we have
\begin{equation}\label{eq: Br}
\Fr(G) \leq \sum_{2\leq i\leq k}\frac{m}{m_i}\cdot\frac{d_{i-1}}{d_i} -\sum_{1\leq i \leq k} \frac{m}{m_i} = m\sum_{2\leq i\leq k} \left(\frac{d_{i-1}}{m_id_i} - \frac{1}{m_i}\right) - \frac{m}{m_1}.
\end{equation}
where $d_i=\gcd(m/m_1,\dots,m/m_i)$ for $1\leq i\leq k$. We compute for $2\leq i\leq k$
\[
\frac{d_{i-1}}{m_id_i} = \frac{\lcm(m_1,\dots, m_i)}{\lcm(m_1,\dots, m_{i-1})\cdot m_i}=\frac{1}{\gcd(\lcm(m_1,\dots, m_{i-1}), m_i)}.
\]
Denoting $e_i=\gcd(\lcm(m_1,\dots, m_{i-1}), m_i)$ for $i\geq 2$, it follows from \eqref{eq: Br} that
\begin{equation}\label{eq: K-F}
\begin{split}
&\left(dK(G)+a\right)-\Fr(G) \\
 \geq & dm\left(2g(C)-2+\sum_{1\leq i\leq k} \left(1-\frac{1}{m_i}\right)\right) +a - m\sum_{2\leq i\leq k} \left(\frac{1}{e_i} - \frac{1}{m_i}\right) + \frac{m}{m_1}\\
=&dm\left(2g(C)-2 +1-\left(1-\frac{1}{d}\right)\frac{1}{m_1} +\frac{a}{dm}+ \sum_{2\leq i\leq k}\left(1 +\frac{1}{dm_i}-\frac{1}{m_i}-\frac{1}{de_i}\right)\right).
\end{split}
\end{equation}
which is positive as soon as $g(C)\geq 1$. So case (ii) of the lemma is confirmed.

Now we can assume that we are in case (iii) but not in cases (i) and (ii), so $g(C)=0$. In this case we have $d=1$ by \cite[Theorem~4.1]{Se91}, and \eqref{eq: K-F} becomes
\begin{equation}\label{eq: K-F'}
K(G) + a -\Fr(G) \geq m\left( -1 + \frac{a}{m}+\sum_{2\leq i\leq k}\left(1- \frac{1}{e_i}\right)\right).
\end{equation}

If $s\geq 2$, then
\begin{multline*}
 -1 + \sum_{2\leq i\leq k}\left(1- \frac{1}{e_i}\right) \geq -1 + 1 -\frac{1}{e_2} +1 - \frac{1}{e_{k_1+2}}
   = 1-\frac{1}{e_2} - \frac{1}{e_{k_1+2}} \\
   \geq 1-\frac{1}{2} - \frac{1}{3} >0.
 \end{multline*}
 where the second inequality is because $e_i\geq 2$ for each $i\geq 2$, and $e_2$ and $e_{k_1+2}$ are coprime.

 If $s=1$, then $k_1=k$. Since $g(C)=0$, we have in any case $k\geq 2$ by looking at the monodromy of $f$. One see that
$  \sum_{2\leq i\leq k}(1- 1/e_i)\leq 1$ only if
\begin{enumerate}
\item[(i)] $k=3$ and $e_1=e_2=e_3=2$, in which case $m=m_1=m_2=m_3=2$, or
\item[(ii)] $k=2$, in which case $m=m_1=m_2$.
\end{enumerate}
In both cases we have $G=\Zz_{\geq 0}$, so $K_X+M$ is numerically equivalent to an effective Cartier divisor.
\end{proof}

For the proof of Theorem~\ref{thm: eff nv klt}, we need the numerical version of an effective nonvanishing result of Kawamata (\cite{K00}).
\begin{lem}\label{lem: nv kwmt}
Let $(X, B)$ be a projective log canonical surface and $M$ a big and nef $\QQ$-Cartier $\QQ$-divisor on $X$. If $K_X+B+M$ is nef and Cartier (so in particular the sum $B+M$ has integer coefficients), then either $\kappa(K_X+B+M)\geq 0$ or $K_X+B+M\equiv 0$, where $\kappa(K_X+B+M)$ denotes the Kodaira--Iitaka dimension of $K_X+B+M$.
\end{lem}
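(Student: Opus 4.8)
The plan is to reduce to the classical effective nonvanishing theorem of Kawamata \cite{K00}, which gives the statement with ``$\equiv 0$'' replaced by ``$\sim_{\QQ} 0$'' but only modulo numerical triviality of the relevant data; the point here is to obtain the \emph{numerical} conclusion under the weaker hypotheses (real-coefficient boundary allowed after perturbation, $X$ only lc, $M$ only $\QQ$-Cartier nef and big). First I would pass to the minimal resolution $f\colon \tilde X\to X$ and write $f^*(K_X+B+M)=K_{\tilde X}+B_{\tilde X}+M_{\tilde X}$, where $M_{\tilde X}=f^*M$ is still nef and big and $B_{\tilde X}$ is a boundary (the lc hypothesis on $(X,B)$ guarantees $B_{\tilde X}\le 1$, and by the negativity lemma the exceptional part has coefficient $\le 1$ as well). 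Since $\kappa$ and numerical triviality are preserved under pull-back by a birational morphism, it suffices to prove the statement on $\tilde X$; so I may assume $X$ is smooth.

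Next I would run a $(K_X+B)$-MMP, or more precisely a $(K_X+B+M)$-negative MMP relative to nothing, but here the cleaner route is: if $K_X+B+M$ is already big we are trivially done since then $h^0$ of large multiples grows, so assume $\nu(K_X+B+M)\le 1$. Because $M$ is big and nef, for $0<\epsilon\ll 1$ the divisor $K_X+B+(1-\epsilon)M+\epsilon M=K_X+B+M$ can be rewritten as $K_X+B'+A$ with $A=\epsilon M$ ample and $(X,B')$ still lc, where $B'=B+(1-\epsilon)M$ — this is the standard trick to put ourselves in the setting where Kawamata--Shokurov base-point-freeness and Kawamata's nonvanishing apply. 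Then Kawamata's effective nonvanishing \cite[Theorem~(something)]{K00} applies to $K_X+B'+A$ (an lc surface pair with an ample $\QQ$-divisor summand, and $K_X+B+M$ nef Cartier) and yields that either $\kappa(K_X+B+M)\ge 0$ or $K_X+B+M\sim_{\QQ}0$, which in particular gives $K_X+B+M\equiv 0$. Actually, to extract the purely numerical dichotomy without chasing $\QQ$-linear equivalence, I would argue as follows: if $K_X+B+M\not\equiv 0$ then its numerical dimension is $0$ or $1$; in the numerical dimension $0$ case one shows $K_X+B+M$ is numerically equivalent to a non-trivial nef divisor with self-intersection zero, and a Riemann--Roch / Kawamata-nonvanishing argument (as in the proof of Theorem~\ref{thm: nv}) forces a section; in the numerical dimension $1$ case, after the $(K_X+B+M)$-MMP we land on a Mori fibre space $Y\to Z$ with $K_Y+B_Y+M_Y$ nef, pulled back from $Z$, and positivity on the fibre plus nonnegativity on the base gives effectivity up to numerical equivalence, hence $\kappa\ge 0$.

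The main obstacle I anticipate is the bookkeeping needed to stay inside the hypotheses of Kawamata's theorem after the perturbation $B\rightsquigarrow B+(1-\epsilon)M$: one must check that the resulting pair remains lc (which is automatic since we are subtracting from, not adding to, the coefficient of an effective divisor) and, crucially, that $K_X+B+M$ remaining \emph{Cartier} is the integrality input that Kawamata's nonvanishing needs, so the $\epsilon$-perturbation must not be applied to that divisor, only to the decomposition used to invoke the vanishing theorems. A secondary subtlety is the case where $M$ is big but $K_X+B+M$ has numerical dimension $0$: here bigness of $M$ is not directly usable, and I would instead lean on the Riemann--Roch computation together with Serre duality exactly as in the proof of Theorem~\ref{thm: nv} — the vanishing $h^2(X,N(K_X+B+M))=h^0(X,-(B+M)-(N-1)(K_X+B+M))=0$ for $N\ge 2$ (using nefness and, if needed, bigness of $M$ to kill the $h^0$), combined with the asymptotic behaviour of $\chi$, forces either $\chi(\mathcal O_X)\le 0$ and a degenerate intersection-theoretic situation — handled by Lemma~\ref{lem: Hodge} and the ruled-surface analysis — or outright nonvanishing.
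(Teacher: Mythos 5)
Your proposal identifies the right starting point (reduce to a smooth model, invoke Kawamata \cite{K00}), but it overlooks the central difficulty, which is precisely the non-klt case. Kawamata's effective nonvanishing theorem \cite[Theorem~3.1]{K00} is stated for \emph{klt} pairs, and the paper's proof explicitly splits on this: the klt case is dispatched in one line by citing \cite{K00}, and almost the entire proof is devoted to the case where $(X,B)$ has nonempty nonklt locus. Your perturbation $B \rightsquigarrow B+(1-\epsilon)M$ with an ample remainder does not fix this, since the coefficient of $\lfloor B\rfloor$ stays equal to $1$, so you are still outside the hypotheses of Kawamata's theorem. (Also note, as a minor point, that $M$ need not be effective, so ``$(X,B+(1-\epsilon)M)$ is lc'' is not even a well-posed assertion without first decomposing $M\sim_{\QQ}A+E$.)

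Your fallback via the numerical dimension case analysis has a second, independent gap. In the numerical dimension one case you propose running a $(K_X+B+M)$-MMP to a Mori fibre space and concluding that $K_Y+B_Y+M_Y$ is a pullback from the base $Z$; this only gives \emph{numerical} effectivity, whereas the lemma demands $\kappa\ge 0$, i.e.\ actual global sections. If $Z$ has genus $\ge 1$ and the descended divisor on $Z$ has degree $0$, numerical effectivity does not yield a section. The paper instead handles the non-klt case by passing to a dlt modification and then the minimal resolution so that $X$ is smooth, $(X,B)$ is dlt, and $\lfloor B\rfloor\ne 0$; Riemann--Roch forces $h^0>0$ unless $\chi(\mo_X)\le 0$; when $\chi=0$ and $(K_X+B+M)\cdot(B+M)=0$ the Hodge index theorem (using $M^2>0$) gives $K_X+B+M\equiv 0$; and when $\chi<0$ the Albanese map gives a $\PP^1$-fibration over a curve of genus $\ge 2$, Kawamata--Viehweg vanishing (using bigness of $M$) gives a surjection $H^0(X, K_X+B+M)\twoheadrightarrow H^0(\lfloor B\rfloor, (K_X+B+M)|_{\lfloor B\rfloor})$, and a degree estimate plus Riemann--Roch on the nodal curve $\lfloor B\rfloor$ produces a section. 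This restriction-to-the-floor step, which is what makes the non-klt case work, is absent from your proposal.
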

\begin{proof}
If $(X, B)$ is klt then the assertion follows from \cite[Theorem~3.1]{K00}. 

Now suppose  that $(X, B)$ has nonempty nonklt locus. By taking a dlt modification (\cite[Theorem~4.4.21]{Fu17}) and then the minimal resolution, we can assume that $X$ is smooth, $(X, B)$ is dlt, and $\llcorner B \lrcorner \neq 0$. 

The Riemann--Roch Theorem reads
\begin{multline*}
h^0(X, K_X+B+M)-h^1(X, K_X+B+ M) = \chi(\mo_X) + \frac{1}{2}(K_X+B+M)\cdot (B+M).
\end{multline*}
The right side of the above equation is positive unless
\begin{itemize}
\item[(i)] $\chi(\mo_X)  = (K_X+B+M)\cdot (B+M) = 0$, or
\item[(ii)] $\chi(\mo_X)<0$.
\end{itemize}
In case (i), $K_X+B+M \equiv 0$ by the Hodge index theorem. In case (ii), the Albanese map gives a $\PP^1$-fibration $f\colon X\rightarrow C$ over a curve with $g(C)=1-\chi(\mo_X)\geq 2$. Since $M$ is big and nef, the Kawamata--Viehweg vanishing gives
\[
 0 = H^1(X, K_X+\ulcorner M \urcorner) = H^1(X, K_X+B+M-\llcorner B \lrcorner ),
\]
and hence a surjection
\[
H^0(X, K_X+B+M)\twoheadrightarrow  H^0(\llcorner B \lrcorner, (K_X+B+M)|_{\llcorner B \lrcorner}).
\]
Note that $ \llcorner B \lrcorner$ is a nodal curve. An irreducible component $\llcorner B \lrcorner$ is either vertical with respect to $f$ and hence has arithmetic genus 0, or dominant onto the base curve $C$ and hence has geometric genus at least 2. For any connected subcurve $D$ of  $\llcorner B \lrcorner$, since $K_X+B+M$ is nef and $\deg (K_X+B+M)|_D \geq \deg K_D +\deg \Diff_D(B-D)$, where $\Diff_D(B-D)$ is an effective divisor supported on the smooth locus of $D$, we have 
\[
\deg (K_X+B+M)|_D \geq \max \{0, 2p_a(D) -2\}.
\]
Taking $D$ to be a connected component of  of  $\llcorner B \lrcorner$ one deduces that $$h^0(X, K_X+B+M)\geq h^0(\llcorner B \lrcorner, (K_X+B+M)|_{\llcorner B \lrcorner})\geq h^0(D, (K_X+B+M)|_D) >0,$$ where the last inequality is by the Riemann--Roch theorem for embedded curves (\cite[Theorem~3.1]{BHPV04}).
\end{proof}

\begin{proof}[Proof of Theorem~\ref{thm: eff nv klt}]
We divide the proof into two steps.

\medskip

\noindent{\bf Step 1.} In this step we assume that $K_X+B+M$ is Cartier. By replacing $X$ with its minimal resolution and $K_X+B+M$ its pull-back, we can assume that $X$ is smooth, so $B+M$ is Cartier. After contracting $(-1)$-curves $E$ such that $(K_X+B+M)\cdot E=0$ we can assume that $K_X+B+M$ intersects any ($-1$)-curve (if existing) positively. We also assume $K_X+B+M$ is not numerically trivial, otherwise there is nothing to prove. 

If $K_X+B+2M$ is big, then for $N\geq 2$, $(N-1)(K_X+B+M)+M$ is big and nef. By Lemma~\ref{lem: nv kwmt}, we know that $N(K_X+B+M)$ is numerically equivalent to an effective Cartier divisor. 

In the following we can assume that $K_X+B+2M$ is not big. Then it necessarily holds:
\[
(K_X+B+M)^2=(K_X+B+M)\cdot M=0
\]
By Lemma~\ref{lem: Hodge}, we infer that $K_X+B+M$ and $M$ are numerically proportional to each other. Since $H^2(X, N(K_X+B+M))=0$ for $N\geq 2$, by the Riemann--Roch Theorem,
\begin{multline*}
h^0(X, N(K_X+B+M))-h^1(X, N(K_X+B+ M))\\= \chi(\mo_X) + \frac{1}{2}N(K_X+B+M)\cdot B.
\end{multline*}
As in the proof of Lemma~\ref{lem: nv kwmt}, the right side of the above equation, and hence $h^0(X, N(K_X+B+M))$, is positive, unless 
\begin{itemize}
\item[(i)]  $\chi(\mo_X) = (K_X+B+M)\cdot B=0$, or 
\item[(ii)] $\chi(\mo_X)<0$.
\end{itemize}
In case (i), a smooth minimal model of $X$ is one of the following:
\begin{enumerate}
\item[(ia)] a ruled surface over an elliptic curve,
\item[(ib)] an abelian surface,
\item[(ic)] a bi-elliptic surface,
\item[(id)] a properly elliptic surface.
\end{enumerate}

\medskip

\noindent{Case (ia).} Let $f\colon X\rightarrow C$ be the Albanese fibration, which is a $\Pp^1$-fibration over an elliptic curve $C$. Let $F$ be a general fibre of $f$.  The Riemann--Roch Theorem gives that
\begin{equation}\label{eq: serrano}
h^0(X, K_X+B+M+\alpha) \geq h^1(X, K_X+B+M+\alpha).
\end{equation}
where $\alpha\in \Pic^0(X)$. Note that either $h^1(X, K_X+B+M+\alpha)=0$ for any $\alpha\in \Pic^0(X)$ or $h^1(X, K_X+B+M+\alpha_0)=0$ for some $\alpha_0\in \Pic^0(X)$. By \cite[Proposition 1.5]{Se95} and \eqref{eq: serrano}, $K_X+B+M$ is numerically equivalent to an effective Cartier divisor in both cases.

\medskip

In cases (ib), (ic) and (id), $X$ is minimal: otherwise $K_X$ is an numerically equivalent to an effective divisor containing at least one  $(-1)$-curve $E$ and hence $(K_X+B+M)\cdot K_X\geq (K_X+B+M)\cdot E>0$ by the assumption made in the beginning of the proof. 

\medskip

\noindent{Case (ib).} The divisor $K_X+B+ M$ is numerically equivalent to an effective Cartier divisor by \cite[Lemma~1.1]{Ba98}.

\medskip

\noindent{Case (ic).} In this case $B+M\equiv K_X+B+M$ is nef and numerically proportional to the fibres of an elliptic quasi-bundle $f\colon X\rightarrow C$ satisfying the conditions of Proposition~\ref{prop: ell num eff}. Thus $N(K_X+B+M)\equiv N(B+M)$ is numerically equivalent to an effective Cartier divisor for $N\geq 3$.

\medskip

\noindent {Case (id):} In this case $K_X+B+M$ is numerically proportional to a fibre $F$ of the Iitaka fibration $f\colon X\rightarrow C$. Since the topological Euler characteristic vanishes: $$e(X)=12\chi(\mo_{X})-K_{X}^2=0,$$ one sees that $f$ is an elliptic quasi-bundle and Proposition~\ref{prop: ell num eff} does the job.

\medskip

\noindent Case (ii). In this case the Albanese map gives a $\Pp^1$-fibration $f\colon X\rightarrow C$ onto a curve $C$ with $g(C)=q(X)\geq 2$. Let $F$ be a fibre of $f$.

\medskip

\noindent{\bf Claim.} $(K_X+B+M)\cdot F=0$.

\begin{proof}[Proof of the claim]
By Theorem~\ref{thm: nv} there is an effective Cartier divisor $G$ which is numerically equivalent to $N(K_X+B+M)$ for some positive integer $N$. Let  $G_i$ be the irreducible components of $G$. Since $G$ is nef and $G^2=0$, one sees easily that $G_i^2\leq 0$ and $K_X\cdot G_i= - (B+M)\cdot G_i \leq -G_i^2$. It follows that for any $i$
\begin{equation}\label{eq: pa}
p_a(G_i) =1+ \frac{1}{2}(K_X+G_i)\cdot G_i\leq 1.
\end{equation}
Since $g(C)\geq 2>p_a(G_i)$, every $G_i$ must be vertical with respect to $f$, which means $(K_X+B+M)\cdot F=0$.
\end{proof}
Now that $(K_X+B+M)\cdot F=0$, $f\colon X\rightarrow C$ is a $\Pp^1$-fibration, and $K_X+B+M$ is Cartier and nef, we infer that $K_X+B+M\equiv a F$ for some positive integer $a$.

\medskip

\noindent{\bf Step 2.} Now we treat the general case. Let $M'=(r-1)(K_X+B+M) + M$. Then $K_X+B+M'$ is Cartier and nef. By Step 1 we know that $N(K_X+B+M') = Nr(K_X+B+M)$ is numerically equivalent to an effective Cartier divisor for $N\geq3$.
\end{proof}

For numerically trivial generalized log canonical surfaces, we notice the following consequence of the Ascending Chain Condition (ACC) for partial minimal log discrepancies of lc surfaces (\cite{Al93}) and the Global ACC for numerically trivial generalized lc pairs (\cite{BZ16}).
\begin{thm}\label{thm: dcc}
Let $I\subset \QQ_{\geq 0}$ be a set satisfying the descending chain condition. Then there is an $N\in\NN$, depending only on $I$, such that if  $(X, B+\bfM)$ is a projective generalized lc surface with
\begin{enumerate}
\item[(i)] the coefficients of $B$ lying in $I$, 
\item[(ii)] $\bfM=\sum \mu_i \overline{M}_i$ where $M_i$ are nef Cartier divisors on $X$ and $\mu_i\in I$,
\item[(iii)] $K_X+B+\bfM_X\equiv 0$,
\end{enumerate}  
then $N(K_X+B+\bfM_X)$ is Cartier.
\end{thm}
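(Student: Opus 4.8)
The plan is to bound, uniformly in terms of $I$, the local Cartier index of $L:=K_X+B+\bfM_X$ at every point of $X$, and then to take $N$ to be the least common multiple of the finitely many indices that occur. I begin with two reductions. By the Global ACC for numerically trivial generalized lc pairs in dimension two (\cite{BZ16}), there is a finite set $I_0=I_0(I)\subset I$ containing all coefficients of $B$ and all the $\mu_i$; let $q\in\NN$ be a common denominator of $I_0$. Since each $M_i$ is Cartier on $X$, the $\mathrm{b}$-divisor $\bfM$ equals the closure $\overline{\bfM_X}$, so it contributes nothing to generalized log discrepancies: $a_E(X,B+\bfM)=a_E(X,B)$ for every divisor $E$ over $X$. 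Moreover $q\bfM_X=\sum_i (q\mu_i) M_i$ is Cartier, and $(X,B)$ is an lc surface with the coefficients of $B$ in $I_0$ by Lemma~\ref{lem: glc to lc}. Letting $f\colon Y\to X$ be the minimal resolution and writing $f^*L=K_Y+B_Y+\bfM_Y$ with $\bfM_Y=f^*\bfM_X$, the coefficient of $f^*L$ along an $f$-exceptional curve $E_i$ over a point $p$ equals $\bigl(1-a_{E_i}(X,B)\bigr)+\mult_{E_i}\bfM_Y$, where $\mult_{E_i}\bfM_Y\in\tfrac1q\ZZ$ because $q\bfM_Y$ is Cartier on $Y$.

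I would then show that, for any integer $N$ with $q\mid N$ and $N\bigl(1-a_{E_i}(X,B)\bigr)\in\ZZ$ for all $f$-exceptional curves $E_i$, the divisor $NL$ is Cartier. Indeed, for such $N$ the divisor $Nf^*L$ is integral on $Y$ and satisfies $Nf^*L\cdot E_i=0$ for every $f$-exceptional $E_i$. If $p$ is a rational singularity of $X$, its exceptional fibre is a tree of rational curves, $\mathcal{O}_Y(Nf^*L)$ is trivial along the fibre, and it descends to a line bundle near $p$, so $NL$ is Cartier at $p$. If $p$ is a non-rational lc surface singularity --- a simple elliptic singularity, a cusp, or a finite quotient of one of these --- then on the curves $E_i$ with $a_{E_i}(X,B)=0$ (equivalently $\mult_{E_i}B_Y=1$) the identities $f^*L\cdot E_i=0$ together with the effectivity of $B$ and the nefness of $\bfM_Y$ force $\widetilde B\cdot E_i=\bfM_Y\cdot E_i=0$; by adjunction $f^*L$ then restricts trivially to that (arithmetic genus one) part of the exceptional fibre, the remaining exceptional curves over $p$ are rational and carry $f^*L$ with degree $0$, so there is no $\Pic^0$-obstruction and $\mathcal{O}_Y(Nf^*L)$ descends to a line bundle near $p$. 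Hence $NL$ is Cartier at $p$ in all cases.

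It therefore remains to prove that the log discrepancies $a_E(X,B)$, as $(X,B)$ ranges over all pairs satisfying (i)--(iii) and $E$ over the exceptional curves of the minimal resolution of $X$, form a finite subset of $[0,1]\cap\QQ$; granting this, a common denominator $N_0$ of $\{\,1-a\,\}$ gives the theorem with $N=\lcm(q,N_0)$. This finiteness is the main obstacle, and it really uses hypothesis (iii): without it the singularities $\tfrac1n(1,1)$ already produce the infinite discrepancy set $\{2/n\}_{n\ge 2}$. The ACC for (partial) minimal log discrepancies of lc surfaces (\cite{Al93}) rules out infinite strictly increasing sequences of such discrepancies, while for the complementary (descending) direction one exploits $K_X+B+\bfM_X\equiv 0$ --- equivalently the nefness of $-(K_X+B)\equiv\bfM_X$ --- to bound the relevant numerical invariants (such as $\chi(\mathcal{O}_X)$) and hence, through the classification of lc surface singularities carrying a DCC boundary, to keep the discrepancies from accumulating at $0$; combined with the reduction to the finite set $I_0$, this yields finiteness. (Alternatively, one may pass to a $\QQ$-factorial generalized dlt modification of $(X,B+\bfM)$ and argue on a minimal model of the resulting pair.) With the finite discrepancy set in hand, the proof concludes as above.
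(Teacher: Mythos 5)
Your proposal follows the same broad skeleton as the paper (reduce to a finite coefficient set via Global ACC, use Alexeev's ACC for partial minimal log discrepancies, conclude bounded Cartier index), but you work on the minimal resolution and try to supply an explicit descent argument instead of citing a local boundedness statement. The main problem is that the key finiteness step is not actually proved.

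You reduce the theorem to the claim that the log discrepancies $a_E(X,B)$, over all pairs in the family and all exceptional curves $E$ of the minimal resolution, form a finite set. Your justification is: ACC for (partial) minimal log discrepancies \cite{Al93} excludes infinite increasing sequences, and the numerically trivial hypothesis (iii) keeps the values bounded away from $0$, and ``combined with the reduction to the finite set $I_0$, this yields finiteness.'' This is not a valid deduction: a subset of $(0,1]\cap\QQ$ can satisfy ACC, stay bounded away from $0$, and still be infinite (a strictly decreasing sequence with positive limit has all three properties). Moreover, Alexeev's ACC result controls only the \emph{partial minimal} log discrepancy --- the minimum over exceptional $E$ --- and says nothing about the full list of exceptional discrepancies, which is what you need. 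What actually closes the gap is a genuine local boundedness result, namely \cite[Lemma~3.3]{Al93}: lc surface singularities with fixed pmld and boundary coefficients from a fixed finite set have bounded Cartier index. The paper combines \cite[Theorem~3.2]{Al93} (ACC for pmld), Global ACC for numerically trivial generalized lc pairs (to show the pmlds themselves lie in a finite set), and then \cite[Lemma~3.3]{Al93} to bound the local Cartier index; this is the content your ``through the classification of lc surface singularities carrying a DCC boundary'' gestures at but does not supply.

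Two smaller points. First, the paper passes to a $\QQ$-factorial dlt modification rather than the minimal resolution; your choice is fine in principle, but then the descent argument for non-rational lc singularities needs more care: for the formal-neighbourhood descent one must know the higher obstructions $H^1(E,\mathcal{O}_E(-mE))$ vanish for $m\ge 1$ (which they do, since $E^2<0$), and ``there is no $\Pic^0$-obstruction'' is misleading --- there is an obstruction in $\Pic^0$, it just happens to vanish because adjunction together with $f^*L\cdot E=0$ forces $\mathcal{O}_Y(Nf^*L)|_E$ to be trivial, not merely degree zero. Second, once you have (via \cite[Lemma~3.3]{Al93}) a uniform bound on the local Cartier index of $K_X+B$, the rest of the descent machinery you built becomes unnecessary: the bounded Cartier index of $K_X+B$, the Cartier-ness of $M_i$ and the finite set of $\mu_i$ already give the theorem, which is essentially how the paper concludes.
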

\begin{proof}
Without loss of generality we can assume that $1\in I$. Note that $(X, B)$ is a lc surface by Lemma~\ref{lem: glc to lc}. Let $f\colon \tilde X\rightarrow X$ be a dlt modification of $(X, B)$ and  $B_{\tilde X}$ the boundary divisor on $\tilde X$ such that $K_{\tilde X} + B_{\tilde X}  = f^*(K_X+B)$. Then $\bfM_{\tilde X} = \sum \mu_i f^*M_i$ and $(\tilde X, B_{\tilde X}+\bfM)$ is still a generalized lc surface satisfying the conditions of the theorem.  Note that the Cartier index of $K_{\tilde X} + B_{\tilde X}$ is the same as that of $K_X+B$ while the Cartier indices of $\bfM_X$ and $\bfM_{\tilde X}$ differ at most by a factor $m$ such that $m\mu_i$ are integers for all $i$. Thus the uniform boundedness of the Cartier index of $K_X+B+\bfM_X$ follows from that of $K_{\tilde X}+B_{\tilde X}+\bfM_{\tilde X}$, provided that the coefficients $\mu_i$ belong to a fixed finite set.

By \cite[Theorem~3.2]{Al93}, the partial minimal log discrepancies $\mathrm{pmld}_p(\tilde X, B_{\tilde X})$  at the singular points $p\in \tilde X$ satisfy the ascending chain condition. Here  $\mathrm{pmld}_p(\tilde X, B_{\tilde X})$ is the minimal log discrepancies of exceptional divisors appearing on the minimal resolution of $p\in \tilde X$. Upon extracting the exceptional divisors $E$ over the singular points $p\in \tilde X$ such that $a_E(\tilde X, B_{\tilde X})= \mathrm{pmld}_p(\tilde X, B_{\tilde X})$, we infer that the partial minimal log discrepancies together with the coefficients of $B_{\tilde X}$ and the $\mu_i$ form a finite set $I^0$ by the Global ACC for generalized lc pairs \cite[Theorem 1.6]{BZ16}. By \cite[Lemma~3.3]{Al93}, log canonical surface singularities with fixed pmld and with coefficients from a given finite set have bounded Cartier indices. It follows that the Cartier indices of the divisors $K_{\tilde X}+B_{\tilde X}+\bfM_{\tilde X}$ under consideration are uniformly bounded. Since the $\mu_i$ appearing in the theorem belong to the fixed finite set $I^0$, we infer that the Cartier indices of $K_X+B+\bfM_X$ are also uniformly bounded.
 \end{proof}
Since going to higher birational models may reduce the Cartier indices of generalized log canonical divisors, the condition (ii) in Theorem~\ref{thm: dcc} cannot be replaced by "$\bfM=\sum \mu_i \bfM_i$ where $\bfM_i$ are nef Cartier $\rmb$-divisors on $X$ and $\mu_i\in I$", as the following example shows:

\begin{ex}
Let $f\colon \tilde X \rightarrow E$ be a ruled surface over an elliptic curve $E$ such that there is a section $E_0$ with $E_0^2 <0$. Let $E_1$ be a section of $f$ such that $E_1\cdot E_0 =0$. Set $B_{\tilde X}= E_0+E_1$.  Let $F_1$ and $F_2$ be two distinct fibres of $f$. Set $\bfM = \overline{F_1-F_2}$; by construction, $\bfM_{\tilde X} = F_1 - F_2\equiv 0$. Then $(\tilde X, B_{\tilde X}+\bfM)$ is a generalized lc pair such that  $K_{\tilde X}+B_{\tilde X}+\bfM_{\tilde X}$ is Cartier and $K_{\tilde X} +B_{\tilde X} + \bfM_{\tilde X}\equiv 0$.

Now let $h\colon \tilde X\rightarrow X$ be the contraction of $E_0$ and $B_{X} = h_* B_{\tilde X}$. Then  $K_X+B_X$ is a Cartier divisor. Note that $\bfM_X$ is $\QQ$-Cartier if and only if $(F_1 - F_2)|_{E_0}$ is a torsion line bundle on $E_0$, or, equivalently, $\bfM_{\tilde X} =F_1 - F_2$ is a torsion line bundle on $\tilde X$; moreover, the torsion order of $\bfM_{\tilde X}$ is the same as the Cartier index of $\bfM_X$. Thus, by choosing $F_1$ and $F_2$ appropriately such that $\bfM_{\tilde X}$ is a torsion of order $m$, we obtain a generalized lc surface $(X, B+\bfM)$ such that $B_X $ and $\bfM_X$ are both Weil divisors (with $\ZZ$-coefficients) and $K_X+B_X+\bfM_{X}\equiv 0$, while the Cartier index of $K_X+B_X+\bfM_{X}$ is $m$, which can be arbitrarily large.
\end{ex}

It is an interesting question as to whether Theorem \ref{thm: dcc} still holds in higher dimensions. 

\subsection{Numerical abundance for surfaces}\label{sec: num abundance}
Numerical abundance does not hold for generalized log canonical surfaces $(X, B + \bfM)$ with worse than generalized klt singularities or when $K_X + B$ is not pseudo-effective (\cite[Section 6]{LP18a}). In this subsection we give some characterization of the failure of numerical abundance in dimension two. 
\begin{thm}\label{thm: surf abund}
	Let $(X, B+\bfM)$ be a projective generalized klt surface such that $K_X+B+\bfM_X$ is nef. Then either $K_X+B+\bfM_X$ is num-semiample or $K_X+B\equiv -t\bfM_X$ for some  real number $0\leq t\leq 1$. In particular, there is a real number $0\leq t\leq 1$ such that $K_X+B+t\bfM_X$ is num-semiample.
\end{thm}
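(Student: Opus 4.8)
The plan is to argue by a case analysis according to the numerical dimension $\nu := \nu(K_X+B+\bfM_X) \in \{0,1,2\}$, using the surface-specific tools already available: Lemma~\ref{lem: glc to lc} (so $(X,B)$ is genuinely klt and $\bfM_X$ is a nef $\RR$-Cartier divisor), Lemma~\ref{lem: Hodge} (the Hodge index dichotomy for two nef divisors with zero product), and Theorem~\ref{thm: nv} together with abundance for klt surface pairs. First I would dispose of the extreme cases. If $\nu = 2$, then $(K_X+B+\bfM_X)^2 > 0$, so $K_X+B+\bfM_X$ is nef and big, hence semiample on the klt surface $(X,B)$ by base-point-free type results (or directly, by writing it as $(K_X+B) + \bfM_X$ with $\bfM_X$ nef and big and invoking Kawamata--Shokurov); in particular it is num-semiample. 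If $\nu = 0$, then $K_X+B+\bfM_X \equiv 0$, and then trivially $K_X+B \equiv -\bfM_X$, i.e. $t = 1$ works, and $K_X+B+\bfM_X$ itself is num-semiample (being numerically trivial).

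The substantive case is $\nu = 1$: $D := K_X+B+\bfM_X$ is nef with $D^2 = 0$ but $D \not\equiv 0$. Here I would first handle the subcase $\bfM_X \equiv 0$ (equivalently $t=0$ would give $K_X+B \equiv D \not\equiv 0$ nef, but more to the point): then $K_X+B \equiv D$ is a nef klt log canonical divisor of numerical dimension one, and by abundance for klt surfaces $K_X+B$ — hence $D$ — is semiample, so num-semiample; this already fits the first alternative. So assume $\bfM_X \not\equiv 0$. Since $D$ and $\bfM_X$ are both nef and $0 \le D\cdot \bfM_X$, while $0 = D^2 = D\cdot(K_X+B) + D\cdot \bfM_X$ forces $D\cdot \bfM_X = -D\cdot(K_X+B)$; I would next argue $D\cdot \bfM_X = 0$. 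The point is that if $D \cdot \bfM_X > 0$ then $D\cdot(K_X+B) < 0$, and one can run a $(K_X+B)$-MMP (equivalently a $D$-trivial MMP in the spirit of \cite[Lemma 3.18]{HL18}) to reach a model where $K_X+B$ is relatively negative over a Mori fibre space; tracking $D$ through this, one contradicts $D$ nef with $D^2=0$, or else reduces $\bfM_X$ to a form where the claim is visible. Granting $D\cdot \bfM_X = 0$, apply Lemma~\ref{lem: Hodge} to the nef divisors $D$ and $\bfM_X$: their classes are proportional, say $\bfM_X \equiv \lambda D$ with $\lambda \ge 0$. Then $K_X+B = D - \bfM_X \equiv (1-\lambda)D$. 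If $\lambda \le 1$, writing $s = \lambda/(1+\lambda)\cdot$(appropriate normalization) — more cleanly, $K_X+B \equiv -\frac{\lambda}{1} \cdot \frac{1}{?}\,\bfM_X$; since $\bfM_X \equiv \lambda D$ and $K_X+B \equiv (1-\lambda)D$, we get $K_X+B \equiv \frac{1-\lambda}{\lambda}\bfM_X$ when $\lambda > 0$, so $K_X+B \equiv -t\bfM_X$ with $t = \frac{\lambda-1}{\lambda} \in [0,1]$ precisely when $\lambda \ge 1$. If instead $0 \le \lambda < 1$, then $K_X+B \equiv (1-\lambda)D$ is a positive multiple of the nef klt divisor $D$, hence is nef, and abundance for klt surfaces shows $K_X+B$, thus $D$, is semiample — landing again in the first alternative. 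Either way the dichotomy holds, and the final sentence follows since in the alternative $K_X+B\equiv -t\bfM_X$ we have $K_X+B+t\bfM_X \equiv 0$, which is num-semiample.

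The main obstacle I anticipate is the step asserting $D\cdot\bfM_X = 0$ in the $\nu=1$ case: a priori $D$ could meet $\bfM_X$ positively while still having $D^2 = 0$, with the "deficit" absorbed by $D\cdot(K_X+B) < 0$. Ruling this out cleanly requires care — one must exploit that $K_X+B$ cannot be "too negative" against a nef divisor of numerical dimension one on a klt surface, which is where the MMP/Mori-fibre-space analysis (and the $D$-triviality of the relevant MMP) enters, much as in Case 2 of the proof of Theorem~\ref{thm: nv}. A secondary point needing attention is making sure the num-semiampleness in the $\nu=2$ and "$K_X+B$ nef" subcases is genuinely semiampleness (not just num-effectivity), which on klt surfaces follows from abundance, but one should cite it explicitly; the statement of the theorem only claims num-semiample, so this is comfortably within reach.
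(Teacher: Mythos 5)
Your proposal has the right overall dichotomy, and the extreme cases $\nu(K_X+B+\bfM_X)\in\{0,2\}$ go through (modulo a minor slip in $\nu=2$ noted below). The gap is exactly where you flag it, in the $\nu=1$ case, and it is a genuine obstruction to your route rather than a step that merely needs polish: the assertion $D\cdot\bfM_X=0$ is simply false in general, and the contradiction you hope to extract from $D\cdot\bfM_X>0$ does not exist. For instance, take $X=\Pp^1\times\Pp^1$ with rulings $F_1,F_2$, set $B=0$, $D=F_1$ and $\bfM_X=D-K_X=3F_1+2F_2$ (an ample divisor). Then $(X,B)$ is klt, $K_X+B+\bfM_X=D$ is nef with $D^2=0$, $D\not\equiv 0$, and yet $D\cdot\bfM_X=2>0$; here $K_X+B$ is \emph{not} numerically proportional to $\bfM_X$, and the theorem's conclusion is rescued by the first alternative, since $D=F_1$ is semiample. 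Your proposed $(K_X+B)$-MMP/Mori-fibre-space contradiction cannot be made to work either: the rays $R$ it contracts satisfy $(K_X+B)\cdot R<0$ and only $D\cdot R\geq 0$, and there is no mechanism forcing $D\cdot R=0$, so the MMP is not $D$-trivial and $D$ may well cease to be nef downstairs.

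The paper avoids the issue entirely by splitting not on $\nu(D)$ but on whether $K_X+B+2\bfM_X=D+\bfM_X$ is big. If it is big, then for the klt pair $(X,B)$ the nef divisor $L:=2D$ satisfies $L-(K_X+B)=K_X+B+2\bfM_X$ big and nef, so Kawamata--Shokurov base-point-freeness gives $D$ semiample; note this branch subsumes both your $\nu=2$ case and, crucially, the problematic $\nu=1$, $D\cdot\bfM_X>0$ subcase that resists your contradiction argument. If $K_X+B+2\bfM_X$ is not big, then since it is nef one has $(D+\bfM_X)^2=D^2+2\,D\cdot\bfM_X+\bfM_X^2=0$ with all three summands $\geq 0$, so $D^2=D\cdot\bfM_X=\bfM_X^2=0$ come for free, and Lemma~\ref{lem: Hodge} applies exactly as you intended; from there your proportionality bookkeeping with $\bfM_X\equiv\lambda D$ matches the paper's $\bfM_X\equiv a(K_X+B+\bfM_X)$ case analysis. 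So the fix is not to rule out $D\cdot\bfM_X>0$ but to absorb it into the base-point-freeness branch.

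One further small correction: in your $\nu=2$ case you cannot write ``$\bfM_X$ nef and big'', since $\bfM_X$ need not be big when $D$ is. The correct input to base-point-freeness is $mD-(K_X+B)=(m-1)D+\bfM_X$ for $m\geq 2$, which is big (because $D$ is big and $\bfM_X$ is nef) and nef.
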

\begin{proof}
Suppose that $K_X+B+2\bfM_X$ is big. Then $$2K_X+2B+2\bfM_X-(K_X+B)=K_X+B+2\bfM_X$$ is big and nef, and by Kawamata--Shokurov's base point freeness theorem $K_X+B+\bfM_X$ is semiample.

Now we assume that $K_X+B+2\bfM_X$ is not big. Then $(K_X+B+\bfM_X)^2=0$ and $(K_X+B+\bfM_X)\cdot \bfM_X=0$. By Lemma~\ref{lem: Hodge}, the divisors $K_X+B+\bfM_X$ and $\bfM_X$ are numerically proportional. If $K_X+B+\bfM_X$ is numerically trivial, then it is num-semiample. Otherwise, there is a nonnegative real number $a$ such that $\bfM_X\equiv a(K_X+B+\bfM_X)$. If $a\geq 1$ then $K_X+B+(1-\frac{1}{a})\bfM_X\equiv 0$ is num-semiample and we take $t=1-\frac{1}{a}$. If $0\leq a<1$, then $\bfM_X\equiv \frac{a}{1-a}(K_X+B)$ and it follows that $K_X+B$ is nef and hence semiample. In this case, $K_X+B+\bfM_X\equiv \frac{1}{1-a}(K_X+B)$ is num-semiample.
\end{proof}
	
Theorem~\ref{thm: surf abund} does not hold for generalized log canonical surfaces, as the following example shows.
\begin{ex}[cf.~\cite{L04}, 2.3.A]\label{ex: counterexampledltabundance}
	Let $C_0\subset \Pp^2$ be a smooth cubic curve. Let $f\colon X\rightarrow \Pp^2$ be the blow-up of 12 general points on $C_0$, and let $C$ be the strict transform of $C_0$ on $X$. Then $K_X+C\sim 0$. Let $M=4H -E$ where $H=f^* L$ is the pull-back of a line and $E=\sum_{1\leq i\leq 12} E_i$ is the reduced exceptional locus of $f$. The divisor $M$ is big and nef, but not semiample. Since $X$ has no torsion, the numerical equivalence coincides with $\QQ$-linear equivalence for $\QQ$-divisors on $X$. Thus there is no semiample divisor that is numerically equivalent to $M\sim K_X+C+M$.
	
	Now let $h\colon \tilde X\rightarrow X$ be the blow-up of a point not lying on $C\cup E$. Let $\tilde C$ be the strict transform of $C$ and $\tilde M= h^*M-F$, where $F$ is the exceptional divisor of $f$. Since $\tilde M=4h^*H-F+\tilde{C}$. One sees easily that $\tilde M$ is nef, and $(\tilde X, \tilde C)$ is dlt.
	
Note that	$K_{\tilde X}+ \tilde C+\tilde M= h^*(K_X+C+M)$ is big and nef, but not numerically equivalent to a semiample divisor. Then for any $0\leq t <1$, we have $(K_{\tilde X}+ \tilde C+t\tilde M)\cdot F= -1+t<0.$ Thus $K_{\tilde X}+ \tilde C+t\tilde M$ is not nef, and hence not num-semiample either.
\end{ex}

Note that the surface $\tilde X$ in Example~\ref{ex: counterexampledltabundance} contains a curve $F$ such that $(K_{\tilde X} + \tilde C + t\tilde M ) \cdot F < 0 $ for any $t < 1$. Obviously, this annoying curve is contracted by a $(K_{\tilde X} +\tilde C + t\tilde M)$-MMP. We encode this observation in Theorem~\ref{thm: semiample} which deals with generalized log canonical (but not necessarily generalized klt) surfaces.

\begin{thm}\label{thm: semiample}
Let $(X, B + \bfM )$ be a generalized log canonical surface. Suppose that $K_X + B + \bfM_X$ is pseudo-effective. Then there exists a $0 \leq t_0 \leq 1$ such that $(X,B+t_0\bfM)$ has a numerically good minimal model.
\end{thm}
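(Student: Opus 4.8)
The plan is to combine the numerical nonvanishing theorem just proved (Theorem~\ref{thm: nv}) with a minimal model program argument with scaling. First I would pass to a $\QQ$-factorial generalized dlt modification $(X'/X, B'+\bfM)$ of $(X, B+\bfM)$, so that $X'$ is a $\QQ$-factorial surface; since the generalized log canonical divisor pulls back, it suffices to produce the desired $t_0$ and numerically good minimal model for $(X', B'+t_0\bfM)$, and then push forward. By Theorem~\ref{thm: nv} applied to $(X, B+\bfM)$ (or its modification), there is an effective $\RR$-Cartier $\RR$-divisor $D$ with $K_X+B+\bfM_X \equiv D$; the point is to now \emph{choose} $t_0$ so that running an appropriate MMP terminates with the nef (indeed num-semiample) log canonical divisor.

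The core step is an MMP with scaling of $\bfM$. Starting from the $\QQ$-factorial generalized dlt model, I would run a $(K_{X'}+B'+\bfM_{X'})$-MMP; on a $\QQ$-factorial surface each step is a divisorial contraction of a $(K_{X'}+B'+\bfM_{X'})$-negative extremal ray, and since surfaces have no flips this terminates. If the output has $K+B+\bfM$ nef we are in good shape with $t_0=1$, because on surfaces nef generalized log canonical divisors with pseudo-effective (indeed here num-effective by Theorem~\ref{thm: nv}) total are num-semiample: one shows the nef divisor $N:=K+B+\bfM$ satisfies $N^2>0$ (hence big, hence num-semiample by base point freeness after perturbing, via Lemma~\ref{lem: nv kwmt}-type arguments) or $N^2=0$, in which case $N$ is numerically proportional to the general fibre of a fibration or is numerically trivial, both of which are num-semiample. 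If instead the MMP cannot make $K+B+\bfM$ nef — precisely the phenomenon in Example~\ref{ex: counterexampledltabundance}, where a curve $F$ has $(K+B+t\bfM)\cdot F<0$ for all $t<1$ — I would decrease $t$: set
\[
t_0 := \inf\{\,t\in[0,1] \mid K_{X'}+B'+t\bfM_{X'}\text{ has a numerically good minimal model}\,\},
\]
and run the $(K_{X'}+B'+t\bfM_{X'})$-MMP with scaling of $\bfM_{X'}$, i.e. tracking the largest $\lambda$ for which $K+B+t\bfM+\lambda\bfM$ stays nef. Standard surface MMP-with-scaling arguments (finiteness of models on surfaces, Shokurov-polytope rationality from Proposition~\ref{prop: RtoQ}) show this process is finite and that the threshold $t_0$ is attained, yielding a model on which $K+B+t_0\bfM$ is nef.

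It remains to upgrade \emph{nef} to \emph{numerically good} (num-semiample) on that final model $Y$. Write $L=K_Y+B_Y+t_0\bfM_Y$. If $L$ is big then num-semiampleness follows from the base point free theorem applied to $L-\epsilon(\text{something ample})$ plus the generalized log canonical hypothesis, or directly via Lemma~\ref{lem: nv kwmt}. If $L^2=0$ and $L\not\equiv 0$, then $L\cdot\bfM_Y=0$ as well (since $t_0\bfM_Y\le L$ and both are nef), so by Lemma~\ref{lem: Hodge} the classes of $L$ and $\bfM_Y$ are proportional; analyzing the induced fibration structure (as in Case~1 and Case~2 of the proof of Theorem~\ref{thm: nv}: either $K_Y+B_Y$ becomes proportional to $\bfM_Y$, reducing to the classical abundance for lc surfaces, or one gets a Mori fibre space over a curve and pulls back an effective divisor of nonnegative degree) shows $L$ is numerically equivalent to a semiample divisor. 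If $L\equiv 0$ it is trivially num-semiample. The main obstacle I anticipate is the bookkeeping in the MMP-with-scaling argument: ensuring that as $t$ decreases through the critical value $t_0$ the relevant extremal rays are handled uniformly and that $t_0$ is actually a rational number attained by a genuine model, rather than only an infimum — this is where Proposition~\ref{prop: RtoQ} and the rationality of Shokurov-type polytopes on the $\QQ$-factorial dlt model are essential, and where one must be careful that the generalized lc (not klt) hypothesis does not break finiteness of the relevant contractions on the surface.
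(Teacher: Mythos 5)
Your definition of $t_0$ as $\inf\{t \in [0,1] \mid K_{X'}+B'+t\bfM_{X'}\text{ has a numerically good minimal model}\}$ is circular: the theorem asks you to exhibit such a $t_0$, so you cannot take the infimum over the very set whose nonemptiness you are trying to prove, and even granting nonemptiness (from the $t=0$ case and abundance for lc surfaces), you give no argument that the infimum is itself a member of that set --- ``finiteness of models'' and Shokurov polytope rationality do not obviously yield closedness of this set from below. You also misread Example~\ref{ex: counterexampledltabundance}: there the problem is not that the MMP ``cannot make $K+B+\bfM$ nef'' (for $t=1$ the divisor is already nef), but that the nef output fails to be num-semiample; on a surface, an MMP for a pseudo-effective divisor always terminates at a nef output, so the dichotomy you set up never occurs. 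Finally, the claim that $L^2=0$ forces $L\cdot \bfM_Y = 0$ ``since $t_0\bfM_Y \le L$'' is incorrect --- $t_0\bfM_Y \le L$ would mean $K_Y+B_Y \ge 0$, which is not given --- and the bigness case is not covered by base point freeness either, since $L-(K_Y+B_Y) = t_0\bfM_Y$ need not be big just because $L$ is.

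The missing idea is to take $t_0$ to be the \emph{pseudo-effective threshold} $t_0 := \inf\{t \ge 0 \mid K_X+B+t\bfM_X\text{ is pseudo-effective}\}$, exactly as in the proof of Theorem~\ref{thm: nv} but scaling $\bfM$ rather than $B$. If $t_0=0$, then $K_X+B$ is pseudo-effective and abundance for log canonical surfaces finishes. If $t_0>0$, run a $(K_X+B+t_0\bfM_X)$-MMP to reach a minimal model $X'$; then run a $(K_{X'}+B_{X'}+t_1\bfM_{X'})$-MMP for $t_1<t_0$ with $t_0-t_1$ small, which is $(K_{X'}+B_{X'}+t_0\bfM_{X'})$-trivial, and terminates at a Mori fibre space $f\colon Y\to Z$. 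On $Y$, the divisor $K_Y+B_Y+t_0\bfM_Y$ is $f$-trivial, hence pulled back from a divisor of nonnegative degree on $Z$ with $\dim Z \le 1$, and is therefore num-semiample. This Mori fibre space mechanism at the pseudo-effective threshold is what actually produces the numerically good minimal model; your outline gestures at the fibration structure but never deploys it, because your $t_0$ is not pinned down by a checkable property of the divisor.
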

\begin{proof} 
	We proceed as in the proof of Theorem~\ref{thm: nv}, scaling $\bfM$ instead of $B$. Let $t_0 := \inf\{t \geq 0 \mid K_X + B + t\bfM_X \text{ is pseudo-effective}\}$.
	
	If $t_0 = 0$ then $K_X + B$ is pseudo-effective and the Abundance Theorem for log canonical surfaces gives the assertion.
	
	If $t_0 >0$ then we may run a $(K_X +B+t_0 \bfM_X)$-MMP, and reach a minimal model $(X',B_{X'} + t_0 \bfM_{X'})$, where $B_{X'}$ is the strict transform of $B$. We will show that $K_{X'}+B_{X'}+t_0 \bfM_{X'}$ is num-semiample.
	
	After running a $(K_{X'} + B_{X'} +t_1\bfM_{X'})$-MMP for some $t_1<t_0$ with $t_0-t_1$ sufficiently small, which is $(K_{X'}+B_{X'}+t_0 \bfM_{X'})$-trivial,  we reach a Mori fibre space $f\colon Y\rightarrow Z$ of $K_{Y} + B_{Y} +t_1 \bfM_{Y}$, where $B_{Y}$ is the strict transform of $B$. Then $K_{Y}+B_{Y}+t_0 \bfM_{Y}$ is the pull-back of a divisor of non-negative degree on $Z$, which is num-semiample by the fact that $\dim Z\leq 1$.
\end{proof}

 􏰂\section{Numerical nonvanishing in higher dimensions}
In this section we investigate the numerical nonvanishing for generalized polarized pairs in all dimensions. 
The generalized canonical bundle formula, made available very recently by \cite{Fi18} and extended by \cite{HLiu19}, is crucial for our purpose. 


\begin{thm}[\cite{Fi18, HLiu19}]\label{thm: cbf}
Let $\FF$ denote either the rational number field $\QQ$ or the real number field $\RR$. Let $(X/S, B+\bfM)$ be a generalized polarized pair over a quasi-projective scheme $S$ such that $\bfM$ is an $\FF_{>0}$-linear combination of nef/$S$ Cartier $\rmb$-divisors. Let $f\colon X\to Z$ be a surjective projective morphism of normal varieties, projective over $S$, such that $K_X+B+\bfM_X\sim_{\FF,f}0$. Then there is a generalized polarized pair $(Z/S, B_f+\bfM_f)$ such that $$K_X+B +\bfM_X\sim_{\FF} f^*(K_Z+B_f +\bfM_{f,Z}).$$ Moreover, if $(X/S, B+\bfM)$ is generalized lc (resp.~generalized klt), then so is $(Z/S, B_f+\bfM_f)$.
\end{thm}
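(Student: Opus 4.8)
The plan is to prove this by the method of \cite{Fi18} (in the $\QQ$-coefficient case) and \cite{HLiu19} (in general), which follows the pattern of Kawamata's canonical bundle formula \cite{K98}: one builds on $Z$ a \emph{discriminant} $\rmb$-divisor $\bfB_f$ and a \emph{moduli} $\rmb$-divisor $\bfM_f$, and must then verify that $\bfM_f$ is $\rmb$-nef$/S$. \textbf{First I would reduce to $\FF=\QQ$.} Write $\bfM=\sum_j\mu_j^0\bfM_j$ with the $\bfM_j$ nef$/S$ Cartier $\rmb$-divisors. Arguing exactly as in the proof of Proposition~\ref{prop: RtoQ}, but using \cite[Lemma~3.1]{HLiu19} to single out the rational affine subspace along which relative $\QQ$-linear triviality over $f$ is preserved, one obtains
\[
K_X+B+\bfM_X=\sum_{\alpha}c_\alpha\,(K_X+B^\alpha+\bfM^\alpha_X),\qquad c_\alpha>0,\quad\sum_\alpha c_\alpha=1,
\]
with each $(X/S,B^\alpha+\bfM^\alpha)$ generalized lc (resp.\ generalized klt) with $\QQ$-coefficients, $\bfM^\alpha$ a $\QQ_{>0}$-combination of the $\bfM_j$, and $K_X+B^\alpha+\bfM^\alpha_X\sim_{\QQ,f}0$. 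Granting the $\QQ$-statement for each $\alpha$, one sets $B_f:=\sum_\alpha c_\alpha B^\alpha_f$ and $\bfM_f:=\sum_\alpha c_\alpha\bfM^\alpha_f$.

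\textbf{Next, in the $\QQ$-case, I would construct $\bfB_f$ and $\bfM_f$.} For a prime divisor $P$ on a birational model $Z'\to Z$, choose a model $X'\to X$ admitting a morphism $f'\colon X'\to Z'$, let $K_{X'}+B_{X'}+\bfM_{X'}$ be the crepant pullback of $K_X+B+\bfM_X$, and put
\[
\gamma_P:=\sup\{\,t\in\RR_{\ge0}\mid (X',B_{X'}+t\,(f')^*P+\bfM)\text{ is generalized lc over the generic point of }P\,\},\qquad \mult_P\bfB_f:=1-\gamma_P.
\]
Since $(X,B+\bfM)$ is generalized lc one has $\gamma_P\ge0$, so the coefficients of $\bfB_f$ are $\le 1$ (and $<1$ after a standard perturbation in the klt case); the traces $\bfB_{f,Z'}$ are compatible under pushforward, so $\bfB_f$ is a $\rmb$-$\QQ$-divisor on $Z$. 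Then I would fix an $\QQ$-Cartier $\QQ$-divisor $L_Z$ on $Z$ with $K_X+B+\bfM_X\sim_\QQ f^*L_Z$ (possible since $K_X+B+\bfM_X\sim_{\QQ,f}0$) and define the moduli part on each model by $\bfM_{f,Z'}:=L_{Z'}-K_{Z'}-\bfB_{f,Z'}$, where $L_{Z'}$ is the crepant descent of $L_Z$. A routine check shows that this is well defined up to $\QQ$-linear equivalence, is a $\rmb$-$\QQ$-divisor, and yields $K_X+B+\bfM_X\sim_\QQ f^*(K_Z+B_f+\bfM_{f,Z})$; moreover the generalized lc (resp.\ generalized klt) property of $(Z/S,B_f+\bfM_f)$ is built into the inequality $\gamma_P\ge 0$ (resp.\ $>0$), hence is inherited from that of $(X/S,B+\bfM)$.

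\textbf{The hard part will be showing that $\bfM_f$ is $\rmb$-nef$/S$}, which is what makes $(Z/S,B_f+\bfM_f)$ a genuine generalized polarized pair. By linearity in the $\bfM_j$ I would reduce to $\bfM$ a single nef$/S$ Cartier $\rmb$-divisor, and passing to a model $X'$ to which it descends makes $\bfM_{X'}$ an honest nef$/S$ Cartier divisor; after further blow-ups of $X'$ and $Z$ one is looking at a classical lc-trivial (resp.\ klt-trivial) fibration $f'\colon (X',B_{X'})\to Z'$ \emph{twisted} by the nef divisor $\bfM_{X'}$. The $\rmb$-nefness of the corresponding moduli part is the Hodge-theoretic core of the matter: in the untwisted case it is the theorem of Ambro, Kawamata and Fujino--Gongyo, and the version accommodating the extra nef term $\bfM_{X'}$ is exactly what \cite{Fi18} establishes (with the $\RR$-coefficient extension in \cite{HLiu19}). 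For the purposes of this paper I would simply invoke those results for this step; reproving the underlying variation-of-Hodge-structure estimates is what I expect to be genuinely difficult, and it is precisely what \cite{Fi18, HLiu19} supply.
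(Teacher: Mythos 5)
This theorem is cited in the paper from \cite{Fi18, HLiu19} without an independent proof, so the comparison is with those references rather than with an in-paper argument. Your sketch accurately reproduces the structure of the cited proofs: reduction from $\RR$ to $\QQ$ via a rational polytope decomposition (the role of \cite[Lemma~3.1]{HLiu19}), the definition of the discriminant via generalized lc thresholds and of the moduli part by subtraction, and delegation of the $\rmb$-nefness of $\bfM_f$ to the Hodge-theoretic input of \cite{Fi18} (extended to $\RR$-coefficients in \cite{HLiu19}). One small point worth flagging: since the generalized lc threshold is only piecewise linear in $(B,\bfM)$, the identity $\bfB_f=\sum_\alpha c_\alpha\bfB_f^\alpha$ requires the rational vectors $\rmv^\alpha$ to be chosen in a sufficiently small neighborhood of $\rmv^0$ so that the threshold is computed by the same divisors; this is implicit in your appeal to Proposition~\ref{prop: RtoQ} and \cite[Lemma~3.1]{HLiu19}, but deserves to be said. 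With that caveat, your proposal is correct and follows essentially the same route as the sources the paper cites.
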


\begin{lem}\label{lem: gmm}
Let $(X/S, B+\bfM)$ be a generalized klt pair. Then the following holds.
\begin{enumerate}
\item[(i)] The singularities of $X$ are rational.
\item[(ii)] If $K_X+B+\bfM_X$ is pseudo-effective/$S$ and either $B$ or $\bfM_X$ is big/$S$, then $(X/S, B+\bfM)$ has a good minimal model over $S$. 
\item[(iii)] If $K_X+B+\bfM_X$ is not pseudo-effective/$S$, then we may run a $(K_X+ B+\bfM_X)$-MMP with scaling of an ample/$S$ $\RR$-Cartier $\RR$-divisor and end with a Mori fiber space over $S$.  
\end{enumerate}
\end{lem}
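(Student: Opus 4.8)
The three assertions are the standard inputs from the generalized minimal model program, and I would prove them one at a time; the genuine content is in (ii).

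For (i), this is the generalized counterpart of the fact that klt singularities are rational (\cite[Theorem~5.22]{KM98}). I would take a log resolution $g\colon W\to X$ to which $\bfM$ descends and write $K_W+B_W+\bfM_W=g^*(K_X+B+\bfM_X)$. Generalized kltness says $\mathrm{mult}_E B_W<1$ for every prime divisor $E$ over $X$, so the discrepancy bookkeeping of the klt case goes through unchanged: the relevant round-up of $K_W-g^*(K_X+B+\bfM_X)$ is an effective $g$-exceptional divisor, and Kawamata--Viehweg vanishing applies once the $g$-nef divisor $\bfM_W$ (plus an auxiliary small $g$-ample divisor, to supply $g$-bigness) is absorbed into the nef-and-big divisor to which it is applied. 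This yields $R^ig_*\mo_W=0$ for $i>0$, i.e. $X$ has rational singularities.

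For (ii), the first move is a perturbation that places us in the ``big nef part'' regime. Say $\bfM_X$ is big (the case of $B$ big is symmetric, moving a little of $B$ into the nef part instead). Write $\bfM_X\sim_\RR A+E$ with $A$ ample and $E\ge0$ effective; for $0<\epsilon\ll1$ the data $B^\dagger:=B+\epsilon E$ and $\bfM^\dagger:=(1-\epsilon)\bfM+\overline{\epsilon A}$ define a generalized klt pair with $K_X+B^\dagger+\bfM^\dagger_X\sim_\RR K_X+B+\bfM_X$ whose nef part $\bfM^\dagger=\overline{\epsilon A}+(1-\epsilon)\bfM$ dominates the ample $\rmb$-divisor $\overline{\epsilon A}$; that generalized kltness survives for small $\epsilon$ is checked on a common log resolution of $X$, $\mathrm{Supp}\,B$ and $A$, where the relevant coefficients depend affinely on $\epsilon$. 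Thus it suffices to produce a good minimal model of a generalized klt pair whose nef part dominates an ample $\rmb$-divisor and whose generalized log canonical divisor is pseudo-effective. For this I would run the corresponding MMP with scaling of an ample divisor $H$: flips exist by \cite{HL18}, and since $K_X+B^\dagger+\bfM^\dagger_X+tH$ is big for small $t>0$, this is a ``log big'' MMP and terminates by the generalized BCHM package of \cite{BZ16,HL18}. As $K_X+B^\dagger+\bfM^\dagger_X$ is pseudo-effective it cannot end in a Mori fiber space, so it terminates in a minimal model $\phi\colon X\dashrightarrow X'$. Upgrading this minimal model to a \emph{good} one is the step I expect to require the most care: on $X'$ the nef divisor $K_{X'}+\phi_*B^\dagger+\bfM^\dagger_{X'}$ still has big nef part (the birational-contraction pushforward of a big divisor is big), and the point is that a nef generalized log canonical divisor with big nef part on a generalized klt pair is semiample. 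This I would get by absorbing a small ample piece of $\bfM^\dagger_{X'}$ into the boundary, so that $K_{X'}+\phi_*B^\dagger+\bfM^\dagger_{X'}$ becomes $\sim_\RR$ (the nef generalized log canonical divisor of a generalized klt pair) plus (a small ample class), and invoking the generalized base-point-free theorem of \cite{BZ16,HL18}. Hence $K_{X'}+\phi_*B+\bfM_{X'}\sim_\RR K_{X'}+\phi_*B^\dagger+\bfM^\dagger_{X'}$ is semiample and $(X'/S,\phi_*B+\bfM)$ is a good minimal model of $(X/S,B+\bfM)$.

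For (iii), the clean observation is that for any ample $\RR$-Cartier $\RR$-divisor $A$ on $X$ and any $c>0$, the pair $(X/S,B+(\bfM+\overline{cA}))$ is again generalized klt — indeed it has exactly the same generalized log discrepancies as $(X/S,B+\bfM)$, since enlarging $\bfM$ to $\bfM+\overline{cA}$ while keeping $B$ fixed changes nothing after pulling back — and its nef part dominates the ample $\rmb$-divisor $\overline{cA}$, so the big-nef-part machinery of (ii) is available for all such data. With this in hand I would run the $(K_X+B+\bfM_X)$-MMP with scaling of a general ample $\RR$-divisor $A$ (flips exist by \cite{HL18}) and argue as in \cite[Proposition~8.7]{DHP13}: were this MMP infinite, then letting $\lambda_i\searrow\lambda_\infty$ be the scaling thresholds, either $\lambda_\infty=0$ and $K_X+B+\bfM_X$ is a limit of nef classes along the MMP, or $\lambda_\infty>0$ and, for any $0<t<\lambda_\infty$, the whole sequence is a terminating (by generalized BCHM applied to the big-nef-part pair $(X,B+(\bfM+\overline{tA}))$) MMP with scaling — either way a contradiction, so the MMP is finite; since $K_X+B+\bfM_X$ is not pseudo-effective it cannot end in a minimal model, hence it ends in a Mori fiber space over $S$.
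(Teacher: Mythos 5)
Your argument is correct in substance but takes a genuinely different route from the paper, and the comparison is worth noting. The paper's proof rests on a single auxiliary fact, \cite[Lemma~3.5]{HL18}: if $B$ or $\bfM_X$ is big over $S$, one can find a big boundary $\Delta\sim_{\RR,S}B+\bfM_X$ with $(X,\Delta)$ an \emph{ordinary} klt pair. Then, after replacing $\bfM$ by $\bfM+\overline{H}$ for $H$ ample (which does not change $X$, the pseudo-effectivity class of $K_X+B+\bfM_X$ modulo the choice of $H$, or the MMP one wants to run), everything in (i)--(iii) is literally a statement about the ordinary klt pair $(X,\Delta)$, and one invokes \cite[Theorem~5.22]{KM98} and \cite{BCHM10} with no further work: klt singularities are rational, a klt pair with big boundary has a good minimal model, and a non-pseudo-effective klt pair with big boundary has an MMP with scaling ending in a Mori fiber space. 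You, by contrast, never pass to an honest klt pair: you prove (i) by a direct relative Kawamata--Viehweg argument on a log resolution (which works, though note that over a birational morphism $g$-nef already implies $g$-big, so the ``auxiliary $g$-ample to supply $g$-bigness'' is unnecessary, and the final step from $R^ig_*\mo_W(\lceil -B_W\rceil)=0$ to $R^ig_*\mo_W=0$ still needs the standard Grothendieck-duality/trace-map step of the klt-rationality proof, which you leave implicit), and you prove (ii)--(iii) by invoking the generalized basepoint-free theorem and generalized BCHM of \cite{BZ16,HL18} plus a DHP-style scaling argument. What your version buys is self-containment within the generalized framework; what it costs is that you are, in effect, re-deriving consequences that \cite{HL18,BZ16} themselves typically obtain via the very reduction to $(X,\Delta)$ that the paper uses upfront. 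The paper's two-line reduction is the cleaner and more economical route, and it is worth internalizing \cite[Lemma~3.5]{HL18} as the standard device for collapsing a big nef part into the boundary.
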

\begin{proof}
If $B$ or $\bfM_X$ is big/$S$ then one can find a big/$S$ boundary divisor $\Delta\sim_{\RR, S} B+M$ such that $(X, \Delta)$ is klt; see \cite[Lemma~3.5]{HL18}. 

(i) Let $H$ be an ample divisor over $S$ on $X$. Replacing $\bfM$ with $\bfM + \overline{H}$, we can assume that $\bfM_X$ is big over $S$. Thus there is a klt pair $(X, \Delta)$ as above and the assertion follows from the fact that klt singularities are rational.

(ii) By \cite{BCHM10}, the pair $(X, \Delta)$ found above has a good minimal model over $S$ which is automatically one for $(X/S, B+\bfM)$.

(iii) Let $H$ be an ample/$S$ $\Rr$-divisor on $X$ such that $K_X+B+H+\bfM_X$ is not pseudo-effective. Replacing $\bfM$ with $\bfM + \overline{H}$, we can assume that $\bfM_X$ is big. Thus there is a klt pair $(X, \Delta)$ as above. By \cite{BCHM10}, we may run a $(K_X+\Delta)$-MMP with scaling of $H$ and end with a Mori fiber space over $S$, which is also one for $(X/S, B+\bfM)$.
\end{proof}

We generalize a construction for log canonical pairs (Lemmas \ref{lem: gongyo3.1} and  \ref{lem: gmm2}), initiated in \cite[Proposition 8.7]{DHP13} and then extended and applied in \cite{Bi12,G15, DL15, Has17, Has18}. The content of the lemmas should be known to experts, and the ideas of the proofs are already contained in the above references: we scale a non-pseudo-effective generalized log canonical divisor $K_X+B'+\bfM'_X$ with another generalized boundary divisor $B''+\bfM''_X$, in order to construct a Mori fibre space $Y\rightarrow Z$, and then one can run MMP over $Y$ and $Z$, if needed. 
\begin{lem}\label{lem: gongyo3.1}
Let $(X/S,(B'+B'')+(\bfM'+\bfM''))$ be a $\Qq$-factorial generalized dlt pair, where $\bfM'$ and $\bfM''$ are $\Rr_{> 0}$-linear combinations of nef/$S$ Cartier $\rmb$-divisors. Suppose that $K_X+(B'+B'')+(\bfM'_X+\bfM''_X)$ is pseudo-effective/$S$  but $K_X+B'+\bfM'_X$ is not. Let $$t_0:=\inf\{t \geq 0\mid K_X+B'+\bfM'_X+t(B''+\bfM''_X) \text{ is pseudo-effective/}S\}.$$ Then there exists a  birational contraction $\phi\colon X \dashrightarrow Y$ such that there exists a projective morphism $f\colon Y\to Z$ with connected fibers satisfying:
	
	(i) $(Y/S,(B'_Y+t_0 B''_Y)+(\bfM'_Y+t_0 \bfM''_Y))$ is $\Qq$-factorial generalized lc,
	
    (ii) $(Y/S,B'_Y+\bfM'_Y)$ is generalized klt,
	
	(iii) the relative Picard number $\rho(Y/Z)=1$,
	
	(iv) $K_{Y}+ (B'_Y+t_0 B''_Y)+(\bfM'_Y+t_0 \bfM''_Y)\sim_{\RR, f} 0$,
	
	(v) $B''_Y+\bfM''_Y$ is ample over $Z$, and
	
	(vi) $\dim Y>\dim Z$,\\	
	where $B'_Y,B''_Y$ are the strict transforms of $B',B''$ on $Y$ respectively. In particular, if $B',\bfM'_X,B'',\bfM''_X$ are $\Qq$-divisors then $t_0\in\Qq$.
\end{lem}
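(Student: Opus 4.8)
This construction extends \cite[Proposition~8.7]{DHP13} and its log canonical incarnations in \cite{Bi12,G15,Has17,Has18} to the generalized setting. Since $K_X+(B'+B'')+(\bfM'_X+\bfM''_X)$ is pseudo-effective/$S$ while $K_X+B'+\bfM'_X$ is not, we have $0<t_0\le 1$. The plan is to realize $\phi$ and $f$ as the output of an MMP ``tuned'' to the critical value $t_0$. Fix $t_1<t_0$ with $t_0-t_1$ sufficiently small, and run a $(K_X+B'+\bfM'_X+t_1(B''_X+\bfM''_X))$-MMP over $S$ with scaling of a sufficiently ample/$S$ $\RR$-divisor. The pair $(X/S,(B'+t_1B'')+(\bfM'+t_1\bfM''))$ is $\QQ$-factorial generalized dlt, being a convex combination of the given pair with $(X/S,B'+\bfM')$, and its generalized log canonical divisor is not pseudo-effective/$S$ by the definition of $t_0$; hence, using the MMP for generalized dlt pairs available under the hypotheses in force, this MMP terminates with a Mori fibre space $f\colon Y\to Z$ over $S$. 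Let $\phi\colon X\dashrightarrow Y$ be the induced birational contraction. Then $\rho(Y/Z)=1$ and $\dim Y>\dim Z$, so (iii) and (vi) hold.

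By the argument of \cite[Lemma~3.18]{HL18}, already used in the proof of Theorem~\ref{thm: nv}, once $t_0-t_1$ is small enough every step of this MMP --- including the final Mori fibre contraction --- is $(K_X+B'+\bfM'_X+t_0(B''_X+\bfM''_X))$-trivial. Thus on each contracted extremal ray $R$ (suppressing superscripts for the current model)
\[
\bigl(K+B'+\bfM'+t_0(B''+\bfM'')\bigr)\cdot R=0,\qquad \bigl(K+B'+\bfM'+t_1(B''+\bfM'')\bigr)\cdot R<0,
\]
and subtracting gives $(B''+\bfM'')\cdot R>0$ on every such ray. In particular $B''_Y+\bfM''_Y$ is positive on the fibre class of $f$, hence ample over $Z$ since $\rho(Y/Z)=1$, giving (v); and $(K+B'+\bfM')\cdot R=-t_0(B''+\bfM'')\cdot R<0$, so $\phi$ is also a $(K_X+B'+\bfM'_X)$-negative birational contraction. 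Since $(X/S,B'+\bfM')$ is generalized klt, this does not worsen its singularities, so $(Y/S,B'_Y+\bfM'_Y)$ is generalized klt, which is (ii). Finally, $\phi$ being $(K_X+(B'+t_0B'')+(\bfM'_X+t_0\bfM''_X))$-trivial and $\QQ$-factoriality being preserved along the MMP, the pushforward pair $(Y/S,(B'_Y+t_0B''_Y)+(\bfM'_Y+t_0\bfM''_Y))$ is $\QQ$-factorial generalized lc, giving (i).

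It remains to prove (iv). Put $L:=K_Y+(B'_Y+t_0B''_Y)+(\bfM'_Y+t_0\bfM''_Y)$. By the above, $L$ is trivial on the fibre class of $f$, hence numerically trivial over $Z$ because $\rho(Y/Z)=1$. Since $L-(K_Y+B'_Y+\bfM'_Y)=t_0(B''_Y+\bfM''_Y)$ is ample over $Z$ and $(Y/S,B'_Y+\bfM'_Y)$ is generalized klt, the relative base-point-free theorem shows $L$ is semiample over $Z$; being numerically trivial over $Z$, its associated relative morphism is finite onto $Z$, and as $f$ has connected fibres it coincides with $f$, so $L\sim_{\RR,f}0$, which is (iv). If moreover $B',\bfM'_X,B'',\bfM''_X$ are $\QQ$-divisors, then, taking a curve $C$ in a general fibre of $f$, the relation $(K_Y+B'_Y+\bfM'_Y)\cdot C+t_0(B''_Y+\bfM''_Y)\cdot C=0$ together with $(B''_Y+\bfM''_Y)\cdot C>0$ exhibits $t_0$ as a ratio of rational numbers, so $t_0\in\QQ$.

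The main obstacle is securing the MMP with scaling that terminates in a Mori fibre space for generalized dlt pairs --- precisely the input forcing several higher-dimensional results of the paper to be stated conditionally --- together with the bookkeeping that $\QQ$-factoriality and the relevant generalized log discrepancies are preserved, and that $\phi$ is simultaneously $t_1$-negative and $t_0$-trivial. Once $f\colon Y\to Z$ is in hand, the verification of (i)--(vi) is the standard extremal-ray computation on a Mori fibre space sketched above.
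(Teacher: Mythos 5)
Your proof takes a genuinely different route from the paper's, and the crucial step has a gap. The paper runs MMPs at a whole sequence $t_i\to t_0$, obtains Mori fibre spaces $f_i\colon Y_i\to Z_i$ together with numbers $\eta_i$ for which the pushed-forward divisor is numerically $f_i$-trivial, and invokes the ACC for generalized lc thresholds \cite[Theorem~1.5]{BZ16} and the global ACC \cite[Theorem~1.6]{BZ16} to conclude $\eta_i=t_0$ for $i\gg0$. You instead run a single MMP at $t_1<t_0$ and assert that, for $t_0-t_1$ small, every step is $(K_X+B'+\bfM'_X+t_0(B''+\bfM''_X))$-trivial. This assertion is false in general. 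The polytope/length-of-extremal-rays argument behind \cite[Lemmas~3.17, 3.18]{HL18} requires the divisor at $t_0$ to be nef; the paper secures this in Theorems~\ref{thm: highdim dlt} and~\ref{thm: dim3t} only by first passing to a log terminal model of the $t_0$-divisor, a step that needs termination of flips. In the present lemma the $t_0$-divisor is merely pseudo-effective, and there is no termination hypothesis --- which is exactly what makes Theorem~\ref{thm: highdim} unconditional.

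A concrete failure: let $X$ be the blow-up of $\PP^2$ at a point, $E$ the exceptional curve, $L$ the pullback of a line, and take $B'=B''=0$, $\bfM'=\epsilon\overline L$, $\bfM''=(3-\epsilon)\overline L$ for $0<\epsilon\ll1$. Then $t_0=1$, and for any $t_1<1$ the MMP contracts $E$; but
\[
\bigl(K_X+\bfM'_X+t_0\bfM''_X\bigr)\cdot E=(K_X+3L)\cdot E=E^2=-1\neq 0,
\]
so the first step is never $t_0$-trivial, no matter how close $t_1$ is to $t_0$ (since $L\cdot E=0$, scaling along $L$ never changes the intersection with $E$). All of your subsequent deductions --- that $(B''+\bfM'')\cdot R>0$ on every contracted ray, hence (v); that $\phi$ is $(K_X+B'+\bfM'_X)$-negative, hence (ii); that the pushed-forward pair at $t_0$ is generalized lc, hence (i) --- rest on this false triviality claim. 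In this example the conclusions of the lemma do hold (with $Y=\PP^2$ and $Z$ a point), but not via your route; the ACC input in the paper's proof is precisely what replaces the triviality claim and cannot be bypassed by a polytope argument alone. A secondary point: you assume $(X/S,B'+\bfM')$ is generalized klt to derive (ii), but this is not among the stated hypotheses --- a generalized dlt pair $(X/S,(B'+B'')+(\bfM'+\bfM''))$ may have a coefficient-one component inside $B'$.
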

\begin{proof}
Since $K_X+(B'+B'')+(\bfM'_X+\bfM''_X)$ is pseudo-effective/$S$  and $K_X+B'+\bfM'_X$ is not pseudo-effective/$S$ , we have $0<t_0\leq 1$. Let $t_i$ be a strictly increasing sequence of positive real numbers $t_i$ such  that $\lim_{i\to +\infty}t_i=t_0$. By \cite[Lemma~4.4]{BZ16}, we may run an MMP of $K_X+(B'+t_iB'')+(\bfM'_X+t_i\bfM''_X)$ with scaling of an ample divisor and reach a Mori fiber space $f_i:Y_i\to Z_i$. Since $\rho(Y_i/Z_i)=1$, there exists a positive number $\eta_i$ such that
	$$K_{Y_i}+(B'_{Y_i}+\eta_iB''_{Y_i})+(\bfM'_{Y_i}+\eta_i\bfM''_{Y_i})\equiv_{Z_i} 0,$$
	where $B'_{Y_i},B''_{Y_i}$ are the strict transforms of $B',B''$ on $Y_i$.
	
	It is clear that $t_i<\eta_i\le t_0$, $\lim_{i\to+\infty}\eta_i=t_0$, and $t_i\le\glct(Y_i/S,B'_{Y_i}+\bfM'_{Y_i};B''_{Y_i}+\bfM''_{Y_i})$, where $\glct$ denotes the generalized log canonical threshold. By \cite[Theorem 1.5]{BZ16}, $(Y_i/S,(B'_{Y_i}+t_0 B''_{Y_i})+(\bfM'_{Y_i}+t_0 \bfM''_{Y_i}))$ and hence $(Y_i/S,(B'_{Y_i}+\eta_i B''_{Y_i})+(\bfM'_{Y_i}+\eta_i \bfM''_{Y_i}))$ are generalized lc  for $i\gg 0$. 
		
	Let $F_i$ be a general fiber of $f_i$. Then
	$$K_{F_i}+(B'_{F_i}+\eta_i B''_{F_i})+(M'_{F_i}+\eta_i M''_{F_i}):=(K_{Y_i}+(B'_{Y_i}+\eta_i B''_{Y_i})+(\bfM'_{Y_i}+\eta_i \bfM''_{Y_i}))|_{F_i}\equiv_S0.$$
	By the Global ACC (\cite[Theorem 1.6]{BZ16}), there is some $i_0$ such that $\eta_i=t_0$ for $i\geq i_0$. Setting $Y=Y_{i_0}$, $Z=Z_{i_0}$, we obtain the desired birational map $X\dashrightarrow Y$. 
	
If $B',B'',\bfM'_X,\bfM''_X$ are $\Qq$-divisors, then so are $B'_Y,B''_Y,\bfM'_Y,\bfM''_Y$. It follows from (iii) and (iv) that $t_0\in\Qq$.
\end{proof}

In order to use the generalized canonical bundle formula for the purpose of induction on dimension, we need a (relative) good minimal model whose (relative) Iitaka fibration has positive dimensional fibres.
\begin{lem}\label{lem: gmm2}
Let the notation be as in Lemma~\ref{lem: gongyo3.1}. Assume additionally that $(X/$S$,(B'+B'') + (\bfM'+\bfM''))$ is generalized klt. Then there is a higher model $(W/$S$, B_W+ (\bfM'+t_0 \bfM''))$ of  $(X/$S$,(B'+t_0 B'') + (\bfM'+t_0 \bfM''))$ admitting a good minimal model over $Z$ whose relative Iitaka fibration has positive dimensional fibres. Moreover, if $(X/$S$,(B'+B'') + (\bfM'+\bfM''))$ has $\QQ$-coefficients then so does $(W/$S$, B_W+(\bfM'+t_0 \bfM''))$.
\end{lem}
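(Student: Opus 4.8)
The plan is to turn the Mori fibre space $f\colon Y\to Z$ produced by Lemma~\ref{lem: gongyo3.1} into a good minimal model over $Z$ of a higher model of $(X/S,(B'+t_0B'')+(\bfM'+t_0\bfM''))$; the only real issue is that $\phi\colon X\dashrightarrow Y$ is a birational \emph{map}, so one must pass to a model dominating both $X$ and $Y$.

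First I would record that $(X/S,(B'+t_0B'')+(\bfM'+t_0\bfM''))$ is generalized klt. Since $X$ is $\QQ$-factorial, $B''$ and $\bfM''_X$ are $\RR$-Cartier, and passing from this pair to $(X/S,(B'+B'')+(\bfM'+\bfM''))$ amounts to adding the effective divisor $(1-t_0)B''$ and the nef $\rmb$-divisor $(1-t_0)\bfM''$; this operation does not increase generalized log discrepancies, and by the additional hypothesis the resulting pair is generalized klt, so every generalized log discrepancy of $(X/S,(B'+t_0B'')+(\bfM'+t_0\bfM''))$ is $>0$.

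Next, take a common log resolution $p\colon W\to X$, $q\colon W\to Y$ of $\phi$ to which both $\bfM'$ and $\bfM''$ descend, and put $B_W:=(p^{-1})_*(B'+t_0B'')+\sum_E E$, the sum over the $p$-exceptional prime divisors. Then $(W/S,B_W+(\bfM'+t_0\bfM''))$ is a $\QQ$-factorial generalized dlt pair and a higher model of $(X/S,(B'+t_0B'')+(\bfM'+t_0\bfM''))$, i.e.
\[
K_W+B_W+\bfM'_W+t_0\bfM''_W=p^{*}\big(K_X+(B'+t_0B'')+(\bfM'_X+t_0\bfM''_X)\big)+G,
\]
with $G\ge 0$ and $p$-exceptional (here one uses that the $t_0$-pair is generalized klt). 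The key point is the comparison with $Y$: once its parameter is close enough to $t_0$, the MMP of Lemma~\ref{lem: gongyo3.1} reaching $Y$ is a step-by-step $\big(K_X+(B'+t_0B'')+(\bfM'_X+t_0\bfM''_X)\big)$-non-positive MMP (each contracted extremal ray is negative or trivial for the $t_0$-divisor, by finiteness of the rays involved, in the spirit of \cite[Lemma~3.18]{HL18}), whence
\[
p^{*}\big(K_X+(B'+t_0B'')+(\bfM'_X+t_0\bfM''_X)\big)=q^{*}\big(K_Y+(B'_Y+t_0B''_Y)+(\bfM'_Y+t_0\bfM''_Y)\big)+F
\]
with $F\ge 0$ and $q$-exceptional. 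Since $\phi$ is a birational contraction, every $p$-exceptional prime divisor of $W$ is also $q$-exceptional, so $H:=F+G\ge 0$ is very exceptional over $Y$.

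Finally I would run a $\big(K_W+B_W+\bfM'_W+t_0\bfM''_W\big)$-MMP over $Y$ with scaling of an ample divisor. Because $K_W+B_W+\bfM'_W+t_0\bfM''_W\sim_{\RR,Y}H$ with $H$ very exceptional over $Y$, this MMP contracts precisely $\Supp H$ and terminates — the generalized analogue of Birkar's termination for MMP with scaling of very exceptional divisors, cf.~\cite{Bi12} — ending with a birational morphism $q'\colon W'\to Y$ such that $K_{W'}+B_{W'}+\bfM'_{W'}+t_0\bfM''_{W'}=(q')^{*}\big(K_Y+(B'_Y+t_0B''_Y)+(\bfM'_Y+t_0\bfM''_Y)\big)$. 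By Lemma~\ref{lem: gongyo3.1}(iv) the right-hand side is $\RR$-linearly equivalent to the pull-back of an $\RR$-divisor from $Z$, hence $\sim_{\RR,Z}0$ and in particular semiample over $Z$; thus $(W'/S,B_{W'}+(\bfM'+t_0\bfM''))$ is a good minimal model over $Z$ of $(W/S,B_W+(\bfM'+t_0\bfM''))$, whose relative Iitaka fibration over $Z$ is $W'\to Z$ itself, with general fibre of dimension $\dim Y-\dim Z>0$ by Lemma~\ref{lem: gongyo3.1}(vi). For the rationality statement, if $B',B'',\bfM'_X,\bfM''_X$ have $\QQ$-coefficients then $t_0\in\QQ$ by Lemma~\ref{lem: gongyo3.1}, and then so do $B_W$ and $\bfM'+t_0\bfM''$. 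The step I expect to be the main obstacle is the non-positivity comparison above — making precise that the MMP of Lemma~\ref{lem: gongyo3.1} does not increase $K_X+(B'+t_0B'')+(\bfM'_X+t_0\bfM''_X)$ — together with invoking a termination result for MMP with scaling of very exceptional divisors in the generalized category.
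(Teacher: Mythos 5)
Your proposal takes a genuinely different route from the paper. The paper first passes from the log resolution $W$ to the \emph{relative canonical model} $U$ of $(W,B_W+\bfM)$ over $Y$ (using Lemma~\ref{lem: gmm}, hence BCHM, which is available because $(W,B_W+\bfM)$ is chosen generalized \emph{klt} by taking coefficient $1-\epsilon$ on the exceptional divisors); the relation $K_U+B_U+\bfM_U+E_U=g^*(K_Y+B_Y+\bfM_Y)$ with $E_U\geq 0$ then comes for free from ampleness of $K_U+B_U+\bfM_U$ over $Y$ and the negativity lemma, and one concludes by a perturbation by $\delta_1 E_U$ and $\delta_2(g^*\bfM_Y-\bfM_U)$ that makes the pair big over $Z$ and applies BCHM again. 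You instead stay at the log resolution level and try to contract the exceptional excess directly over $Y$.

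Two steps in your argument are real gaps, and you identify them yourself at the end, but they are not minor. First, the non-positivity comparison $p^*(K_X+(B'+t_0B'')+\bfM'_X+t_0\bfM''_X)\geq q^*(K_Y+(B'_Y+t_0B''_Y)+\bfM'_Y+t_0\bfM''_Y)$ does \emph{not} follow from Lemma~\ref{lem: gongyo3.1}: the map $X\dashrightarrow Y$ there is obtained as a $(K_X+(B'+t_1B'')+\bfM'_X+t_1\bfM''_X)$-MMP for some fixed $t_1<t_0$, and a contracted ray negative for the $t_1$-divisor need not be non-positive for the $t_0$-divisor. The trivial-MMP argument of \cite[Lemma~3.18]{HL18} that you invoke is only available once one already has a minimal model (or at least nefness) of the $t_0$-pair on some birational model from which one can build the relevant Shokurov polytope; but in Lemma~\ref{lem: gmm2} the $t_0$-pair is only pseudo-effective, with no minimal model at hand (indeed that is exactly why the paper, in Theorem~\ref{thm: highdim dlt}, must \emph{assume} termination of flips before it can run the $\cite[Lemma~3.17/3.18]{HL18}$ argument). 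The paper's passage to the relative canonical model over $Y$ sidesteps this entirely: no information about $X\dashrightarrow Y$ being $t_0$-non-positive is ever needed.

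Second, you set $B_W=(p^{-1})_*(B'+t_0B'')+\sum_E E$ with coefficient $1$ on the $p$-exceptional primes, which makes $(W,B_W+\bfM'+t_0\bfM'')$ generalized dlt rather than klt, and then invoke a generalized analogue of Birkar's termination for MMP with scaling of very exceptional divisors (\cite[Theorem~3.5]{Bi12}). No such generalized statement is proved or cited in this paper or in \cite{HL18,HLiu19}, and you cannot perturb to the classical klt case by BCHM because the pair is dlt. If you instead use coefficient $1-\epsilon$ as the paper does, then $(W,B_W+\bfM)$ is generalized klt, trivially big over the birational base $Y$, and BCHM (via Lemma~\ref{lem: gmm}) gives the relative minimal/canonical model over $Y$ directly — this removes the need for the Birkar termination step, but does not remove gap one, because the identification $K_{W'}+B_{W'}+\bfM'_{W'}+t_0\bfM''_{W'}\sim_{\RR,Z}0$ still rests on the claimed effectivity of $F$. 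In short, the route you propose would require either proving a non-positivity statement that the paper deliberately avoids, or strengthening the hypotheses to include a termination assumption, and the paper's detour through the canonical model $U$ over $Y$ is precisely the device that makes neither necessary.
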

\begin{proof}
Upon replacing $B''$ and $\bfM''$ with $t_0B''$ and $t_0\bfM''$ respectively, we may assume that $t_0=1$.  
To simply  the notation further, we denote $B = B'+B''$ and $\bfM = \bfM'+\bfM''$.

Let $p:W\to X$ be a log resolution to which the $\rmb$-$\RR$-divisor $\bfM$ descends. We may assume that the induced birational map $q:W\dashrightarrow Y$ is a morphism. (Note that we are using the notation of Lemma~\ref{lem: gongyo3.1}.) Let $B_Y = B'_Y+B''_Y$. Since $B''_Y+\bfM''_Y$ is ample over $Z$ and $\rho(Y/Z) = 1$,  the divisor $B_Y + \bfM_Y$ is also ample over $Z$. Again by the fact that $\rho(Y/Z) = 1$, each of $B_Y$ and $ \bfM_Y$ is ample over $Z$ as soon as it is not numerically trivial over $Z$.

 Since $(X/$S$,B+\bfM)$ is generalized klt, we may choose a rational number $0<\epsilon\ll1$ and $B_W=p_{*}^{-1} B+(1-\epsilon)F_W$, $F_W$ being the sum of the reduced exceptional divisors over $X$, such that $$K_W+B_W+\bfM_W=p^{*}(K_X+B+\bfM_X)+G_W,$$ where $G_W$ is an $p$-exceptional effective $\RR$-divisor. Then the generalized klt pair $(W/S, B_W + \bfM)$ is a higher model of $(X/S,B + \bfM)$. We will show that the former has a good minimal model over $Z$.

By Lemma~\ref{lem: gmm}, $(W/$S$, B_W+\bfM)$ has a canonical model $(U/$S$, B_U+\bfM)$ over $Y$, where $B_U$ is the strict transform of $B_W$. We have $K_{U}+B_U+\bfM_U+E_U=g^{*}(K_{Y}+B_Y+\bfM_Y),$ where $g\colon U\rightarrow Y$ is the induced morphism and $E_U$ is some $g$-exceptional divisor. Since $-E_U$ is ample over $Y$, $E_U$ is effective and $\Supp(E_U) = \Exc(g)$.

We can choose two rational numbers $0<\delta_1, \delta_2\ll 1$ such that 
\begin{itemize}[leftmargin=*]
\item $(U/S, (B_U +\delta_1 E_U-\delta_2 (g^* \bfM_Y -M_U))+((1-\delta_2 ) \bfM +\delta_2  \overline\bfM_Y))$ is generalized klt, and
\item the boundary part $B_U +\delta_1 E_U-\delta_2 (g^* \bfM_Y -\bfM_U) \geq \epsilon g^* B_Y$ for some $0<\epsilon \ll1$.
\end{itemize}
Since at least one of $B_Y$ and $\bfM_Y$ is ample over $Z$, one sees easily that either $B_U +\delta_1 E_U+ \delta_2 (g^* \bfM_Y -M_U)$ or $(1-\delta_2) \bfM_U +\delta_2 g^* \bfM_Y$ is big over $Z$. By Lemma~\ref{lem: gmm}, $(U, (B_U +\delta_1 E_U-\delta_2 (g^* \bfM_Y -\bfM_U))+((1-\delta_2 ) \bfM +\delta_2\overline{\bfM}_Y))$ has a good minimal model $U\dashrightarrow V$ over $Z$.

Note that 
\begin{multline}
K_U+ (B_U +\delta_1 E_U - \delta_2 (g^* \bfM_Y -\bfM_U))+((1-\delta_2 ) \bfM_U +\delta_2 g^* \bfM_Y) \\ = K_U+B_U+\bfM_U +\delta_1 E_U
\end{multline}
Since  $K_{U}+B_U+\bfM_U+E_U\sim_{\RR,Z}0$, we have
	\begin{align*}
	(\frac{1}{\delta_1}-1)(K_{U}+B_U+\bfM_U)&\sim_{\RR,Z} \frac{1}{\delta_1}(K_{U}+B_U+\bfM_U)+E_U\\
	&\sim_{\RR,Z}\frac{1}{\delta_1}(K_{U}+B_U+\delta_1 E_U+ \bfM_U).
	\end{align*}
It follows that the $(K_{U}+B_U+\delta_1 E_U+\bfM_U)$-MMP is at the same time a $(K_{U}+B_U+\bfM_U)$-MMP, and $(V, B_V+ \bfM)$ is a good minimal model of $(U, B_U+\bfM)$ over $Z$, where $B_V$ is the strict transform of $B_U$. Obviously, $K_V+B_V+\bfM_V$ is not big over $Z$, and its relative Iitaka fibration has positive dimensional fibres.

The rational maps involved in the proof are as in the following commutative diagram:
$$\xymatrix@=2.5em{
	& W \ar[d]^{q}  \ar[dl]_{p}  \ar@{-->}[r]   & U\ar[dl]^{g}  \ar@{-->}[r]    &V  \ar[ddll]\\
	X\ar@{-->}[r]  &Y  \ar[d]^{f}     &    \\
	& Z       &&
}
$$
\end{proof}

Now we are ready to prove several numerical nonvanishing results in dimensions higher than two. 
\begin{thm}\label{thm: highdim}
Assume that $\text{Conjecture~\ref{conj: nonvanishing}}$ holds for projective generalized klt pairs in dimensions less than $n$. Let $(X,B+\bfM)$ be an $n$-dimensional projective generalized klt pair such that $\bfM$ is an $\RR_{>0}$-linear combination of nef Cartier $\rmb$-divisors. Suppose that $K_X+B+\bfM_X$ is pseudo-effective and $K_X+\bfM_X$ is not pseudo-effective. Then $K_X+B+\bfM_X$ is num-effective.
\end{thm}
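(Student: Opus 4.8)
The plan is to reduce the statement to a lower-dimensional instance of Conjecture~\ref{conj: nonvanishing} by combining the Mori-fibre-space construction of Lemmas~\ref{lem: gongyo3.1} and~\ref{lem: gmm2} with the generalized canonical bundle formula (Theorem~\ref{thm: cbf}) and the inductive hypothesis. First I would replace $X$ by a small $\QQ$-factorialization $\pi\colon X'\to X$, which exists because $(X,B+\bfM)$ is generalized klt (its $\QQ$-factorial generalized dlt modification extracts no divisor, all generalized log discrepancies being positive). Writing $B':=\pi^{-1}_*B$, we have $K_{X'}+B'+\bfM_{X'}=\pi^*(K_X+B+\bfM_X)$, the pair $(X',B'+\bfM)$ is $\QQ$-factorial generalized klt, $K_{X'}+B'+\bfM_{X'}$ is pseudo-effective, $K_{X'}+\bfM_{X'}$ is not, and since $\pi_*\pi^*=\mathrm{id}$ and pushforward sends effective (resp.\ pseudo-effective) classes to effective (resp.\ pseudo-effective) ones, it suffices to treat $(X',B'+\bfM)$. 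So I may assume $X$ is $\QQ$-factorial.

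Next I would apply Lemma~\ref{lem: gongyo3.1} with $B'=0$, $\bfM'=\bfM$, $B''=B$, $\bfM''=0$: the hypotheses hold because $(X,B+\bfM)$ is $\QQ$-factorial generalized dlt, $K_X+B+\bfM_X$ is pseudo-effective, and $K_X+\bfM_X$ is not. This produces a real number $0<t_0\le 1$, a birational contraction $\phi\colon X\dashrightarrow Y$, and a contraction $f\colon Y\to Z$ with $\dim Y>\dim Z$, $K_Y+t_0B_Y+\bfM_Y\sim_{\RR,f}0$, $(Y,t_0B_Y+\bfM_Y)$ generalized lc and $(Y,\bfM_Y)$ generalized klt. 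Since $(X,B+\bfM)$ is generalized klt, Lemma~\ref{lem: gmm2} then gives a higher model $p\colon W\to X$ of $(X,t_0B+\bfM)$ with $(W,B_W+\bfM)$ generalized klt and $K_W+B_W+\bfM_W=p^*(K_X+t_0B+\bfM_X)+G_W$ for an effective $p$-exceptional $\RR$-divisor $G_W$, such that $(W,B_W+\bfM)$ has a good minimal model $(V,B_V+\bfM)$ over $Z$ whose relative Iitaka fibration $h\colon V\to T$ over $Z$ has positive-dimensional fibres; in particular $\dim T<\dim V=n$, and $(V,B_V+\bfM)$ is still generalized klt.

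Then I would descend along $h$. Since $K_V+B_V+\bfM_V$ is semiample over $Z$ with $h$ its relative Iitaka fibration, $K_V+B_V+\bfM_V\sim_{\RR}h^*A'$ for some $\RR$-Cartier $\RR$-divisor $A'$ on $T$; in particular $K_V+B_V+\bfM_V\sim_{\RR,h}0$. Applying the (refined) generalized canonical bundle formula of Theorem~\ref{thm: cbf} to $h$ produces a projective generalized klt pair $(T,B_h+\bfM_h)$, with $\bfM_h$ again an $\RR_{>0}$-linear combination of nef Cartier $\rmb$-divisors, such that $K_V+B_V+\bfM_V\sim_{\RR}h^*(K_T+B_h+\bfM_{h,T})$. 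Now $K_X+t_0B+\bfM_X$ is pseudo-effective (it lies on the boundary of the pseudo-effective cone of $X$), so the displayed formula on $W$ shows $K_W+B_W+\bfM_W$ is pseudo-effective, hence so is its strict transform $K_V+B_V+\bfM_V$ on the minimal model $V$; and since pullback along the surjective $h$ reflects pseudo-effectivity of divisor classes, $K_T+B_h+\bfM_{h,T}$ is pseudo-effective. As $\dim T<n$, the inductive hypothesis (Conjecture~\ref{conj: nonvanishing} in dimension $\dim T$) furnishes an effective $\RR$-Cartier $\RR$-divisor $D_T$ on $T$ with $K_T+B_h+\bfM_{h,T}\equiv D_T$, so $K_V+B_V+\bfM_V\equiv h^*D_T\ge 0$ is num-effective. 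Comparing $(W,B_W+\bfM)$ with its minimal model $(V,B_V+\bfM)$ on a common resolution, $K_W+B_W+\bfM_W$ exceeds the pullback of $K_V+B_V+\bfM_V$ by an effective divisor (negativity lemma), so $K_W+B_W+\bfM_W$ is num-effective; pushing forward by $p$ and using $p_*G_W=0$ gives that $K_X+t_0B+\bfM_X=p_*(K_W+B_W+\bfM_W)$ is num-effective (numerical equivalence and effectivity pass through proper pushforward); and finally $K_X+B+\bfM_X=(K_X+t_0B+\bfM_X)+(1-t_0)B$ is num-effective, being the sum of a num-effective divisor and the effective divisor $(1-t_0)B$.

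The substantive input is encapsulated in Lemmas~\ref{lem: gongyo3.1} and~\ref{lem: gmm2}; within the argument above, the one delicate point is that the fibration $f\colon Y\to Z$ makes only $(Y,t_0B_Y+\bfM_Y)$ — which is generalized \emph{lc}, not klt — relatively trivial, so it cannot be fed directly into the klt inductive hypothesis. It is precisely the perturbation built into Lemma~\ref{lem: gmm2}, producing a generalized \emph{klt} good minimal model over $Z$ whose Iitaka fibration has \emph{positive-dimensional} fibres, that both restores the klt condition and forces the strict drop $\dim T<n$ needed to close the induction. A secondary point requiring care is that the moduli part of the canonical bundle formula must again be an $\RR_{>0}$-linear combination of nef Cartier $\rmb$-divisors, so that hypothesis (ii) of Conjecture~\ref{conj: nonvanishing} is available on $T$; this is guaranteed by the refined form of the formula in \cite{Fi18, HLiu19}.
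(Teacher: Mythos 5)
Your proposal is correct and follows essentially the same approach as the paper: reduce to the $\QQ$-factorial generalized klt case, form the threshold $t_0$, use Lemmas~\ref{lem: gongyo3.1} and~\ref{lem: gmm2} to produce a higher model with a good minimal model over $Z$ whose relative Iitaka fibration $h$ has positive-dimensional fibres, apply the generalized canonical bundle formula (Theorem~\ref{thm: cbf}) to $h$, feed the resulting lower-dimensional generalized klt pair into the inductive hypothesis, and push back to $X$. The only cosmetic differences are that you pass to a small $\QQ$-factorialization where the paper passes to an unspecified ``higher model,'' that you make the substitution $B'=0,\ \bfM'=\bfM,\ B''=B,\ \bfM''=0$ in Lemma~\ref{lem: gongyo3.1} explicit, and that you spell out the verification that $K_T+B_h+\bfM_{h,T}$ is pseudo-effective and that the moduli part of the canonical bundle formula is again an $\RR_{>0}$-combination of nef Cartier $\rmb$-divisors --- details the paper takes for granted.
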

\begin{proof}
By Lemma~\ref{lem: gmm}, $X$ has rational singularities. Concerning numerical nonvanishing, we can replace $X$ with a higher model and assume that $(X,B)$ is a $\QQ$-factorial klt pair. Let $t_0:=\inf\{t\geq 0\mid K_X+tB+\bfM_X\text{ is pseudo-effective}\}$. By Lemma~\ref{lem: gmm2}, there is a higher model $(W, B_W+\bfM)$ of $(X, t_0B + \bfM)$ admitting a good minimal model $(V, B_V+\bfM)$ over some variety $Z$, where $B_V$ is the strict transform of $B_W$. 
	
Let $h\colon V\rightarrow \tilde Z$ be the relative Iitaka fibration of $K_V+B_V+\bfM_V$. Then $\dim \tilde Z < \dim V$.  By Theorem~\ref{thm: cbf} there is a generalized klt pair $({\tilde Z}, B_h+\bfM_h)$ such that $K_V+B_V+\bfM_V\sim_{\RR}  h^*(K_{\tilde Z}+B_h+\bfM_{h, \tilde Z})$. Note that $K_{\tilde Z}+B_h+\bfM_{h,\tilde Z}$ is necessarily pseudo-effective. Since $\dim \tilde Z<\dim X$, we know by hypothesis that $K_{\tilde Z}+B_{h}+\bfM_{h, \tilde Z}$ is num-effective. Therefore, $K_V+B_V+\bfM_V$ is num-effective. It follows that $K_W+B_W+\bfM_W$ is num-effective. Pushing down to $X$ we deduce that $K_X+t_0B+\bfM_X$, and a fortiori $K_X+B+\bfM_X$, is num-effective.
\end{proof}

Assuming the termination of flips for $\Qq$-factorial dlt pairs, Theorem~\ref{thm: highdim} readily extends to generalized lc pairs with rational singularities as follows:
\begin{thm}\label{thm: highdim dlt}
Assume that $\text{Conjecture~\ref{conj: nonvanishing}}$ holds in dimensions less than $n$. 
Assume that the termination of flips for $n$-dimensional projective $\Qq$-factorial dlt pairs holds. 

Let $(X,B+\bfM)$ be an $n$-dimensional projective generalized lc pair with rational singularities such that $\bfM$ is an $\RR_{>0}$-linear combination of nef Cartier $\rmb$-divisors. Suppose that $K_X+B+\bfM_X$ is pseudo-effective and $K_X+\bfM_X$ is not pseudo-effective. Then there exists an effective $\RR$-Cartier $\RR$-divisor $D$ such that $K_X+B+\bfM_X\equiv D$.
\end{thm}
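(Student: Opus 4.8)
The plan is to mirror the proof of Theorem~\ref{thm: highdim}, adding one layer that disposes of the non-klt centres. First I would pass to a $\QQ$-factorial generalized dlt modification $h\colon (Y, B_Y+\bfM)\to (X, B+\bfM)$, which is crepant, so that $K_Y+B_Y+\bfM_Y = h^*(K_X+B+\bfM_X)$. It then suffices to prove that $K_Y+B_Y+\bfM_Y$ is num-effective: from $K_Y+B_Y+\bfM_Y\equiv D_Y\geq 0$ one pushes forward to obtain an effective $\RR$-Cartier $\RR$-divisor $D$ on $X$ with $K_X+B+\bfM_X\equiv D$, and this descent is where the rational-singularities hypothesis on $X$ is used. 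Thus we may assume $(X, B+\bfM)$ is itself $\QQ$-factorial generalized dlt, and by Proposition~\ref{prop: RtoQ} that $B$ and the coefficients of $\bfM$ are rational.

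Next I would scale the round-down. Set $\Delta := \llcorner B\lrcorner$ and $B':=B-\Delta$, so that $(X, B'+\bfM)$ is generalized klt, and put
\[
t_0 := \inf\{\,t\geq 0\mid K_X+B'+t\Delta+\bfM_X\ \text{is pseudo-effective}\,\}\in[0,1];
\]
note that $K_X+B'+t_0\Delta+\bfM_X$ is pseudo-effective, and that $(X, B'+t\Delta+\bfM)$ is generalized klt whenever $t<1$, since then $B'+t\Delta$ has all coefficients strictly less than $1$. If $t_0=0$, then $K_X+B'+\bfM_X$ is pseudo-effective while $K_X+\bfM_X$ is not, so Theorem~\ref{thm: highdim}, applied to the generalized klt pair $(X, B'+\bfM)$, gives that $K_X+B'+\bfM_X$ is num-effective; adding $\Delta\geq 0$ then finishes this case. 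If $0<t_0<1$, the critical pair $(X, (B'+t_0\Delta)+\bfM)$ is generalized klt and $K_X+B'+\bfM_X$ is not pseudo-effective, so I would feed it into Lemmas~\ref{lem: gongyo3.1} and \ref{lem: gmm2} (boundary decomposition $B'+t_0\Delta$, nef part $\bfM$), exactly as in the proof of Theorem~\ref{thm: highdim}, to obtain a higher model of it admitting, over a base $Z$, a good minimal model $V$ whose relative Iitaka fibration $g\colon V\to\tilde Z$ has positive-dimensional fibres. If $t_0=1$, the critical pair $(X, B+\bfM)$ is merely generalized dlt; here I would invoke the assumed termination of flips for $n$-dimensional $\QQ$-factorial dlt pairs to run the appropriate minimal model program for this generalized dlt pair and reach, over a base $Z$, a generalized lc pair $(V, B_V+\bfM)$ with a fibration $g\colon V\to\tilde Z$, $\dim\tilde Z<\dim V=n$, and $K_V+B_V+\bfM_V\sim_{\RR, g}0$.

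In the last two cases the generalized canonical bundle formula (Theorem~\ref{thm: cbf}) produces a generalized (klt, resp.~lc) pair $(\tilde Z, B_g+\bfM_g)$ with $K_V+B_V+\bfM_V\sim_{\RR}g^*(K_{\tilde Z}+B_g+\bfM_{g,\tilde Z})$ and with $K_{\tilde Z}+B_g+\bfM_{g,\tilde Z}$ pseudo-effective; since $\dim\tilde Z<n$, Conjecture~\ref{conj: nonvanishing} (assumed below dimension $n$) shows this divisor is num-effective, hence so is $K_V+B_V+\bfM_V$, and pulling this back up the birational maps and then pushing down to $X$ yields num-effectivity of $K_X+B'+t_0\Delta+\bfM_X$ (resp.~of $K_X+B+\bfM_X$); when $t_0<1$, adding $(1-t_0)\Delta\geq 0$ completes the argument. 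The main obstacle is the case $t_0=1$: the critical pair then genuinely carries non-klt centres, so the generalized klt machinery of Lemma~\ref{lem: gmm2}, which rests on \cite{BCHM10}, is not available, and one must run a minimal model program for a generalized dlt pair — which is exactly what forces the termination-of-flips hypothesis. A secondary technical point is the faithful descent of num-effectivity from the dlt modification back to $X$, which is where the rational-singularities assumption enters.
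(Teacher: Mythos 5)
Your plan tracks the general idea of reducing to a fibration and invoking the generalized canonical bundle formula, but the decisive case is handled too loosely, and it is precisely the case that distinguishes this theorem from Theorem~\ref{thm: highdim}.

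The paper's own proof is in fact uniform across all pseudo-effective thresholds: it sets $t_0 = \inf\{t\ge 0 : K_X+tB+\bfM_X \text{ pseudo-effective}\}$ (scaling the \emph{whole} boundary rather than only the round-down) and, crucially, uses the termination hypothesis to produce a \emph{log terminal model} $\phi\colon X\dashrightarrow X'$ of the generalized dlt pair $(X, t_0 B+\bfM)$ via \cite[Theorem~1.6]{HL18}. On $X'$ the divisor $K_{X'}+t_0 B_{X'}+\bfM_{X'}$ is nef; one then scales down to some $t_1 < t_0 \le 1$, observes that $(X', t_1 B_{X'}+\bfM)$ is generalized klt and that $K_{X'}+t_1 B_{X'}+\bfM_{X'}$ is not pseudo-effective, runs a klt MMP to a Mori fibre space $f\colon Y\to Z$, and invokes \cite[Lemma~3.17]{HL18} to see this MMP is $(K+t_0B+\bfM)$-trivial, so that $K_Y+t_0 B_Y+\bfM_Y\sim_{\RR, f} 0$ and the canonical bundle formula can be applied. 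This two-step procedure --- termination to reach a nef model, then klt scaling on that nef model plus the triviality lemma --- is the mechanism you need and it is missing from your outline.

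In your case $t_0=1$ you write that you would "invoke the assumed termination of flips \dots\ and reach \dots\ a fibration $g\colon V\to\tilde Z$ with $\dim\tilde Z<\dim V$ and $K_V+B_V+\bfM_V\sim_{\RR,g}0$." This is where the gap lies. Since $K_X+B+\bfM_X$ is pseudo-effective, termination of the $(K_X+B+\bfM_X)$-MMP produces a \emph{minimal} model, not a Mori fibre space, and certainly not an Iitaka fibration with the relative triviality you assert (that would amount to abundance for the nef model). Nor can you use Lemma~\ref{lem: gmm2} here, because it requires the pair to be generalized klt, which $(X, B+\bfM)$ is not when $\Delta=\llcorner B\lrcorner\neq 0$. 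You must first pass to the log terminal model (this is what termination buys), then scale $\Delta$ down to $t_1<1$ on that model to enter the klt regime, and only then run the klt MMP to a Mori fibre space; the $(K+B+\bfM)$-triviality of that MMP, which gives $K_Y+B_Y+\bfM_Y\sim_{\RR, Z}0$ on the resulting MFS $Y\to Z$, is exactly \cite[Lemma~3.17]{HL18} and it does not come for free. Your sub-cases $t_0 = 0$ (apply Theorem~\ref{thm: highdim} directly) and $0<t_0<1$ (feed $(X, (B'+t_0\Delta)+\bfM)$ into Lemmas~\ref{lem: gongyo3.1} and \ref{lem: gmm2}) are sound, but they are also unnecessary once the termination mechanism is in place; the paper handles all of $0<t_0\le 1$ in one stroke and never case-splits on whether the threshold pair is klt.
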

\begin{proof}
Since $(X, B+\bfM)$ has rational singularities, its numerical nonvanishing is implied by that of its $\QQ$-factorial dlt modification (\cite[Lemma~2.12]{Na04}). Thus we can replace $(X,B+\bfM)$ with a $\QQ$-factorial dlt modification and assume that itself is $\Qq$-factorial generalized dlt.
	
Let $t_0=\inf\{t\ge0\mid K_X+tB+\bfM_X \text{ is pseudo-effective}\}$. Then $0<t_0\leq 1$. Since we assume the termination of flips for $n$-dimensional $\Qq$-factorial dlt pairs, by \cite[Theorem 1.6]{HL18}, $K_X+t_0B+\bfM_X$ has a log terminal model, $\phi: X\dashrightarrow X'$. In particular, $\phi$ is $(K_X+t_0B+\bfM_X)$-negative. Thus there exists a positive real number $\epsilon>0$ such that, for any $t\in[t_0-\epsilon,t_0)$, $\phi$ is $(K_X+tB+\bfM_X)$-negative; obviously, $K_{X'}+tB_{X'}+\bfM_{X'}$ is not pseudo-effective, where $B_{X'}$ is the strict transform of $B$, and $(X,tB+\bfM)$ is generalized klt. 

By \cite[Lemma 3.17]{HL18}, there exists a positive real number $t_1\in (t_0-\epsilon,t_0)$, such that any $(K_{X'}+t_1B_{X'}+\bfM_{X'})$-MMP is $(K_X+t_0B_{X'}+\bfM_{X'})$-trivial. By Lemma \ref{lem: gmm}, we may run a $(K_{X'}+t_1B_{X'}+\bfM_{X'})$-MMP with scaling of an ample divisor and end with a Mori fiber space, $f:Y\to Z$ of $K_Y+t_1B_Y+\bfM_{Y}$, where $B_Y$ is the strict transform of $B$. By Theorem \ref{thm: cbf}, $K_Y+t_0B_Y+\bfM_{Y}\sim_{\RR} f^*(K_{Z}+t_0B_f+\bfM_{f, Z})$ for some generalized lc pair $({Z}, t_0B_f+\bfM_f)$. Since we assume $\text{Conjecture~\ref{conj: nonvanishing}}$ holds in dimensions less than $n$, $K_Y+t_0B_Y+\bfM_{Y}$ is num-effective. Hence $K_X+t_0B+\bfM_{X}$ is also num-effective.
\end{proof}

\begin{cor}\label{thm: CYnonvanishing threefold}
	Let $(X, B+\bfM)$ be a generalized lc threefold with rational singularities such that $\bfM$ is an $\RR_{>0}$-linear combination of nef Cartier $\rmb$-divisors. Suppose that
	\begin{enumerate}[leftmargin=*]
		\item[(i)]$K_X+B+\bfM_X$ is pseudo-effective, and
		\item[(ii)] there exists a generalized lc pair $(X, C+\bfM)$ with $K_X+C+\bfM_X\equiv 0$.
	\end{enumerate}
	Then $K_X+B+\bfM_X$ num-effective.
\end{cor}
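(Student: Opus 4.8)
The plan is to reduce to Theorem~\ref{thm: highdim dlt} by splitting into two cases according to whether $K_X+\bfM_X$ is pseudo-effective. First I would carry out the standard preliminary reduction: replace $(X,B+\bfM)$ by a $\QQ$-factorial generalized dlt modification $\pi\colon Y\to X$, exactly as at the start of the proof of Theorem~\ref{thm: highdim dlt}, so that num-effectivity of $\pi^*(K_X+B+\bfM_X)=K_Y+B_Y+\bfM_Y$ implies that of $K_X+B+\bfM_X$. The same modification also applies to the auxiliary pair: setting $K_Y+C_Y+\bfM_Y:=\pi^*(K_X+C+\bfM_X)$, the pair $(Y,C_Y+\bfM)$ is again generalized lc with $K_Y+C_Y+\bfM_Y\equiv 0$, and $C_Y\geq 0$, since $(X,C+\bfM)$ is generalized lc and hence every $\pi$-exceptional prime divisor occurs in $C_Y$ with coefficient $1-a_E(X,C+\bfM)\geq 0$. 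Renaming, I may therefore assume that $X$ is $\QQ$-factorial and $(X,B+\bfM)$ is generalized dlt; in particular $X$ has rational singularities, $K_X+\bfM_X$ is $\RR$-Cartier, and $K_X+\bfM_X\equiv -C$ with $C\geq 0$.

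If $K_X+\bfM_X$ is not pseudo-effective, then all the hypotheses of Theorem~\ref{thm: highdim dlt} hold with $n=3$: Conjecture~\ref{conj: nonvanishing} holds in dimension one trivially (a divisor of nonnegative degree on a curve is num-effective) and in dimension two by Theorem~\ref{thm: nv}, and the termination of flips for $\QQ$-factorial dlt threefolds is known. Hence $K_X+B+\bfM_X$ is num-effective. If, on the other hand, $K_X+\bfM_X$ is pseudo-effective, then since $K_X+\bfM_X\equiv -C$ with $C$ effective, and hence pseudo-effective, both $C$ and $-C$ are pseudo-effective; as the pseudo-effective cone of a projective variety contains no line, this forces $C\equiv 0$. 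Then $K_X+\bfM_X\equiv 0$, so $K_X+B+\bfM_X\equiv B\geq 0$ is num-effective, with $B$ being $\RR$-Cartier because $X$ is $\QQ$-factorial. This exhausts both cases.

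The bulk of the work is already packaged in Theorem~\ref{thm: highdim dlt} (which itself rests on Theorem~\ref{thm: nv}, the generalized canonical bundle formula, and the Mori fibre space construction of Lemmas~\ref{lem: gongyo3.1} and \ref{lem: gmm2}), so I do not anticipate a serious obstacle; the only points calling for care are the simultaneous bookkeeping for the two pairs under the dlt modification and checking that the input hypotheses of Theorem~\ref{thm: highdim dlt} (notably termination of threefold flips, together with Conjecture~\ref{conj: nonvanishing} in dimensions one and two) are indeed available for $n=3$. The numerically trivial case $K_X+\bfM_X$ pseudo-effective is entirely soft.
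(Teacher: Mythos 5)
Your proof follows the same reduction to Theorem~\ref{thm: highdim dlt} as the paper, which splits simply on $C=0$ versus $C\neq 0$ and in the latter case declares $K_X+\bfM_X\equiv -C$ not pseudo-effective. Your version, which first passes to a $\QQ$-factorial generalized dlt modification $\pi\colon Y\to X$ of $(X,B+\bfM)$ in order to make $K_Y+\bfM_Y$ honestly $\RR$-Cartier, is a reasonable instinct. However, it contains a genuine gap in the claim that $C_Y\geq 0$.

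The issue is that the $\pi$-exceptional prime divisors $E$ are chosen with $a_E(X,B+\bfM)=0$, \emph{not} $a_E(X,C+\bfM)=0$, and generalized log canonicity of $(X,C+\bfM)$ only gives $a_E(X,C+\bfM)\geq 0$, with no upper bound of $1$. Indeed $a_E(X,C+\bfM)=a_E(X,B+\bfM)+\mult_E\pi^*(B-C)=\mult_E\pi^*(B-C)$, which can exceed $1$: for instance, if $B$ has two transverse components through a smooth point with coefficient $1$ and $C$ vanishes there, then the blowup exceptional divisor $E$ has $a_E(X,B+\bfM)=0$ but $a_E(X,C+\bfM)=2$, so $\mult_E C_Y=-1<0$. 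Thus $C_Y$ need not be effective, $(Y,C_Y+\bfM)$ need not be a generalized lc pair (it is only sub-glc), and in your ``soft'' Case~2 the assertion that $C_Y$ is pseudo-effective because effective fails, so the conclusion $C_Y\equiv 0$ does not follow. The gap is repairable: in Case~2, since $-C_Y\equiv K_Y+\bfM_Y$ is pseudo-effective and $\pi^*H$ is big and nef for $H$ ample on $X$, the projection formula gives $-C\cdot H^{n-1}=-C_Y\cdot(\pi^*H)^{n-1}\geq 0$, while $C\geq 0$ forces $C\cdot H^{n-1}\geq 0$; hence $C=0$, $K_X+\bfM_X=K_X+C+\bfM_X$ is $\RR$-Cartier and $\equiv 0$, and $K_X+B+\bfM_X\equiv B\geq 0$. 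Alternatively, split directly on $C=0$ versus $C\neq 0$ as the paper does, reserving the dlt modification for the application of Theorem~\ref{thm: highdim dlt} itself.
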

\begin{proof}
If $C=0$, then $K_X+B+\bfM_X\equiv B$ which is effective. We may assume $C\neq 0$, so $K_X+\bfM$ is not pseudo-effective. Then the assertion of the corollary follows from Theorem \ref{thm: highdim dlt}.
\end{proof}

Up to scaling the nef part we establish the numerical nonvanishing for generalized lc threefolds with rational singularities:
\begin{thm}\label{thm: dim3t}
Let $(X,B+\bfM)$ be a generalized lc threefold with rational singularities such that $\bfM$ is an $\RR_{>0}$-linear combination of nef Cartier $\rmb$-divisors. Suppose that $K_X+B+\bfM_X$ is pseudo-effective. Then $K_X+B+t_0\bfM_X$ is num-effective, where $t_0:=\inf\{t\geq 0\mid K_X+B+t\bfM_X \text{ is pseudo-effective}\}.$
\end{thm}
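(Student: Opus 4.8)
The plan is to run the argument of Theorem~\ref{thm: highdim dlt} with the roles of $B$ and $\bfM$ exchanged, taking advantage of the fact that in dimension three every ingredient is available unconditionally: Conjecture~\ref{conj: nonvanishing} in dimension two is Theorem~\ref{thm: nv}, the termination of flips for $\QQ$-factorial (generalized) dlt threefolds holds, and the minimal model program for generalized lc threefolds is known. First I would reduce to the case that $(X,B+\bfM)$ is $\QQ$-factorial generalized dlt: because $X$ has rational singularities, the num-effectivity of $K_X+B+t_0\bfM_X$ follows from that of the corresponding divisor on a $\QQ$-factorial generalized dlt modification $g\colon X'\to X$ (by the negativity lemma $K_{X'}+B_{X'}+t_0\bfM_{X'}=g^*(K_X+B+t_0\bfM_X)+E$ with $E\ge 0$ $g$-exceptional, and pushforward preserves num-effectivity; cf.~\cite[Lemma~2.12]{Na04}), and the same computation shows that the pseudo-effective threshold $t_0$ is unchanged. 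So I may assume $(X,B+\bfM)$ is $\QQ$-factorial generalized dlt; then $K_X+B$ is $\RR$-Cartier and $(X,B)$ is log canonical (negativity lemma, as in the proof of Lemma~\ref{lem: glc to lc}). Since $K_X+B+\bfM_X$ is pseudo-effective we have $0\le t_0\le 1$. If $t_0=0$ then $K_X+B=K_X+B+t_0\bfM_X$ is pseudo-effective, and its num-effectivity (indeed, $\RR$-linear equivalence to an effective divisor) is the nonvanishing theorem for projective log canonical threefolds, which follows from log abundance in dimension three. So from now on I would assume $t_0>0$, so that $K_X+B$ is not pseudo-effective while $K_X+B+t_0\bfM_X$ is.

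The next step mimics Case~2 of the proof of Theorem~\ref{thm: nv}. Fix $t_1\in(0,t_0)$ with $t_0-t_1$ sufficiently small. Then $(X,B+t_1\bfM)$ is still $\QQ$-factorial generalized dlt --- scaling $\bfM$ down only raises the generalized log discrepancies, and leaves the pair unchanged near its generalized nonklt centers, where $\bfM$ descends --- and $K_X+B+t_1\bfM_X$ is not pseudo-effective. Hence by \cite[Lemma~4.4]{BZ16} together with the termination of flips for threefolds we may run a $(K_X+B+t_1\bfM_X)$-MMP with scaling of an ample divisor, which terminates with a Mori fibre space $h\colon Y\to Z$ of $K_Y+B_Y+t_1\bfM_Y$, where $B_Y$ is the strict transform of $B$ and $\dim Z<\dim Y=3$. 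By \cite[Lemma~3.18]{HL18} this MMP is $(K_X+B+t_0\bfM_X)$-trivial, and, exactly as in the proof of Lemma~\ref{lem: gongyo3.1} (using the Global ACC \cite[Theorem~1.6]{BZ16} to force the relevant threshold to equal the rational number $t_0$), one gets $K_Y+B_Y+t_0\bfM_Y\sim_{\RR,h}0$.

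Now I would invoke the generalized canonical bundle formula, Theorem~\ref{thm: cbf}: there is a generalized lc pair $(Z,B_h+\bfM_h)$, with $\bfM_h$ again an $\RR_{>0}$-linear combination of nef Cartier $\rmb$-divisors, such that $K_Y+B_Y+t_0\bfM_Y\sim_{\RR}h^*(K_Z+B_h+\bfM_{h,Z})$. Since the left-hand side is pseudo-effective, so is $K_Z+B_h+\bfM_{h,Z}$; and $\dim Z\le 2$, so by Theorem~\ref{thm: nv} the divisor $K_Z+B_h+\bfM_{h,Z}$, and therefore $K_Y+B_Y+t_0\bfM_Y$, is num-effective. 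Finally, because $X\dashrightarrow Y$ is $(K_X+B+t_0\bfM_X)$-trivial, the traces of $K_X+B+t_0\bfM_X$ and $K_Y+B_Y+t_0\bfM_Y$ agree on a common resolution, so num-effectivity descends back to $K_X+B+t_0\bfM_X$, completing the proof.

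The step I expect to be the main obstacle is making precise, when $t_0>0$, that the $(K_X+B+t_1\bfM_X)$-MMP is indeed $(K_X+B+t_0\bfM_X)$-trivial and that $K_Y+B_Y+t_0\bfM_Y$ becomes relatively $\RR$-trivial over $Z$ at exactly the threshold value $t_0$ --- this is the one place where the pseudo-effective threshold is used in an essential way, and it rests on the Global ACC for numerically trivial generalized lc pairs, just as in Lemma~\ref{lem: gongyo3.1}. A secondary point, in contrast with Theorem~\ref{thm: highdim dlt}, is that scaling $\bfM$ rather than $B$ does not turn a generalized dlt pair into a generalized klt one, so Lemma~\ref{lem: gmm}(iii) cannot be used to produce the Mori fibre space and one must instead run the generalized lc MMP of \cite{BZ16} directly; this is harmless in dimension three.
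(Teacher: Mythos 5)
Your proposal follows the same overall architecture as the paper's proof: $\QQ$-factorial generalized dlt modification, split into $t_0=0$ and $t_0>0$, run an MMP with scaling to a Mori fibre space, invoke the generalized canonical bundle formula (Theorem~\ref{thm: cbf}), and conclude by the surface case (Theorem~\ref{thm: nv}). But you skip an intermediate step that the paper's proof (and likewise that of Theorem~\ref{thm: highdim dlt}) carries out explicitly and for a reason. After passing to the dlt modification, the paper first uses the termination of flips for lc threefolds together with \cite[Theorem~1.6]{HL18} to run a $(K_X+B+t_0\bfM_X)$-MMP to a \emph{log terminal model} $\phi\colon X\dashrightarrow X'$, where $K_{X'}+B_{X'}+t_0\bfM_{X'}$ is \emph{nef}. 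It then chooses $t_1\in(t_0-\epsilon,t_0)$ so that $\phi$ remains $(K_X+B+t_1\bfM_X)$-negative, and only \emph{then} applies \cite[Lemma~3.18]{HL18} on $X'$ to conclude that any $(K_{X'}+B_{X'}+t_1\bfM_{X'})$-MMP is $(K_{X'}+B_{X'}+t_0\bfM_{X'})$-trivial --- because that divisor is nef on $X'$, triviality of the subsequent MMP forces $K_Y+B_Y+t_0\bfM_Y$ to stay nef and in particular to become $\equiv_Z 0$ at the Mori fibre contraction. You instead invoke \cite[Lemma~3.18]{HL18} directly on the dlt modification $X$, where $K_X+B+t_0\bfM_X$ is merely pseudo-effective; that is not the setting in which the paper uses the lemma, and it is precisely to land in a nef situation that the paper constructs $X'$ first.

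Your backup appeal to ``exactly as in the proof of Lemma~\ref{lem: gongyo3.1}'' via Global ACC is the right idea, but as written it does not close the gap: the Global ACC argument in Lemma~\ref{lem: gongyo3.1} needs an increasing \emph{sequence} $t_i\nearrow t_0$, with Mori fibre spaces $Y_i\to Z_i$ and thresholds $\eta_i\in(t_i,t_0]$, so that $\eta_i\to t_0$ and finiteness from \cite[Theorem~1.6]{BZ16} forces $\eta_i=t_0$ for $i\gg 0$. Running a single MMP at a single $t_1$, you obtain one $\eta\in(t_1,t_0]$ and no mechanism to rule out $\eta<t_0$. So you must commit to one of two repairs: either (a) follow the paper and first build the log terminal model $X'$ of $(X,B+t_0\bfM)$ before applying \cite[Lemma~3.18]{HL18}, or (b) drop \cite[Lemma~3.18]{HL18} entirely and apply Lemma~\ref{lem: gongyo3.1} as a black box with $B'=B$, $B''=0$, $\bfM'=0$, $\bfM''=\bfM$; its conclusion (iv) is exactly $K_Y+B_Y+t_0\bfM_Y\sim_{\RR,f}0$, and the termination it needs holds in dimension three since any $(K_X+B+t\bfM_X)$-MMP is automatically a $(K_X+B)$-MMP. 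Your final remark --- that scaling $\bfM$ does not produce a generalized klt pair, so Lemma~\ref{lem: gmm}(iii) is unavailable and one must run the generalized lc MMP --- is correct, but that is a separate issue from the one the paper solves by passing to $X'$, and it is indeed harmless in dimension three.
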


\begin{proof}
	If $t_0=0$, then $K_X+B$ is pseudo-effective, and the theorem follows from the nonvanishing theorem for projective lc threefolds (\cite{Sho96,KMM94}).
	
	If $t_0>0$, then $K_X +B$ is not pseudo-effective. Replacing $(X,B+\bfM)$ with a $\QQ$-factorial generalized dlt modification, we can assume that $(X,B+\bfM)$ is itself a $\QQ$-factorial generalized dlt threefold. By the termination of lc threefolds (\cite{Sho96}),  $K_X+B+t_0\bfM_X$ has a log terminal model $\phi: X\dashrightarrow X'$ (\cite[Theorem 1.6]{HL18}).
 In particular, $\phi$ is $(K_X+B+t_0\bfM_X)$-negative. Thus there exists a positive real number $\epsilon>0$, such that $\phi$ is $(K_X+B+t\bfM_X)$-negative and $(K_{X'}+B_{X'}+t\bfM_{X'})$ is not pseudo-effective for any $t\in[t_0-\epsilon,t_0)$, where $B_{X'}$ is the strict transform of $B$. 
	
	By \cite[Lemma 3.18]{HL18}, there exists a positive real number $t_1\in (t_0-\epsilon,t_0)$, such that any $(K_{X'}+B_{X'}+t_1\bfM_{X'})$-MMP is $(K_X+B_{X'}+t_0\bfM_{X'})$-trivial. By \cite[Lemma 3.5]{HL18} and \cite{BCHM10}, we may run a $(K_{X'}+B_{X'}+t_1\bfM_{X'})$-MMP with scaling of an ample divisor and end with a Mori fiber space, $f:Y\to Z$ of $K_Y+B_Y+t_1\bfM_{Y}$, where $B_Y$ is the strict transform of $B$. By Theorem~\ref{thm: cbf} there is a generalized lc pair $({Z}, B_f+\bfM_f)$, such that $K_Y+B_Y+t_0\bfM_Y\sim_{\RR} f^*(K_{Z}+t_0B_f+\bfM_{f, Z})$. Since $K_{Z}+B_{f}+\bfM_{f, Z}$ is necessarily pseudo-effective and $\dim \tilde Z\leq 2$, the divisor $K_{Z}+B_{f}+\bfM_{f, Z}$ is num-effective by Theorem~\ref{thm: nv}. Therefore, $K_Y+B_Y+t_0\bfM_Y$ is num-effective. Hence $K_X+B+t_0\bfM_{X}$ is also num-effective.
\end{proof}


\begin{thebibliography}{99}
\bibitem[Al93]{Al93}
Valery Alexeev,  \emph{Two two-dimensional terminations.} Duke Math. J. {\bf 69} (1993), no. 3, 527--545.
\bibitem[BHPV04]{BHPV04}
Wolf P. Barth, Klaus Hulek, Chris A. M. Peters and Antonius Van de Ven,  \emph{Compact complex surfaces. Second edition.} Ergebnisse der Mathematik und ihrer Grenzgebiete. 3. Folge. volume {\bf 4}, Springer-Verlag, Berlin, 2004. xii+436 pp.
\bibitem[Ba98]{Ba98}
Thomas Bauer, \emph{On the cone of curves of an abelian variety}. Amer. J. Math. {\bf 120} (1998), no. 5, 997--1006.
\bibitem[Bi12]{Bi12}Caucher Birkar, \emph{Existence of log canonical flips and a special LMMP}. Publ. Math. Inst. Hautes \'Etudes Sci. {\bf 115} (2012), 325--368.
\bibitem[Bi19]{Bi19}
Caucher Birkar, \emph{Anti--pluricanonical systems on {F}ano varieties.} Ann. of Math. (2) {\bf 190} (2019), no. 2, 345--463.
\bibitem[BCHM10]{BCHM10}
Caucher Birkar, Paolo Cascini, Christopher Hacon and  James McKernan,  \emph{Existence of minimal models for varieties of log general type}. J. Amer. Math. Soc. {\bf 23} (2010), no. 2, 405--468.
\bibitem[BC15]{BC15}
Caucher Birkar, Jungkai Alfred Chen, \emph{Varieties fibred over abelian varieties with fibres of log general type}. Adv. Math. {\bf 270} (2015), 206--222.
\bibitem[BH14]{BH14}
Caucher Birkar and Zhengyu Hu, \emph{Polarized pairs, log minimal models, and Zariski decompositions.} Nagoya Math. J. {\bf 215} (2014), 203--224.
\bibitem[BZ16]{BZ16}
Caucher Birkar and De-Qi Zhang,  \emph{Effectivity of Iitaka fibrations and pluricanonical systems of polarized pairs}. Publ. Math. Inst. Hautes \'Etudes Sci. {\bf 123} (2016), 283--331.
\bibitem[Bl95]{Bl95}
Raimund Blache, \emph{Riemann--Roch theorem for normal surfaces and applications.} Abh. Math. Sem. Univ. Hamburg {\bf 65} (1995), 307--340.
\bibitem[Br42]{Br42}
Alfred Brauer,  \emph{On a problem of partitions.} Amer. J. Math. {\bf 64} (1942), 299--312.
\bibitem[CP90]{CP90}Fr\'{e}d\'{e}ric Campana and Thomas Peternell, \emph{Algebraicity of the ample cone of projective varieties}, J. Reine Angew. Math. {\bf 407} (1990), 160--166.
\bibitem[CCP08]{CCP08}
Fr\'ed\'eric Campana, Jungkai A. Chen and Thomas Peternell, \emph{Strictly nef divisors.} Math. Ann. {\bf 342} (2008), no. 3, 565--585
\bibitem[DHP13]{DHP13}Jean-Pierre Demailly, Christopher D. Hacon and Mihai P\u aun,  \emph{Extension theorems, nonvanishing and the existence of good minimal models}. Acta Math. {\bf 210} (2013), no. 2, 203--259.
\bibitem[DL15]{DL15}
Tobias Dorsch and Vladimir Lazi\'c, \emph{ A note on the abundance conjecture.} Algebr. Geom. {\bf 2} (2015), no. 4, 476--488.
\bibitem[Fi18]{Fi18}Stefano Filipazzi, \emph{On a generalized canonical bundle formula and generalized adjunction}. arXiv:1807.04847, 2018.
\bibitem[Fu12]{Fu12}
Osamu Fujino,  \emph{Minimal model theory for log surfaces.} Publ. Res. Inst. Math. Sci. {\bf 48} (2012), no. 2, 339--371.
\bibitem[Fu17]{Fu17} 
 Osamu Fujino, \emph{Foundations of the minimal model program.}  MSJ Memoirs, {\bf 35}. Mathematical Society of Japan, Tokyo, 2017. xv+289 pp.
\bibitem[Fu18]{Fu18} Osamu Fujino, \emph{Fundamental properties of basic slc-trivial fibrations}, arXiv:1804.11134, 2018
\bibitem[G15]{G15}Yoshinori Gongyo, \emph{Remarks on the nonvanishing conjecture}. Algebraic geometry in east Asia--Taipei 2011, 107--116, Adv. Stud. Pure Math., {\bf 65}, Math. Soc. Japan, Tokyo, 2015.
	\bibitem[HM18]{HM18}
C.~D. Hacon and J.~Moraga.
\newblock \emph{On weak Zariski decompositions and termination of flips.}
\newblock {\em arXiv:1805.01600}, 2018.
	\bibitem[HL17]{HL17}
Jingjun Han and Zhan Li,
\newblock \emph{On {F}ujita's conjecture for pseudo-effective thresholds.}
\newblock {\em arXiv:1705.08862}, 2017.

\bibitem[HL18]{HL18}
Jingjun Han and Zhan Li, \emph{Weak Zariski decomposition and log terminal models for generalized polarized pairs}. arXiv:1806.01234v1, 2018.
\bibitem[HLiu19]{HLiu19}
Jingjun Han and Wenfei Liu, \emph{On a generalized canonical bundle formula for generically finite morphisms}. arXiv:1905.12542v1, 2019.
\bibitem[Har77]{Har77}Robin Hartshorne, \emph{Algebraic geometry.} Graduate Texts in Mathematics, No. 52. Springer-Verlag, New York-Heidelberg, 1977. xvi+496 pp.
\bibitem[Has17]{Has17}Kenta Hashizume, \emph{Non-vanishing theorem for lc pairs admitting a Calabi--Yau pair}. arXiv:1708.01851, 2017.
\bibitem[Has18]{Has18}Kenta Hashizume,  
\emph{On the nonvanishing conjecture and existence of log minimal models}. Publ. Res. Inst. Math. Sci. {\bf 54} (2018), no. 1, 89--104.
\bibitem[Ho80]{Ho80} Yuko Homma,  \emph{Projective normality and the defining equations of ample invertible sheaves on elliptic ruled surfaces with $e\geq 0$.} Natur. Sci. Rep. Ochanomizu Univ. {\bf 31} (1980), no. 2, 61--73.
\bibitem[KMM94]{KMM94} Keel, Sean, Kenji Matsuki, and James McKernan, \emph{Log abundance theorem for threefolds.} Duke Math. J. 75, no. 1 (1994): 99--120.
\bibitem[KM98]{KM98} J\'anos Koll\'ar and Shigefumi Mori, \emph{Birational Geometry of Algebraic Varieties,} Cambridge Tracts in Math., volume {\bf 134}, Cambridge University Press, Cambridge, 1998.
\bibitem[K98]{K98}
Yujiro Kawamata, Subadjunction of log canonical divisors. II. Amer. J. Math. {\bf 120} (1998), no. 5, 893--899.
\bibitem[K00]{K00}
Yujiro Kawamata, \emph{On effective nonvanishing and base-point-freeness.} Kodaira's issue. Asian J. Math. {\bf 4} (2000), no. 1, 173--181.
\bibitem[L04]{L04}
Robert Lazarsfeld, \emph{Positivity in Algebraic Geometry, I.} Ergebnisse der Mathematik und ihrer Grenzgebiete. 3. Folge, volume {\bf 48}, Springer-Verlag, Berlin, 2004.
\bibitem[LP18a]{LP18a}Vladimir Lazi\'{c}, Thomas Peternell, \emph{On generalised abundance, I}, arXiv:1808.00438, 2018.
\bibitem[LP18b]{LP18b}Vladimir Lazi\'{c}, Thomas Peternell, \emph{On generalised abundance, II}, arXiv:1809.02500, 2018.
\bibitem[Na04]{Na04}
Noboru Nakayama,  Zariski-decomposition and abundance. MSJ Memoirs, volume {\bf 14.} Mathematical Society of Japan, Tokyo, 2004. xiv+277 pp.
\bibitem[Se91]{Se91}
Fernando Serrano, \emph{The Picard group of a quasi-bundle.} Manuscripta Math. {\bf 73} (1991), no. 1, 63--82.
\bibitem[Se95]{Se95}
Fernando Serrano, \emph{Strictly nef divisors and Fano threefolds.} {\em J. Reine Angew. Math.}, {\bf 464} (1995), 187--206.

	\bibitem[Sho96]{Sho96} V.V. Shokurov, \emph{3--fold log models}, J. Math. Sciences \textbf{81}, 2677--2699, 1996.
	
\bibitem[Sh00]{Sh00}
V. V. Shokurov, \emph{Complements on surfaces.} Algebraic Geometry, 10. J. Math. Sci. (New York) {\bf 102} (2000), no. 2, 3876--3932.
\bibitem[vDdB18]{vDdB18}
R. van Dobben de Bruyn, \emph{Infinitely small intersections with nef $\Rr$-Cartier divisors.} https://mathoverflow.net/questions/291154/infinitely-small-intersections-with-nef-mathbb-r-cartier-divisors.
\end{thebibliography}
\end{document}